 \def\al{\alpha}
 \def\be{\beta}
 \def\de{\delta}
 \def\eps{\varepsilon}
 \def\deta{{\dot{\eta}}}
 \def\ga{\gamma}
 \def\la{\lambda}
 \def\si{\sigma}
 \def\om{\omega}
 \def\deta{{\dot{\eta}}}
 \def\R{{\mathbb R}}
 \def\N{{\mathbb N}}
 \def\ov{\overline}
\newcommand{\DD}[1]{\mbox{\rm #1}}
\newcommand{\dd}{\DD{d}}
 \def\A{{\mathcal A}}
 \def\cH{{\check{\mathbf H}}}
 \def\cH_i{{\check{ H_i}}}
 \def\cH{{\check{ L_i}}}
 \DeclareMathOperator{\interior}{int}
 \DeclareMathOperator{\co}{co}
 \DeclareMathOperator{\OO}{O}
  \renewcommand{\proofname}{{\bf Proof:}}
 \theoremstyle{plain}
 \newtheorem{Thm}{Theorem}[section]
 \newtheorem{Lemma}[Thm]{\bf Lemma}
 \newtheorem{Corollary}[Thm]{\bf Corollary}
 \newtheorem{Theorem}[Thm]{\bf Theorem}
 \newtheorem{Proposition}[Thm]{\bf Proposition}
 \theoremstyle{definition}
 \theoremstyle{remark}
 \newtheorem{Remark}[Thm]{\bf Remark}
 \newtheoremstyle{Cl}
  {5pt}
  {3pt}
  {\sl}
  {}
  {\it}
  {:}
  {.5em}
  {}
 \theoremstyle{Cl}
 \def\begincproof{
                  \renewcommand{\proofname}{\it Proof:}
                  \begin{proof}
                 }
 \def\endcproof{
                \renewcommand{\qedsymbol}{$\diamondsuit$}
                \end{proof}
                \renewcommand{\qedsymbol}{\openbox}
                \renewcommand{\proofname}{\bf Proof:}
               }
 \renewcommand{\proofname}{{\bf Proof:}}
 \title
 {Singularly perturbed control systems with noncompact fast variable}
\thanks{}
\author[Nguyen]{Thuong Nguyen }
 \address{Department of Mathematics\\ Quy Nhon University\\170
 An Duong Vuong street\\ Quy Nhon city\\Vietnam}
\email{nguyenngocquocthuong@qnu.edu.vn}
 \author[Siconolfi]{Antonio Siconolfi}
\address{Dipartimento di Matematica\\
                 Universit\`a degli Studi di Roma ``La Sapienza''\\
                  00185 Roma\\
                   Italy.}
\email{siconolf@mat.uniroma1.it}
\begin{document}

   \begin{abstract} We deal with  a singularly perturbed optimal control problem with  slow
  and fast variable depending on a parameter $\eps$. We study the asymptotic, as $\eps$ goes to $0$, of
the corresponding value functions, and show  convergence, in the
sense of weak semilimits, to  sub and supersolution of a suitable
limit equation containing  the effective Hamiltonian.

The novelty  of our contribution is that no compactness condition
are assumed on the fast variable. This generalization requires, in
order to perform the asymptotic procedure,  an accurate qualitative
analysis of some auxiliary equations posed on the space of fast
variable. The task is accomplished  using some tools of Weak KAM
theory, and in particular the notion of Aubry set.

\end{abstract}


 \maketitle

\section{Introduction}

  \parskip +3pt

We study a singularly perturbed optimal control problem with a slow
variable, say $x$,  and a fast one, denoted by $y$, with
 dynamics depending on a parameter $\eps$ devoted to become
infinitesimal. We are interested in  the asymptotic, as $\eps$ goes
to $0$, of the corresponding value functions $V^\eps$, depending on
slow, fast variable and time,  in view of proving convergence, in
the sense of weak semilimits, to some functions independent of $y$,
related to a limit control problem where $y$ does not appear any
more, at least as state variable.

More precisely, we exploit that the $V^\eps$ are solutions, in the
viscosity sense, to a time--dependent Hamilton--Jacobi--Bellman
equation of the form
\[ u^\eps_t + H \left (x,  y , D_x u^\eps, \frac{D_y u^\eps} \eps \right ) =  0\]
and show that the upper/lower  weak semilimit   is sub/supersolution
to a limit equation
\[u_t + \ov H(x,Du)=0\]
containing the so--called effective Hamiltonian $\ov H$, obtained
via a canonical procedure we describe below from the Hamiltonian of
the approximating equations. We also show that initial conditions,
i.e. terminal costs,  are transferred, with suitable adaptations, to
the limit. See Theorems \ref{mainuno}, \ref{maindue}, which are the
main results of the paper.

We tackle the subject through a   PDE approach first proposed in
this context by Alvarez--Bardi, see \cite{AB1}, \cite {AB2} and the
survey booklet \cite{AB3},  in turn inspired by techniques developed
in the framework of homogenization of Hamilton--Jacobi equations by
Lions--Papanicolau--Varadhan and Evans, see \cite{LPV}, \cite{E1},
\cite{E2}. The singular perturbation can be actually viewed as a
relative homogenization of slow with respect fast variable. In the
original formulation, homogenization was obtained assuming
periodicity  in the underlying space plus coercivity of the
Hamiltonian in the momentum variable.

Alvarez--Bardi keep periodicity in $y$, but do without coercivity,
and assume instead bounded time controllability in the fast
variable. A condition of this kind is indeed unavoidable, otherwise
it cannot be expected to get rid of $y$ at the limit,  or even to
get any limit. Another noncoercive homogenization problem, arising
from turbulent combustion models, has been   recently investigated
with similar techniques in \cite{XY}.

The novelty of our contribution is that we remove any compactness
condition on the fast variable, and this  requires major adaptations
in perturbed test function method, which is the core of the
asymptotic procedure. We further comment on it later on.

Following a more classical control--theoretic approach, namely
directly working on the trajectory of the dynamics,
Arstein--Gaitsgory, see \cite{AG} and \cite{A1}, \cite{A2}, have
studied a similar model  replacing in a sense periodicity  by a
coercivity condition  in the cost, and allowing $y$ to vary in the
whole of $\R^M$, for some dimension $M$. Beside proving convergence,
they also provide a thorough description of the limit control
problem, in terms of occupational measures,  see \cite{A2}. This is
clearly a relevant aspect of the topic, but we do not treat it here.

Our aim is to recover their  results adapting Alvarez--Bardi
techniques. We assume, as in \cite{AG} and \cite{A1}, coercivity of
running cost, see {\bf (H4)}, and  a controllability condition, see
{\bf (H3)}, stronger than the one used in \cite{AB1}, \cite {AB2},
\cite{AB3} and implying, see Lemma \ref{astima}, coercivity of the
corresponding  Hamiltonian, at least in the fast variable. We do
believe that our methods can also work under bounded time
controllability, and so without any coercivity on $H$, but this
requires more work, and  the details have still to be fully checked
and written down.

The focus of  our analysis is on the associate cell problem, namely
 the one--parameter family of
stationary equations, posed in the space of fast variable, obtained
by freezing in $H$ slow variable and momentum, say at a value
$(x_0,p_0)$. Its role, at least in the periodic case, is twofold: it
provides a definition of the effective Hamiltonian $\ov H$ at
$(x_0,p_0)$ as the minimum value of the parameter for which there is
a subsolution (then also supersolutions or solutions do exist),  the
corresponding equation will be  called critical in what follows, and
critical sub/supersolutions   play the crucial role of correctors in
the perturbed test function method.

The absence of compactness calls into questions  the very status of
the critical value $\ov H(x_0,p_0)$ since, in contrast to what
happens when periodicity is assumed,  the existence of solutions
does not characterize any more the critical  equation, see Appendix
\ref{appendice}. Moreover  critical sub/supersolutions must enjoy
suitable additional properties, as explained below, to be effective
in the asymptotic procedure.

The two issues are intertwined. By performing a rather accurate
qualitative analysis of the cell problems, we show that (sub/super)
solutions usable  as correctors can be obtained only for the
critical equation. We make essentially use for that  of  tools
issued from weak KAM theory, and in particular of the capital notion
of Aubry set. As far as we know, it is the first time that this
methodology finds a specific application in singular perturbation or
homogenization problems.

The geometric counterpart of coercivity in the cost functional is
that the critical equation  has a nonempty compact Aubry set for
every fixed $(x_0,p_0)$,  see Lemma \ref{lemHkdue}, which in turn
implies existence of coercive  solutions possessing a simple
representation formula in  terms of a related intrinsic metric,
 and bounded subsolutions as well, see Propositions \ref{Hkuno}, \ref{Hkdue}. Coercive
solutions, up to  modification  depending on $\eps$ (see Subsection
\ref{construct}), are used in the upper semilimit part of the
asymptotic, which is the most demanding point of the analysis.

The paper is organized as follows. In Section 2 we give some
preliminary material and standing assumptions, we then study some
relevant property of controlled dynamics and how they affect value
functions. Approximating Hamilton--Jacobi--Bellman equations and
limit problem are also defined. Section 3 is about cell problems and
construction of distinguished critical sub/supersolutions  to be
used as correctors. Sections 4 contain the main results. The
appendix is devoted to review some basic facts of metric approach
and Weak KAM theory for general Hamilton--Jacobi equations.

  \bigskip

\section{Setting of the problem}\label{setting}

\parskip +3pt

  \subsection{Notations and terminology} \label{notations}
Given an Euclidean space, say to fix ideas $\R^N$, for some $N \in
\N$, $x \in \R^N$ and $R >0$ we denote by
 $B(x,R)$ the open ball centered at $x$ with radius $R$. Given $B \subset \R^N$, we indicate
  by $\ov B$, $\interior B$, its closure and interior, respectively.
  Given subsets $B$, $C$, and a scalar $\la$, we set
  \begin{eqnarray*}
    B + C  &=& \{x+y \mid x \in B, \, y \in C\} \\
    \la \, B &=& \{ \la \, x \mid x \in B\}.
  \end{eqnarray*}

\smallskip

We make precise that in all Hamilton--Jacobi equations we will
consider throughout  the paper the term (sub/super) solution must be
understood in the viscosity sense.

Given an upper semicontinuous (resp. lower semicontinuous) $u: \R^N
\to \R$, we say that a function $\psi$ is supertangent (resp.
subtangent) to a $u$ at some point $x_0$ if it is of class $C^1$,
$u=v$ at $x_0$  and \[ \psi \geq u \;\; \hbox{(resp. $\psi \leq u$),
\quad locally at $x_0$.}\] If strict inequalities hold in the above
formula then $\psi$ will be called strict supertangent (resp.
subtangent).

Given a sequence of locally equibounded functions $u_n: \R^M \to
\R$, the upper weak semilimit (resp lower weak semilimit) is defined
via the formula
\begin{eqnarray*}
({\limsup}^{\#} u_n)(x)&=&\sup\{\limsup_n u_n(x_n) \mid x_n\to x\}\\
(\hbox{resp.} \quad ({\liminf}_{\#} u_n)(x)&=& \inf\{\liminf_n
u_n(x_n) \mid x_n\rightarrow x\}).
\end{eqnarray*}
If $u$ is a locally bounded function and we take in the above
formula the sequence $u_n$ constantly equal to $u$ then we get
through upper (resp. lower) weak semilimit the upper (resp. lower)
semicontinuous envelope of $u$, denoted by $u^\#$ (resp. $u_\#$).
It is minimal (resp. maximal) upper (resp. lower) semicontinuous
function greater (resp. less) than or equal to $u$.

\medskip

\subsection{Assumptions} We assume that the slow variable, usually denoted by $x$,
lives in $\R^N$ and the fast variable $y$ in $\R^M$, for given
positive integers $N$, $M$. We denote by $A$ the control set, by
$f:\R^N \times \R^M \times A \to \R^N$, $g:\R^N \times \R^M \times A
\to \R^M$ the controlled vector fields related to slow and fast
dynamics, respectively. We also have a running cost $\ell: \R^N
\times \R^M \times A \to \R$ and a terminal cost $u_0: \R^N \times
\R^M \to \R$.  We call, as usual, control  a measurable trajectory
defined in $[0,+ \infty)$ taking values in $A$.  We require:

\smallskip

\begin{itemize}
    \item[{\bf(H1)}] \underline{Control set}: $A$ is  a compact subset of some Euclidean
    space;
    \smallskip
    \item[{\bf(H2)}]   \underline{Controlled dynamics}: There is a constant   $L_0 > 0$ with
    \begin{eqnarray*}
     |f(x_1,y_1,a) -f(x_2,y_2,a)| &\leq& L_0 \, (|x_1-x_2| +
    |y_1-y_2|) \\
      |g(x_1,y_1,a) -g(x_2,y_2,a)| &\leq& L_0 \, (|x_1-x_2| +
    |y_1-y_2|)
    \end{eqnarray*}
 for any $(x_i,y_i)$, $i=1,2$ in $\R^N \times \R^M$ and $a \in A$;
 we assume in addition that $|f|$ is bounded with upper bound
 denoted by $Q_0$;
    \item[{\bf(H3)}] \underline{Total controllability}:
For any compact set $K \subset \R^N \times \R^M$ there exists
$r=r(K) >0$ such that
\[B(0,r) \subset \ov\co \, g(x,y,A) \qquad\hbox{for $(x,y) \in K$,}\]
where $g(x,y,A)= \{g(x,y,a) \mid a \in A\}$;
\smallskip
\item[{\bf(H4)}] \underline{Running cost}:
 $\ell$ is continuous in
$\R^N \times \R^M \times A$, and for any compact set  $B \subset
\R^N $
\begin{equation}\label{coe}
    \lim_{|y| \to + \infty} \,  \min_{(x,a)\in B \times A}  \ell(x,y,a) = +
    \infty;
\end{equation}
    \item[{\bf(H5)}] \underline{Terminal cost}: $u_0$ is  continuous and bounded from below in
     $\R^N \times \R^M$. To simplify notations, $-Q_0$, see {\bf (H2}), is also taken
as lower bound of $u_0$ in $\R^N \times \R^M$.
\end{itemize}

\medskip

Taking into account Assumption {\bf(H5)},  we define

\begin{equation}\label{initial}
    \ov u_0(x) = \inf_{y \in \R^M} u_0(x,y) \qquad\hbox{for any $x
    \in \R^N$.}
\end{equation}

This function is apparently  upper semicontinuous,  and will play
the role of initial condition in the limit  equation we get in the
asymptotic procedure.

\medskip
\begin{Remark}\label{remassu} Due to Relaxation Theorem plus Filippov Implicit Function Lemma,
see for instance \cite{AC}, \cite{C},
the integral trajectories of the differential inclusion
\[\dot \zeta \in \ov\co \, g(x, \zeta, A) \qquad\hbox{for  $x$ fixed in $\R^N$,}\]
are locally  uniformly approximated in time by solutions to
\begin{equation}\label{remassu1}
   \dot\eta=g(x,\eta,\al) \qquad\hbox{for some control $\al$.}
\end{equation}
By iteratively applying this property to a concatenation of a sequence of curves of \eqref{remassu1}
for infinitesimal times,
we derive   local bounded time controllability for fast dynamics, namely,
 given $R_1$, $R_2$ positive, there is $T_0=T_0(R_1,R_2)$ such that for
 any   $y_1$, $y_2$  in  $ B(0,R_1)$,  $x \in  B(0,R_2)$,  we can find  a trajectory $\eta$ of
 \eqref{remassu1} joining $y_1$ to $y_2$ in a time $T \leq T_0$.

\end{Remark}

\medskip

\subsection{Controlled dynamics} \label{subdyna}

For any $\eps >0$, any control $\al$, the controlled dynamics  is
defined as

 \begin{equation} \label{dynabis}  \tag{$CD_\eps$}
\left \{
\begin{array}{cc}
           \dot {\xi}(t) =&  \eps \, f(\xi(t),\eta(t),\alpha(t)) \\
           \dot {\eta}(t)  =& g(\xi(t),\eta(t),\alpha(t)) \\
        \end{array} \right .
\end{equation}

\medskip

Notice that if $\xi$, $\eta$ are solutions to \eqref{dynabis} with
initial data  $(x,y)$ then the trajectories
\[ t \mapsto \xi(  t/\eps), \qquad t \mapsto \eta(
t /\eps)\] are solutions to
\begin{equation} \label{dyna} \tag{$\ov{CD}_\eps$}
\left \{
\begin{array}{cc}
           \dot \xi_0(t) =& f(\xi_0(t),\eta_0(t),\alpha(t/\eps)) \\
          \eps \, \dot \eta_0(t)  =& g(\xi_0(t),\eta_0(t),\alpha(t/\eps)) \\
        \end{array} \right .
\end{equation}
with the same initial data.

\smallskip

Given a trajectory $\xi,\eta$ of \eqref{dynabis} with initial data
$(x,y)$ and control $\al$, for some $\eps
>0$, and $T >0$, we deduce from standing assumptions and
Gr\"{o}nwall Lemma, the following basic estimates:
\begin{equation}\label{est1}
    |\xi(t)-x| \leq  Q_0\, T \qquad\hbox{for $t \in [0,T/\eps]$.}
\end{equation}
If $\zeta$ satisfies
\[\dot\zeta = g(x,\zeta,\al) \quad \zeta(0)=y,\]
then
\begin{eqnarray}
  |\eta(T)-\zeta(T)| &\leq&   \int_0^{T} |g(\xi,\eta,\al)
  - g(x,\zeta,\al)| \, \dd s \label{est2}
   \\ &\leq & L_0 \,  \int_0^{T} \big ( |\xi-x| +
   |\eta-\zeta| \big ) \, \dd s
    \leq L_0 \, \eps \, Q_0 \, T^2 \, e^{L_0
   T}. \nonumber
\end{eqnarray}
Finally
\begin{eqnarray}
  |\eta(T)-y| &\leq&   \int_0^{T} |g(\xi,\eta,\al)
  - g(\xi,y,\al)| \, \dd s  + \int_0^{T} |g(\xi,y,\al)| \, \dd s \label{est3}
   \\ &\leq&  L_0 \, R \ T\, e^{L_0
   T}, \nonumber
\end{eqnarray}
where $R$ is an upper bound of $|g|$ in $B(x, \eps \, T) \times
\{y\} \times A$, and similarly
\begin{eqnarray}
  |\eta(T)-y| &\leq&   \int_0^{T} |g(\xi,\eta,\al)
  - g(\xi,\eta(T),\al)| \, \dd s  + \int_0^{T} |g(\xi,\eta(T),\al)| \, \dd s \label{est4}
   \\ &\leq&  L_0 \, R' \ T\, e^{L_0
   T}, \nonumber
\end{eqnarray}
where $R'$ is an upper bound of $|g|$ in $B(x, \eps \, T) \times
\{\eta(T)\} \times A$.

\medskip

By using bounded time controllability condition, we further get:

\smallskip

\begin{Lemma}\label{prestimo} Given   $R_1$,
$R_2$ positive , $x \in B(0,R_1)$, $y$, $z$ in  $B(0,R_2)$, there
is, for any $\eps$, a trajectory $(\xi_\eps,\eta_\eps)$ of
\eqref{dynabis}, starting at $(x,y)$ and a time $T_\eps$ with
\begin{equation}\label{estimo1}
    T_0(R_1,R_2) < T_\eps < 3 \, T_0(R_1,R_2)
\end{equation}
 such that
 \[|\eta_\eps(T_\eps)- z| = \OO (\eps).\]
 The quantity $T_0(\cdot, \cdot)$  is as in Remark \ref{remassu}.
\end{Lemma}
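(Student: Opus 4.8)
The plan is to build the trajectory $(\xi_\eps,\eta_\eps)$ as a concatenation of two controlled pieces, exploiting the approximate controllability from Remark \ref{remassu} applied to the frozen dynamics $\dot\zeta=g(x,\zeta,\al)$, and then comparing those frozen-dynamics trajectories with genuine solutions of \eqref{dynabis} via the estimates \eqref{est1}--\eqref{est4}. Fix $R_1$, $R_2$ and set $T_0=T_0(R_1,R_2)$. The first step is to choose, using Remark \ref{remassu}, a control $\al$ and a trajectory $\zeta$ of $\dot\zeta=g(x,\zeta,\al)$, $\zeta(0)=y$, joining $y$ to $z$ in some time $S\le T_0$. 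Running this same control in \eqref{dynabis} from $(x,y)$ produces $(\xi_\eps,\eta_\eps)$; by \eqref{est1} the slow variable stays within $Q_0\,S\le Q_0\,T_0$ of $x$, and by \eqref{est2} (with $T=S$) we get $|\eta_\eps(S)-\zeta(S)|=|\eta_\eps(S)-z|\le L_0\,Q_0\,S^2\,e^{L_0 S}=\OO(\eps)$. So after time $S$ we have already arrived $\OO(\eps)$-close to $z$; the only issue is that $S$ may be smaller than $T_0$, violating the lower bound in \eqref{estimo1}.

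To fix the lower bound while respecting the upper bound $3T_0$, the second step is to append a "loop": apply Remark \ref{remassu} again to find a control steering the frozen dynamics, now started near $z$, from its current point back to $z$, but over a prescribed time in the window $[T_0,2T_0]$ — this is possible because bounded-time controllability lets us join any two nearby points of $B(0,R_2')$ (a slightly enlarged ball absorbing the $\OO(\eps)$ discrepancy and the $Q_0T_0$ drift in $x$) in time at most $T_0$, and one can pad to reach any longer time up to $2T_0$ by, e.g., a further back-and-forth. Concatenating, the total time is $T_\eps=S+(\text{loop time})$, which lies strictly between $T_0$ and $3T_0$ for $\eps$ small; concatenating the controls gives an admissible control for \eqref{dynabis}, and re-running the comparison estimate \eqref{est2} over the whole interval $[0,T_\eps]$ (total length bounded by $3T_0$, independent of $\eps$) yields $|\eta_\eps(T_\eps)-z|\le L_0\,Q_0\,(3T_0)^2\,e^{3L_0T_0}\,\eps=\OO(\eps)$.

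A cleaner variant, which I would actually write, is to run the comparison just once: pick a frozen-dynamics trajectory $\zeta$ of $\dot\zeta=g(x,\zeta,\al)$ that starts at $y$, reaches $z$, loiters appropriately, and ends at $z$ at a time $T_\eps\in(T_0,3T_0)$ — such a $\zeta$ exists by iterating Remark \ref{remassu} since all the relevant points stay in a fixed compact set $K$ (built from $R_1$, $R_2$ and the $\OO(1)$ bounds $Q_0T_0$). Then $(\xi_\eps,\eta_\eps)$ is the solution of \eqref{dynabis} with that same control $\al$ from $(x,y)$, and a single application of \eqref{est2} on $[0,T_\eps]$ gives $|\eta_\eps(T_\eps)-z|=|\eta_\eps(T_\eps)-\zeta(T_\eps)|=\OO(\eps)$.

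The main obstacle is the bookkeeping to keep the whole construction inside one fixed compact set $K$ independent of $\eps$, so that the constants $L_0$, $Q_0$, $T_0(\cdot,\cdot)$ and the upper bounds on $|g|$ used in \eqref{est2}--\eqref{est4} do not degenerate as $\eps\to 0$. Since the slow drift is $\le Q_0T_0=\OO(1)$ and the fast variable along $\zeta$ ranges over a compact set determined only by $R_1$, $R_2$ and $T_0$, this is routine but must be set up carefully before invoking the controllability constant $T_0$ for the enlarged radii. Everything else — the choice of loop time to land in $(T_0,3T_0)$, and the final Gr\"onwall-type estimate — is immediate from the inequalities already established.
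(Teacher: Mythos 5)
Your proposal is correct and, in its ``cleaner variant,'' is essentially identical to the paper's own proof: choose a frozen-dynamics trajectory $\dot\zeta=g(x,\zeta,\al)$ from $y$ to $z$, extend it by a cycle based at $z$ so that the total time lies in $(T_0,3T_0)$, run the same control in \eqref{dynabis} from $(x,y)$, and conclude by a single application of \eqref{est2}. The two-step concatenation you describe first, and the compactness bookkeeping, are just minor elaborations of the same argument.
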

\begin{proof} By controllability condition, see Remark \ref{remassu},
there is a control $\al$ and a
trajectory $\zeta$ with
\begin{equation}\label{estimo2}
    \dot\zeta = g(x,\zeta,\al) \qquad\hbox{for a suitable $\al$}
\end{equation}
starting at $y$ and reaching $z$ in a time $T_\eps \leq
T_0(R_1,R_2)$. Up to adding a cycle based on $z$ and satisfying
\eqref{estimo2} for some control, we can assume $T_\eps$ to satisfy
\eqref{estimo1}. Note that such a cycle does exist again  in force
of the controllability condition. We then take, for any $\eps$, the
trajectories $(\xi_\eps,\eta_\eps)$ of \eqref{dynabis} starting at
$(x,y)$ corresponding to the same control $\al$, and invoke
\eqref{est2} to get the assertion.
\end{proof}

We derive:

\smallskip

\begin{Proposition}\label{straestimo} Given a bounded set  $B$ of  $\R^N \times
\R^M$ and $S >0$, there exists a bounded subset $B_0 \supset B$ such
that for any initial data in $B$ and any $\eps$, we can find
 a trajectory of \eqref{dynabis} lying in $B_0$ as $ t \in
[0,S/ \eps]$.

\end{Proposition}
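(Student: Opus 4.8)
The plan is to combine the uniform controllability estimate of Lemma~\ref{prestimo} with the slow-variable bound \eqref{est1} to steer, in finitely many controllability steps, any trajectory started in $B$ back into a fixed large ball, and then iterate this over the whole time interval $[0,S/\eps]$. First I would fix $R_1,R_2>0$ so that $B\subset B(0,R_1)\times B(0,R_2)$, and set $T_0=T_0(R_1,R_2+1)$ (the extra constant absorbing the $\OO(\eps)$ error in Lemma~\ref{prestimo}, for $\eps$ small). Over any time window of length $3T_0/\eps$ the slow component moves by at most $3Q_0T_0$ by \eqref{est1}, so it stays in $B(0,R_1+3Q_0T_0)=:B(0,R_1')$ for all the times we care about once we also note that the total elapsed slow-displacement over $[0,S/\eps]$ is at most $Q_0 S$; hence I will in fact work with $R_1'':=R_1+Q_0S$ throughout, which bounds the slow variable unconditionally on $[0,S/\eps]$.

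The core construction: partition $[0,S/\eps]$ into consecutive blocks of length roughly $3T_0/\eps$ (the last one possibly shorter). On each block I apply Lemma~\ref{prestimo} with the current position $(\xi(t_k),\eta(t_k))$ as initial data — this is legitimate provided $|\eta(t_k)|\le R_2+1$, say — choosing the target $z=0$ (or any fixed point), to obtain on $[t_k,t_{k+1}]$, $t_{k+1}-t_k=T_\eps\in(T_0/\eps,3T_0/\eps)$ after rescaling, a trajectory of \eqref{dynabis} with $|\eta(t_{k+1})|=\OO(\eps)\le 1$ for $\eps$ small. During that block, by \eqref{est3} (or \eqref{est4}) applied on the time interval of length $T_\eps\le 3T_0/\eps$, the fast component satisfies $|\eta(t)-\eta(t_k)|\le L_0\,R\,3T_0\,e^{3L_0T_0}$, where $R$ bounds $|g|$ on $B(0,R_1''+S)\times B(0,R_2+1)\times A$ — a fixed constant, call it $\rho$, independent of $\eps$ and $k$. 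So within the block $|\eta(t)|\le (R_2+1)+\rho$. At the end of the block $|\eta(t_{k+1})|\le 1$, so the hypothesis for the next block is restored, and one continues; the number of blocks is at most $\lceil S/T_0\rceil$, finite and $\eps$-independent.

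Concatenating these finitely many sub-trajectories yields a single trajectory of \eqref{dynabis} on $[0,S/\eps]$ starting at the given point of $B$, whose slow component lies in $B(0,R_1'')$ and whose fast component lies in $B(0,R_2+1+\rho)$ for all $t\in[0,S/\eps]$. Thus $B_0:=\ov B\big(0,R_1''\big)\times\ov B\big(0,R_2+1+\rho\big)$, which depends only on $B$ and $S$ (through $R_1,R_2,Q_0,S,L_0,T_0$ and the local bound $\rho$ on $|g|$), contains such a trajectory for every initial datum in $B$ and every small $\eps$; enlarging $B_0$ if necessary (e.g. by a compactness/continuity argument to cover the finitely many moderate values of $\eps$ not yet handled, or simply by taking $\eps_0$ small in Lemma~\ref{prestimo} and treating $\eps\ge\eps_0$ directly with the same estimates since all bounds are monotone in $T$) gives the claim for all $\eps$.

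The main obstacle I anticipate is the bookkeeping at the block interfaces: one must verify that the $\OO(\eps)$ in Lemma~\ref{prestimo} is genuinely uniform over all admissible initial data $(\xi(t_k),\eta(t_k))$ lying in the fixed compact $B(0,R_1'')\times B(0,R_2+1)$ — which it is, since the constant in \eqref{est2} depends only on $L_0,Q_0$ and the time horizon $3T_0$ — and that the per-block excursion constant $\rho$ can be chosen once and for all before the induction starts, rather than growing with the number of blocks. Since the slow variable's total displacement over the \emph{entire} interval $[0,S/\eps]$ is bounded a priori by $Q_0S$ (independently of the control), this circularity is harmless: one fixes $R_1''=R_1+Q_0S$ first, reads off $\rho$ from the corresponding compact set, and only then runs the block construction.
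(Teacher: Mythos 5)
Your overall strategy is exactly the paper's: bound the slow component once and for all by \eqref{est1}, and control the fast component by iterating Lemma \ref{prestimo} with target $z=0$, using the Gr\"onwall estimate \eqref{est3} inside each controllability step. However, as written there is a concrete time--scale error. In Lemma \ref{prestimo} the time $T_\eps$ already refers to the clock of \eqref{dynabis} and satisfies $T_0<T_\eps<3T_0$ by \eqref{estimo1}; there is no further rescaling by $1/\eps$. Consequently the controllability blocks have length at most $3T_0$, not $3T_0/\eps$, and the number of blocks needed to cover $[0,S/\eps]$ is of order $S/(\eps T_0)$, not $\lceil S/T_0\rceil$; in particular it is not $\eps$--independent. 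With your block length the key step fails: \eqref{est3} applied on an interval of length $3T_0/\eps$ yields the bound $L_0\,R\,(3T_0/\eps)\,e^{3L_0T_0/\eps}$, which blows up as $\eps\to 0$, not the fixed constant $L_0\,R\,3T_0\,e^{3L_0T_0}$ you quote; moreover Lemma \ref{prestimo} does not even provide a trajectory on an interval of that length, only up to the time $T_\eps\le 3T_0$.

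The repair is immediate and brings you back to the paper's proof: take the blocks to be exactly the intervals $[t_k,t_{k+1}]$ with $t_{k+1}-t_k=T_\eps\in(T_0,3T_0)$ furnished by the lemma. Then your per--block excursion constant $\rho$ (read off \eqref{est3} with $T\le 3T_0$ on the compact $B(0,R_1+Q_0S)\times B(0,R_2+1)$, fixed before the iteration starts, as you describe) is correct, and the fact that the number of blocks grows like $1/\eps$ is harmless because nothing accumulates: at the end of each block the fast variable is within $\OO(\eps)$ of $0$, so every block restarts from $B(0,1)$. This is precisely the paper's construction of the times $t_i$, spaced at most $3T_0$ apart with $\eta(t_i)\in B(0,R_2)$, leading to the bounds $|\xi(t)-x|<Q_0S$ and $|\eta(t)|\le R_2+3PT_0$. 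One last bookkeeping point: the controllability time should be $T_0(R_1+Q_0S,\,R_2+1)$ rather than $T_0(R_1,R_2+1)$, since the slow component of the initial datum of the later blocks ranges over the enlarged ball; your device of fixing $R_1''$ before reading off the constants already accommodates this.
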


\begin{proof} We fix $(x,y) \in B$. By \eqref{est1}, we can find
$R_1$, $R_2$  such that  $B \subset B(0,R_1) \times B(0,R_2)$, and
the first component $\xi$ of any trajectory $(\xi,\eta)$ of
\eqref{dynabis}, for any $\eps$, starting at $(x,y)$ is contained in
$B(0,R_1)$. We write $T_0$ for $T_0(R_1,R_2)$. Clearly, it is enough
to establish the assertion for $\eps$ small.

By applying Lemma \ref{prestimo} with $\eps$ suitably small and
$z=0$, we find a time $T_\eps$ and a trajectory
$(\xi_\eps,\eta_\eps)$ of \eqref{dynabis} such that
$(\xi_\eps(T_\eps),\eta_\eps(T_\eps)) \in B(0,R_1) \times B(0,R_2)$.
Taking into account  that the time $T_\eps$ is estimated from above
and below by a positive quantity, see \eqref{estimo1}, we can
iterate the procedure and get by concatenation of the curves so
obtained, a trajectory $(\xi_0,\eta_0)$  in $[0,t_0/\eps]$, starting
at $(x,y)$, with the crucial property that there are times
$\{t_i\}$, $i=1, \cdots k$, for some index $k$, in $[0,S/\eps]$ such
that
\begin{enumerate}
    \item[] for any $t \in[0,S/\eps]$, there is $t_i$ with $|t- t_i| \leq 3 \, T_0$;
    \item[] $\eta_\eps(t_i) \in B(0,R_2)$ for any $i$.
\end{enumerate}
We derive as $t \in \left [ 0, \frac S\eps \right ]$
\begin{eqnarray}
  |\xi_\eps(t) - x_0| &< & Q_0 \, S \label{estimo5} \\
  |\eta_\eps(t)| &\leq& R_2 + 3 \, P \, T_0
   \label{estimo4}
\end{eqnarray}
with  constant $P$ solely depending, see \eqref{est3},  upon $R_1$,
$R_2$, $T_0(R_1,R_2)$. This proves the assertion.

\end{proof}

\medskip

The next result is a strengthened version of  Lemma \ref{prestimo}
stating that the approximation of a  value of the fast variable by a
trajectory of the fast dynamics can be realized in any predetermined
suitably large time. To establish it, we need exploiting total
controllability assumption {\bf (H3)} in its full extent.  The lemma
will be used in the proof of Theorem \ref{maindue}.

\smallskip

\begin{Lemma}\label{prestimobis}
Given   $x \in \R^N$, $y$, $z$ in $\R^M$, and $S >0$ suitably large,
there is, for any $\eps$, a trajectory $(\ov\xi_\eps,\ov\eta_\eps)$
of \eqref{dynabis}, starting at $(x,y)$
 such that
 \[|\ov\eta_\eps(S)- z| = \OO (\eps).\]
\end{Lemma}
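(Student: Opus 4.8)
The plan is to combine the two building blocks already at hand: Lemma \ref{prestimo}, which lets us steer the fast variable near any target point $z$ in a time window $(T_0, 3T_0)$ up to an $\OO(\eps)$ error, and the remark that total controllability {\bf (H3)} furnishes, for $x$ fixed, a cycle of the $x$-frozen dynamics $\dot\zeta = g(x,\zeta,\al)$ based at $z$ of arbitrarily short period $\tau$ (one uses $B(0,r)\subset\ov\co\, g(x,z,A)$ together with the Relaxation/Filippov approximation of Remark \ref{remassu} to produce genuine closed trajectories of \eqref{remassu1} near $z$ with period as small as we like, say period $\le \tau_0$ for some $\tau_0 < T_0$). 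First I would fix $R_1, R_2$ with $x\in B(0,R_1)$ and $y,z\in B(0,R_2)$, and enlarge $R_2$ if necessary so that all the short cycles based at $z$ stay inside $B(0,R_2+1)$; write $T_0 = T_0(R_1, R_2)$. For $S$ suitably large (it suffices $S > 3T_0$), I would first apply Lemma \ref{prestimo} to obtain a control $\al_1$ and a time $T_\eps^{(1)}\in(T_0, 3T_0)$ with the $x$-frozen trajectory $\zeta$ reaching $z$ at time $T_\eps^{(1)}$. Then I would append to $\al_1$ a concatenation of the short cycles based at $z$ so as to fill up the remaining time exactly: since the residual time $S - T_\eps^{(1)}$ is nonnegative (for $S$ large) and we can choose cycles of any period $\le\tau_0$ with $\tau_0$ as small as we wish, we can land on the value $S$ precisely — first use as many full cycles of a fixed small period as fit, then adjust with one cycle of a shorter, tailored period to hit $S$ on the nose. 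The resulting $x$-frozen trajectory $\zeta$ satisfies $\zeta(S) = z$ and stays in a fixed bounded set.

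Next I would transfer this to the $\eps$-dependent dynamics \eqref{dynabis}: let $(\ov\xi_\eps, \ov\eta_\eps)$ be the solution of \eqref{dynabis} starting at $(x,y)$ with the \emph{same} control $\al$ just constructed. Estimate \eqref{est1} keeps $\ov\xi_\eps$ in $B(0,R_1)$, so the comparison estimate \eqref{est2} applies on the interval $[0,S]$ and gives
\[
|\ov\eta_\eps(S) - \zeta(S)| \le L_0\, \eps\, Q_0\, S^2\, e^{L_0 S} = \OO(\eps),
\]
with the implied constant depending only on $L_0, Q_0, S$ (and on $S$ being now a fixed quantity). Since $\zeta(S) = z$, this yields $|\ov\eta_\eps(S) - z| = \OO(\eps)$, which is the assertion.

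The one point requiring care — the main obstacle — is the exact time-matching: Lemma \ref{prestimo} only controls the arrival time to within the window $(T_0, 3T_0)$, so I cannot prescribe it, and I must absorb the slack by padding with cycles while landing on $S$ exactly. This is where the full strength of {\bf (H3)} enters, as the statement advertises: the containment $B(0,r)\subset\ov\co\, g(x,y,A)$ at \emph{every} point (not merely bounded-time controllability) is what guarantees short cycles of \emph{continuously adjustable} period based at $z$, so that the total elapsed time can be tuned to equal $S$ with no error at all. One must also check that all these appended cycles, and the comparison trajectory $\zeta$ over the now-longer interval $[0,S]$, remain in a bounded set independent of $\eps$ — this follows as in Proposition \ref{straestimo} from \eqref{est1} and \eqref{est3} — so that Lipschitz bound {\bf (H2)} and estimate \eqref{est2} are legitimately invoked on $[0,S]$.
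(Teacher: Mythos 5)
There is a genuine gap, and it sits exactly at the point you flag as ``the one point requiring care.'' Your argument needs, for the frozen dynamics $\dot\zeta=g(x,\zeta,\al)$, genuine closed trajectories based \emph{exactly} at $z$ whose period can be tuned \emph{continuously} (arbitrarily small values, and any prescribed residual time hit ``on the nose''). Neither {\bf (H3)} nor anything established in the paper yields this. The Relaxation Theorem of Remark \ref{remassu} only produces trajectories of \eqref{remassu1} that \emph{approximate} relaxed loops, so they are near-closed rather than closed; and the exact steering asserted in Remark \ref{remassu} (and used in Lemma \ref{prestimo}) is exact in the endpoint but only in \emph{some} bounded time $T\le T_0$ -- it gives no control on the precise value of $T$, which is precisely why Lemma \ref{prestimo} can only place $T_\eps$ in the window \eqref{estimo1} rather than prescribe it. So the assertion that the set of periods of exact cycles at $z$ fills an interval down to $0$ is an unproved claim, not obviously true under {\bf (H1)}--{\bf (H3)}, and your whole time-matching construction (hence the identity $\zeta(S)=z$ on which the final application of \eqref{est2} rests) depends on it. The rest of your scheme (transferring a frozen trajectory to \eqref{dynabis} with the same control and applying \eqref{est2} once over the fixed interval $[0,S]$, plus the boundedness check) is fine.

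The paper's proof avoids the need for exact timing altogether, and this is the idea you are missing: since the conclusion only asks for an $\OO(\eps)$ error at time $S$, one does not need to \emph{arrive} at $z$ at time $S$, it suffices to arrive $\OO(\eps)$-close to $z$ at some time $S_\eps$ with $S-S_\eps$ bounded (by iterating Lemma \ref{prestimo}), and then to \emph{hover}: because $0\in B(0,r)\subset\ov\co\,g$, the constant curve is a relaxed trajectory, and the Relaxation Theorem furnishes a genuine control $\be$ whose frozen trajectory stays within $\OO(\eps)$ of $\eta_\eps(S_\eps)$ on $[0,S-S_\eps]$; transferring via \eqref{est2} and concatenating gives a trajectory of \eqref{dynabis} defined up to time exactly $S$ with the required $\OO(\eps)$ accuracy. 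In other words, the full strength of {\bf (H3)} is used to stand still approximately for an arbitrary prescribed duration, not to manufacture cycles of adjustable period. If you wish to salvage your route, you would have to either prove the tunable-period cycle claim (a nontrivial exact-controllability-in-prescribed-time statement) or replace the exact padding by this approximate, relaxation-based padding, at which point you recover the paper's argument.
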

\begin{proof}
We fix $R_1$, $R_2$ such that $x \in B(0,R_1)$, and $y$, $z$ are in
$B(0,R_2)$. We take $S$ with $S > 3 \, T_0(R_1,R_2)$. By applying
Lemma \ref{prestimo}, we find $T_\eps < 3 \, T_0(R_1,R_2) < S$ and,
for any $\eps$,  a curve $(\xi_\eps,\eta_\eps)$ of \eqref{dynabis}
starting at $(x,y)$ with
\[|\eta_\eps(T_\eps) - z| = \OO( \eps).\]
By  iterating the procedure, if necessary, as in the proof of  Lemma
\ref{prestimo}, we can extend it to an interval $[0,S_\eps]$, with
$S - S_\eps <T_\eps$, still getting
\begin{equation}\label{prestimobis1}
    |\eta_\eps(S_\eps) - z| = \OO( \eps).
\end{equation}
By {\bf (H3)} and Relaxation Theorem, see Remark \ref{remassu}, we
find a control $\be$ and a trajectory $\zeta_\eps$ satisfying
\[ \dot\zeta_\eps=g(\xi_\eps(S_\eps),\zeta_\eps,\be) \quad \zeta_\eps(0)=\eta_\eps(S_\eps)\]
with
\begin{equation}\label{prestimobis2}
    |\zeta_\eps(t) - \eta_\eps(S_\eps)| = \OO(\eps) \qquad\hbox{for $t
\in [0,S-S_\eps]$.}
\end{equation}
 Owing to \eqref{est2}, the trajectory
$(\xi^0_\eps,\eta^0_\eps)$ of \eqref{dynabis} starting at
$(\xi_\eps(S_\eps), \eta_\eps(S_\eps))$, with control $\be$
satisfies
\begin{equation}\label{prestimobis3}
    |\eta_\eps^0(S-S_\eps) - \zeta_\eps(S-S_\eps)| = \OO(\eps).
\end{equation}
By concatenation of $\eta_\eps$ and $\eta^0_\eps$, we finally get,
in force of \eqref{prestimobis1}, \eqref{prestimobis2},
\eqref{prestimobis3}, a trajectory satisfying the assertion.
\end{proof}

\medskip

\subsection{Minimization problems and value functions}
\label{subvalue} \; We consider for any $(x,y) \in \R^N \times
\R^M$, $t
>0$, $\eps >0$, the optimization problems

 \begin{equation} \label{minprob}
      \inf_\alpha \eps \,  \int_0^{\frac t \eps}\ell \big (\xi_\eps,
      \eta_\eps, \alpha \big )\, \dd s +
      u_0  \left (\xi_\eps \left (\frac t\eps \right ),
      \eta_\eps \left (\frac t\eps \right ) \right )
\end{equation}
with $\xi_\eps$, $\eta_\eps$ are solutions to \eqref{dynabis} in
$[0,+ \infty)$,  issued from the  initial datum $(x,y)$. Or
equivalently with the change of variables $r = \eps \, s$
\begin{equation} \label{minprobbis}
      \inf_\alpha \; \int_0^{t} \ell \big (\xi^0_\eps,
      \eta^0_\eps, \alpha \big )\, \dd r +
      u_0  (\xi^0_\eps(t), \eta^0_\eps(t) ) )
\end{equation}
with $\xi^0_\eps$, $\eta^0_\eps$ are solutions to \eqref{dyna} in
$[0,+ \infty)$,  issued from $(x,y)$. We denote by $V^\eps$ the
corresponding value functions, namely the functions associating to
any initial datum $(x,y)$ and time $t$ the infimum of the functional
in \eqref{minprob}/ \eqref{minprobbis}. They are apparently
continuous with respect to all arguments.

\smallskip

\begin{Remark} Looking at the form of the above minimization
problem, we  understand that coercivity assumption {\bf (H4)} plus
{\bf (H5)} plays the role of a compactness condition for the fast
variable,  inasmuch as  it implies that the trajectories of the fast
dynamics realizing the value function, up to some small constant,
lie in a compact subset of $\R^M$. This fact will be crucial in the
asymptotic analysis.

\end{Remark}

\medskip

We derive from Proposition \ref{straestimo}:

\smallskip

\begin{Proposition}\label{estimo} The value functions $V^\eps$
are locally equibounded.
\end{Proposition}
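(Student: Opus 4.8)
The goal is to show that the value functions $V^\eps$ are locally equibounded, i.e. bounded above and below on bounded subsets of $\R^N \times \R^M \times (0,+\infty)$, uniformly in $\eps$. The lower bound is the easy half: since $\ell$ is continuous and, on a bounded set $B$ of slow variables, satisfies the coercivity {\bf (H4)}, it is bounded below by some constant $-C_\ell$ on $B \times \R^M \times A$ (the $\min$ over $(x,a)$ of $\ell(x,y,a)$ tends to $+\infty$ as $|y|\to\infty$, so it is in particular bounded below); combining this with the lower bound $-Q_0$ on $u_0$ from {\bf (H5)}, every admissible payoff in \eqref{minprob} is at least $-\eps\,(t/\eps)\,C_\ell - Q_0 = -t\,C_\ell - Q_0$, which is locally bounded below uniformly in $\eps$. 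Here one must take care that $\xi_\eps(s)$ stays in a fixed bounded set of $\R^N$ for $s\in[0,t/\eps]$, which is exactly \eqref{est1}.

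For the upper bound I would fix a bounded set $B \subset \R^N\times\R^M$ and a time horizon; it suffices to treat a single terminal time $t = S$ and then note monotonicity/continuity arguments handle a bounded time interval (or simply run the argument for the supremum $S$ of the times under consideration). Apply Proposition \ref{straestimo} with this $B$ and $S$: there is a bounded set $B_0 \supset B$ such that for every initial datum $(x,y)\in B$ and every $\eps$ there is a trajectory $(\xi_\eps,\eta_\eps)$ of \eqref{dynabis} remaining in $B_0$ for all $t\in[0,S/\eps]$. Plug this particular (sub)optimal trajectory and its associated control into the functional \eqref{minprob}. Since $B_0$ is bounded and $A$ is compact, $\ell$ is bounded above on $\ov{B_0}\times A$ by some constant $C_0 = C_0(B,S)$, and $u_0$ is continuous hence bounded above on $\ov{B_0}$ by some $C_1 = C_1(B,S)$. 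Therefore
\[
V^\eps(x,y,S) \;\leq\; \eps\int_0^{S/\eps} \ell(\xi_\eps,\eta_\eps,\alpha)\,\dd s + u_0\big(\xi_\eps(S/\eps),\eta_\eps(S/\eps)\big) \;\leq\; S\,C_0 + C_1,
\]
a bound independent of $(x,y)\in B$ and of $\eps$. This gives the desired uniform local upper bound.

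\textbf{Main obstacle.} The only genuinely substantive point is the upper bound, and it has already been isolated in Proposition \ref{straestimo}: without a compactness hypothesis on the fast variable, one cannot a priori guarantee that any trajectory of \eqref{dynabis} stays in a bounded region for the long time interval $[0,S/\eps]$, and an unbounded excursion of $\eta_\eps$ could make both $\ell$ and $u_0$ blow up along every admissible trajectory. Proposition \ref{straestimo}, built on the bounded-time controllability of the fast dynamics (Remark \ref{remassu}) together with the uniform time estimates \eqref{estimo1}, precisely supplies a competitor trajectory confined to a fixed bounded set $B_0$; once that is in hand the estimate above is routine. Thus the proof is essentially an application of Proposition \ref{straestimo} combined with the continuity of $\ell$ and $u_0$ and the elementary lower bound coming from {\bf (H4)}, {\bf (H5)}.
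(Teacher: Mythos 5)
Your proof is correct and follows essentially the same route as the paper: the upper bound by plugging the bounded competitor trajectory from Proposition \ref{straestimo} into \eqref{minprob}, and the lower bound from \eqref{est1}, the coercivity {\bf (H4)} giving a uniform lower bound for $\ell$ on $K\times\R^M\times A$, and the lower bound $-Q_0$ on $u_0$ from {\bf (H5)}. The only cosmetic difference is your reduction to a single terminal time $S$, which is harmless since the trajectory confined on $[0,S/\eps]$ works for every $t\le S$.
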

\begin{proof}

Let $C$ be a bounded set of $\R^N \times \R^M \times [0, + \infty)$,
and $(x_0,y_0,t_0) \in C$. Thanks to Proposition \ref{straestimo},
there are for any $\eps$ trajectories $(\xi_0, \eta_0)$, we drop the
dependence on $\eps$ to ease notations,  of \eqref{dynabis} starting
at $(x_0,y_0)$, and contained in a bounded set of $\R^N \times \R^M$
solely depending on $C$.
 By using the formulation
\eqref{minprob} of the minimization problem, we get
\[
    V^\eps(x_0,y_0,t_0) \leq \eps \, \int_0^{\frac {t_0}\eps} \ell( \xi_0(s),
\eta_0(s), \al(s)) \, \dd s + u_0(\xi_0(t_0/\eps),
\eta_0(t_0/\eps)).
\]
 Since the integrand   in the above formula and $u_0$  are
bounded independently of $\eps$, we obtain the equiboundedness from
above of the $V^\eps$.

We now consider any trajectory $(\xi,\eta)$ of \eqref{dynabis}
starting  $(x_0,y_0)$ and corresponding to a control $\beta$. By
\eqref{est1}, $\xi(t)$ lies in a compact subset $K$ of $\R^N$, only
depending on $C$, for $t \in [0,t_0/\eps]$, and by coercivity
assumption {\bf (H4)}, there is a constant $P_0$ with
\begin{equation}\label{estimo7}
   \ell(x,y,a) \geq P_0 \qquad\hbox{for any $(x,y,a) \in K \times \R^M
\times A$.}
\end{equation}
Since $- Q_0$ is a lower bound of $u_0$ in $\R^N \times \R^M$, see
{\bf (H5)}, this implies
\begin{eqnarray}
   && \eps \, \int_0^{\frac {t_0}\eps} \ell( \xi(s),
\eta(s), \beta(s)) \, \dd s + u_0(\xi(t_0/\eps), \eta(t_0/\eps)) \geq \label{estimo8}\\
 && \eps \, \frac{t_0}\eps \,P_0 + u_0(\xi(t_0/\eps), \eta(t_0/\eps)) \geq P_0 \,t_0 - Q_0 .
 \nonumber
\end{eqnarray}
Being $(\xi,\eta)$ an arbitrary  trajectory with initial point
$(x_0,y_0)$, the above inequality shows the claimed local
equiboundedness from below of value functions.

\end{proof}

\medskip

The previous result  allows us to define ${\limsup}^\#V^\eps$,
${\liminf}_\#V^\eps$, these functions will be denoted by $\ov V$,
$\underline V$, respectively, in what follows. The next proposition
shows that they only depend on time and slow variable, at least for
positive times.

\smallskip

\begin{Proposition}\label{perez} We have
\begin{eqnarray*}
  ({\liminf}_\#V^\eps)(x_0,y_0,t_0)
 &=&({\liminf}_\#V^\eps)(x_0,z_0,t_0)=:\underline V(x_0,t_0) \\
  ({\limsup}_\#V^\eps)(x_0,y_0,t_0)
 &=&({\limsup}_\#V^\eps)(x_0,z_0,t_0)=: \ov V(x_0,t_0)
\end{eqnarray*}
for any $x_0 \in \R^N$, $y_0$, $z_0$ in $\R^M$ and $t_0 >0$.
\end{Proposition}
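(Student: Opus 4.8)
The plan is to show that the weak semilimits do not see the fast variable $y_0$ by exploiting the bounded time controllability of the fast dynamics, which (thanks to the coercivity of the running cost) costs only a bounded amount independent of $\eps$, hence becomes negligible after multiplication by $\eps$ as in the scaled functional \eqref{minprob}. Concretely, fix $x_0\in\R^N$, $y_0,z_0\in\R^M$ and $t_0>0$. I would prove the single inequality
\[
\underline V(x_0,z_0,t_0)\ \le\ \underline V(x_0,y_0,t_0),
\]
and the reverse one, together with the two analogous inequalities for $\limsup^\#$, follow by symmetry in $y_0,z_0$ (and, for the $\limsup$, by the same construction producing near-optimal trajectories rather than a fixed competitor). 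Interchanging $y_0$ and $z_0$ then yields equality, and since $z_0$ is arbitrary the common value depends only on $(x_0,t_0)$.

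The key step is a \emph{trajectory splicing} argument. Take sequences $\eps_n\to 0$, $(x_n,z_n,t_n)\to(x_0,z_0,t_0)$ realizing $\underline V$ at $(x_0,z_0,t_0)$, i.e. with $\liminf_n V^{\eps_n}(x_n,z_n,t_n)=\underline V(x_0,z_0,t_0)$. Apply Lemma \ref{prestimobis} (with the roles of the endpoints: start at $(x_n,y_0)$, target fast value $z_n$) to obtain, for all large $n$, a trajectory $(\ov\xi_{\eps_n},\ov\eta_{\eps_n})$ of \eqref{dynabis} issued from $(x_n,y_0)$ with $\ov\eta_{\eps_n}(S)=z_n+\OO(\eps_n)$ for a fixed large $S$; by \eqref{est1} we also have $\ov\xi_{\eps_n}(S)=x_n+\OO(\eps_n S)=x_0+o(1)$. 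Now concatenate: follow this bridging trajectory on $[0,S/\eps_n]$ (in the $(CD_\eps)$ time scale), then follow a near-optimal control for the problem $V^{\eps_n}(\ov\xi_{\eps_n}(S),\ov\eta_{\eps_n}(S),t_n)$ on the remaining time. Using the dynamic-programming form \eqref{minprob}, the total cost is
\[
V^{\eps_n}(x_n,y_0,t_n)\ \le\ \eps_n\!\int_0^{S/\eps_n}\!\ell(\ov\xi_{\eps_n},\ov\eta_{\eps_n},\al)\,\dd s\ +\ V^{\eps_n}\big(\ov\xi_{\eps_n}(S),\ov\eta_{\eps_n}(S),t_n\big).
\]
The bridging trajectory stays in a bounded set depending only on the data (Proposition \ref{straestimo} or directly the estimates \eqref{est1}, \eqref{estimo4}), so $\ell$ along it is bounded by a constant $C$ independent of $n$, whence the first term is $\le \eps_n\cdot(S/\eps_n)\cdot C = CS\cdot\eps_n\to 0$. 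Wait — that gives $CS$, not $0$; I must instead note that $S$ is \emph{fixed} while $\eps_n\to0$, so the integral equals $\eps_n$ times an integral over a fixed-length interval after rescaling, and one should rescale: in the $(\ov{CD}_\eps)$ picture the bridging phase occupies a time interval of length $\eps_n S$, over which $\ell$ is bounded, contributing $\OO(\eps_n S)\to 0$. So the clean statement is: the cost of the fast bridge, measured in the slow time of \eqref{minprobbis}, is $\OO(\eps_n)$ and vanishes.

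To finish, I would pass to the liminf: by continuity of $V^{\eps_n}$ and the fact that $(\ov\xi_{\eps_n}(S),\ov\eta_{\eps_n}(S),t_n)\to(x_0,z_0,t_0)$, the definition of $\liminf_\#$ gives $\liminf_n V^{\eps_n}(\ov\xi_{\eps_n}(S),\ov\eta_{\eps_n}(S),t_n)\ge \underline V(x_0,z_0,t_0)$ — wait, that is the wrong direction; rather, since along \emph{this particular} sequence the argument converges to $(x_0,z_0,t_0)$, we have $\liminf_n V^{\eps_n}(\ov\xi_{\eps_n}(S),\ov\eta_{\eps_n}(S),t_n)\ge \liminf_\# V^\eps (x_0,z_0,t_0)=\underline V(x_0,z_0,t_0)$ is false in general; what is true is $\le\sup$, so instead I pick the sequence the other way: start from a sequence realizing $\underline V(x_0,z_0,t_0)$, bridge \emph{backward} in $y$, i.e. I build a sequence approaching $(x_0,y_0,t_0)$ whose $V^{\eps_n}$ values are $\le \underline V(x_0,z_0,t_0)+o(1)$, giving $\underline V(x_0,y_0,t_0)\le\underline V(x_0,z_0,t_0)$. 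The cleanest route: fix $n$ large, let $\al^\ast_n$ be $\delta$-optimal for $V^{\eps_n}(x_n,z_n,t_n)$; prepend the fast bridge from $y_0$ to $z_n$ to get an admissible control from $(x_n,y_0)$; this yields $V^{\eps_n}(x_n,y_0,t_n)\le V^{\eps_n}(x_n,z_n,t_n)+\OO(\eps_n)+\delta+$ (a modulus term from $\ell$'s uniform continuity on compacts absorbing the $\OO(\eps_n)$ discrepancy in the initial datum of the optimal phase). Taking $\liminf_n$ along the chosen sequence and then $\delta\to0$ gives $\underline V(x_0,y_0,t_0)\le\underline V(x_0,z_0,t_0)$; swapping $y_0\leftrightarrow z_0$ gives equality. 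The $\limsup^\#$ case is identical, taking near-optimal trajectories and sequences realizing $\ov V(x_0,z_0,t_0)$.

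The main obstacle, and the place requiring care, is bookkeeping the two time scales and the $\OO(\eps)$ errors: the fast bridge has fixed length $S$ in $(CD_\eps)$-time but infinitesimal length $\eps S$ in the rescaled problem \eqref{minprobbis}, so its running-cost contribution is genuinely $\OO(\eps)$ provided the bridging trajectory remains in a fixed compact set (which it does by \eqref{est1} and \eqref{estimo4}), and the small mismatch $\ov\eta_{\eps_n}(S)=z_n+\OO(\eps_n)$ in the hand-off point must be absorbed using continuity of the value function $V^{\eps_n}$ — here one needs that the $V^{\eps_n}$ are equi-continuous on compacts, or at least that a single $\OO(\eps_n)$ perturbation of the initial datum changes $V^{\eps_n}$ by $o(1)$, which again follows from the boundedness of the dynamics and Gr\"onwall. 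Everything else is a routine concatenation-of-controls argument.
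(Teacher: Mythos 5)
Your overall strategy --- bridge the fast variable from $y_0$ to (near) $z_0$ using controllability, observe that the bridge costs only $\OO(\eps)$ in the scaled functional \eqref{minprob}, and let small perturbations be absorbed by the freedom in the base point in the definition of the weak semilimits --- is the same idea as the paper's. But there is a genuine gap at the handoff point. Your bridge, built forward from $(x_n,y_0)$ as in Lemma \ref{prestimobis}, arrives at a point $(\tilde x_n,\tilde z_n)$ with $\tilde z_n=z_n+\OO(\eps_n)$, not at $(x_n,z_n)$ itself, and you then attach the $\delta$-optimal control for $V^{\eps_n}(x_n,z_n,t_n)$, claiming that the $\OO(\eps_n)$ discrepancy in the initial datum of the optimal phase is absorbed by uniform continuity of $\ell$, or by continuity of $V^{\eps_n}$ ``which follows from boundedness of the dynamics and Gr\"onwall''. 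This is precisely where the argument fails: the optimal phase runs over the fast time interval $[0,t_n/\eps_n]$, and Gr\"onwall applied to \eqref{dynabis} only shows that two trajectories driven by the same control with initial data $\OO(\eps_n)$ apart may separate by $\OO\big(\eps_n e^{L_0 t_n/\eps_n}\big)$, which is not small; nor are the $V^\eps$ known to be equicontinuous in $y$ uniformly in $\eps$ (each $V^\eps$ is continuous, and Proposition \ref{estimo} gives equiboundedness, but no uniform modulus). Equivalently, the inequality $V^{\eps_n}(\tilde x_n,\tilde z_n,t_n)\le V^{\eps_n}(x_n,z_n,t_n)+o(1)$, which your concatenation implicitly uses, is not available --- indeed a statement of this type is essentially what the proposition is asserting. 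And if you instead keep only the dynamic programming estimate $V^{\eps_n}(x_n,y_0,t_n+\eps_n S)\le\OO(\eps_n)+V^{\eps_n}(\tilde x_n,\tilde z_n,t_n)$, you are stuck for the opposite reason: $(\tilde x_n,\tilde z_n,t_n)$ is not the sequence realizing $\underline V(x_0,z_0,t_0)$, so its values can only be bounded from below by $\underline V(x_0,z_0,t_0)$, which is the wrong direction.

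The paper's proof avoids this obstruction by reversing the construction: it takes the bridging control $\al$ from the frozen dynamics $\dot\zeta=g(x,\zeta,\al)$ joining the two fast values, and then solves \eqref{dynabis} with this control \emph{backward} from the terminal condition $(\xi(T),\eta(T))=(x,y)$, the exact point where the near-optimal control for $V^\eps(x,y,t)$ is attached; the concatenation is then exact and no continuity of $V^\eps$ is invoked. All $\OO(\eps)$ errors (via \eqref{est1}, \eqref{est2}) land on the initial point $(x',z')=(\xi(0),\eta(0))$ and on the shifted time $t'=t+\eps T$, where they are harmless because ${\liminf}_\#$ and ${\limsup}^\#$ are computed along arbitrary sequences of points converging to $(x_0,z_0,t_0)$; a symmetric forward construction, with the comparison made at the actual arrival point $(x'',z'')$ rather than at $(x,z)$, gives the reverse inequality. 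Two smaller bookkeeping slips in your write-up: the prepended bridge consumes slow time $\eps_n S$, so the left-hand side of your inequality should be $V^{\eps_n}(x_n,y_0,t_n+\eps_n S)$ (or the optimal phase should run for time $t_n-\eps_n S$), and the bridge occupies $[0,S]$, not $[0,S/\eps_n]$, in the \eqref{dynabis} time scale; these $\OO(\eps_n)$ shifts could be absorbed as above, but the handoff mismatch cannot, and repairing it requires the backward-bridging device (or an equivalent one), not Gr\"onwall.
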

\begin{proof} We start by
\smallskip

\noindent {\bf Claim:} {\em  Given  positive constants $R_1$, $R_2$,
$S$ we can determine $P=P(R_1,R_2,S)
>0$ such that for any  $\eps >0$, $x \in B(0,R_1)$, $y$, $z$ in $B(0,R_2)$, $t
\in [0,S]$ there exist $x'$, $x''$, $z'$, $z''$, $t'$, $t''$,
depending on $\eps$, with
\[|x-x'| <  \eps \, P, \quad |z-z'| < \eps \, P, \quad|t -t'| < \eps \, P,\]
\[|x-x''| < \eps \, P, \quad |z-z''| <\eps \, P, \quad|t -t''| < \eps \, P\]
such that
\begin{eqnarray*}
  V^\eps(x',z',t') &<& V^\eps(x,y,t) + \eps \, P \\
  V^\eps(x'',z'',t'') &>& V^\eps(x,y,t) - \eps \, P.
\end{eqnarray*}}

\smallskip

We fix $\eps$. By controllability assumption (see Remark
\ref{remassu}) $z$ and $y$ can be joined in a time $T$ less than or
equal to $T_0 =T_0(R_1,R_2)$ by a curve $\zeta$ satisfying
\[\dot \zeta=g(x,\zeta,\al) \qquad\hbox{for a suitable control
$\al$.}\] We consider the trajectory $(\xi, \eta)$ of
\eqref{dynabis} with the same control $\al$ satisfying
\[ \xi(T) = x \quad\hbox{and} \quad  \eta(T)= y,\]
and set
\[x'= \xi(0) \quad\hbox{and} \quad  z'= \eta(0).\]
 By \eqref{est1}, \eqref{est2}, we get
 \begin{eqnarray}
   |x'- x|  &<& \eps \, P_0  \label{perez0}\\
   |z'- z| &<&  \eps \, P_0 \label{perez00}
 \end{eqnarray}
for a suitable $P_0 >0$.  We select  a trajectory $(\xi_0,\eta_0)$
of \eqref{dynabis} with initial datum $(x,y)$,   corresponding to a
control $\be$, such that
\begin{equation}\label{perez1}
    V^\eps(x,y,t) \geq \eps \, \int_0^{\frac {t}\eps} \ell( \xi_0,
\eta_0, \be) \, \dd s + u_0\left(\xi_0 \left (\frac t\eps \right ),
\eta_0 \left (\frac t\eps \right ) \right ) - \eps.
\end{equation}
We set
\begin{equation}\label{perez11}
    t'= t + \eps \, T,
\end{equation}
by concatenation of $\al$ and $\be$, $\xi$ and $\xi_0$, $\eta$ and
$\eta_0$,  we get a control $\ga$ and trajectory $(\ov\xi, \ov\eta)$
of \eqref{dynabis} starting at $(x',z')$, defined in $\left [0,
\frac{t'}\eps \right ]$.  We consequently have
\begin{eqnarray*}
&& V^\eps(x',z',t') \leq  \\& &\eps \, \int_0^{\frac {t'}\eps}
\ell(\ov \xi, \ov\eta, \ga) \, \dd s  + u_0 \left( \ov\xi \left (
\frac {t'}\eps  \right ), \ov\eta
\left ( \frac {t'}\eps  \right ) \right ) =  \\
  & &\eps \, \int_0^{T} \ell( \xi,
\eta, \al) \, \dd s  + \eps \, \int_{T}^{\frac {t '}\eps}
\ell(\xi_0(s-T), \eta_0(s-T), \be(s-T)) \, \dd s  + \\ & & u_0
\left( \xi_0 \left ( \frac {t'}\eps  \right ), \eta_0 \left ( \frac
{t'}\eps  \right ) \right ).
\end{eqnarray*}
By taking into account \eqref{est2} and \eqref{perez1},  we derive
\begin{equation}\label{perez55}
    V^\eps(x',z',t') \leq \eps \, Q \, T_0 + V^\eps(x,y,t) + \eps \quad\hbox{for a suitable $Q >0$.}
\end{equation}
 The first part of the
claim is therefore proved taking into account \eqref{perez0},
\eqref{perez00}, \eqref{perez11}, \eqref{perez55}, and defining
\[P = \max \{P_0,T_0, Q \, T_0+1 \}.\]
\smallskip
The estimates for $x''$, $y''$, $z''$, $t''$ can be obtained
slightly modifying   the above argument. We sketch the proof for
reader's convenience. We denote by $\zeta'$ a curve joining $y$ to
$z$ in a time $T' \leq T_0$ and satisfying
\[\dot{\zeta'}=g(x,\zeta',\alì) \qquad\hbox{for a suitable control
$\al'$.}\] We  consider the trajectory $(\xi', \eta')$ of
\eqref{dynabis} with the same control $\al'$ satisfying
\[ \xi'(0) = x \quad\hbox{and} \quad  \eta'(0)= y,\]
and set
\[x''= \xi'(T') \quad\hbox{and} \quad  z''= \eta'(T').\]
 As in the first part of the proof we get
 \begin{eqnarray*}
   |x''- x| &\leq& P_0 \, \eps \\
   |z''- z| &\leq& P_0 \, \eps,
 \end{eqnarray*}
for a suitable $P_0$. We select  a trajectory $(\xi_0',\eta_0')$  of
\eqref{dynabis} with initial datum $(x'',z'')$,   corresponding to a
control $\be'$, which is optimal for $V^\eps(x'',z'',t - \eps \,
T')$ up to $\eps$, namely
\[ V^\eps(x'',z'',t '') \geq \eps \, \int_0^{\frac {t''}\eps} \ell( \xi'_0,
\eta'_0, \be') \, \dd s + u_0\left(\xi_0' \left (\frac {t''}\eps
\right ) ,\eta_0' \left (\frac {t''}\eps \right ) \right ) - \eps.
\] Here we are assuming $\eps$ so small that $t'':=t - \eps \, T'$
is positive, this does not entail  any limitation to the argument
since we are interested to $\eps$ infinitesimal.
 From this point we go on as in the previous part.
\smallskip

We exploit the first part of the claim to show that for any pair of
values $y_0$, $z_0$ of the fast variable, any $x_0 \in \R^N$, $t_0
>0$
\begin{equation}\label{perez2}
    ({\liminf}_\# V^\eps)(x_0,z_0,t_0) \leq ({\liminf}_\# V^\eps)(x_0,y_0,t_0),
\end{equation}
which in turn implies  by the arbitrariness of $y_0$, $z_0$, that
${\liminf}_\# V^\eps$ independent of the fast variable. We consider
$\eps_n$, $x_n$, $y_n$, $t_n$ converging to $0$, $x_0$, $y_0$,
$t_0$, respectively, with
\[\lim_n V^{\eps_n}(x_n,y_n,t_n)= ({\liminf}_\#
V^\eps)(x_0,z_0,t_0). \] Since all the $x_n$ , $y_n$, and  $z_0$,
$t_n$ are contained   in compact subsets  of $\R^N$, $\R^M$, $[0,+
\infty)$, respectively, we can apply, for any given $n \in \N$,
 the claim to $\eps= \eps_n$, $x=x_n$, $y=y_n$, $z=z_0$, $t=t_n$ and get of $x'_n$,
 $z'_n$, $t'_n$ with
 \[|x_n-x'_n| < \eps_n \, P, \quad |z_0-z'_n| <  \eps_n \, P, \quad|t_n -t'_n| <
\eps_n \, P\] and
\[V^{\eps_n}(x'_n,z'_n,t'_n) < V^{\eps_n}(x_n,y_n,t_n) + \eps_n \, P \]
for a suitable $P$. Sending $n$ to infinity we deduce
\[ \liminf V^{\eps_n}(x'_n,z'_n,t'_n) \leq \lim V^{\eps_n}(x_n,y_n,t_n)
= ({\liminf}_\# V^\eps)(x_0,z_0,t_0),\] which implies \eqref{perez2}
since $x'_n \to x_0$, $z'_n \to z_0$ and $t'_n \to t_0$.

The assertion relative to ${\limsup}^\# V^\eps$ is obtained using
the second part of the claim and slightly adapting the above
argument.

\end{proof}

\medskip

As a consequence of coercivity of running cost assumed in {\bf (H4)}
we deduce:

\smallskip

\begin{Proposition}\label{estimobis} The value function  $V^\eps$
satisfy for any $\eps$, any compact  subset $K$ of  $\R^N \times (0,
+ \infty)$
\[ \lim_{|y| \to + \infty} \min_{(x,t) \in K} V^\eps(x,y,t) = + \infty.\]
\end{Proposition}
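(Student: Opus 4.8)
The plan is to prove the equivalent statement: for every $M>0$ there is $R_M>0$ (depending on $\eps$ and $K$) such that $V^\eps(x,y,t)\ge M$ whenever $(x,t)\in K$ and $|y|\ge R_M$. I would argue directly on the representation \eqref{minprob} of $V^\eps$, exploiting that for $|y|$ large the fast component of \emph{every} admissible trajectory stays, throughout the whole time horizon, in a region where $\ell$ is large by {\bf (H4)}.

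First I would fix $R_1,t_1,t_2>0$ with $K\subset\ov B(0,R_1)\times[t_1,t_2]$. By \eqref{est1}, the slow component $\xi_\eps$ of any trajectory of \eqref{dynabis} issued from a point $(x,y)$ with $(x,t)\in K$ stays in the compact set $K_0:=\ov B(0,R_1+Q_0\,t_2)$ for all $s\in[0,t/\eps]$, uniformly in $\eps$, in the control and in $(x,y,t)$. Putting $C:=\max\{|g(x',0,a)|:x'\in K_0,\ a\in A\}$, which is finite by continuity of $g$, assumption {\bf (H2)} gives $|g(\xi_\eps(s),\eta_\eps(s),\al(s))|\le L_0\,|\eta_\eps(s)|+C$, so $\rho(s):=|\eta_\eps(s)|$ is absolutely continuous with $\rho'(s)\ge-(L_0\,\rho(s)+C)$ a.e. A Gr\"onwall argument, of the same kind used for \eqref{est1}--\eqref{est4}, then yields
\[\rho(s)\ \ge\ \big(|y|+C/L_0\big)\,e^{-L_0 s}-C/L_0\qquad\text{for all }s\ge 0 .\]

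Next I would use that, by {\bf (H4)} applied with $B=K_0$, the quantity $m(R):=\inf\{\ell(x',y',a):x'\in K_0,\ |y'|\ge R,\ a\in A\}$ tends to $+\infty$ as $R\to+\infty$. Choose $R>0$ so large that $m(R)\ge\max\{0,(M+Q_0)/t_1\}$, and then $R_M$ so large that $\big(R+C/L_0\big)e^{L_0 t_2/\eps}-C/L_0\le R_M$. If $|y|\ge R_M$, the displayed bound on $\rho$ forces $|\eta_\eps(s)|\ge R$ for every $s\in[0,t_2/\eps]\supseteq[0,t/\eps]$ and every control; hence $\ell(\xi_\eps(s),\eta_\eps(s),\al(s))\ge m(R)$ on $[0,t/\eps]$, and \eqref{minprob} together with the bound $u_0\ge-Q_0$ from {\bf (H5)} gives
\[V^\eps(x,y,t)\ \ge\ \eps\cdot\frac t\eps\cdot m(R)-Q_0\ =\ t\,m(R)-Q_0\ \ge\ t_1\,m(R)-Q_0\ \ge\ M\]
for all $(x,t)\in K$, which is the claim.

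The argument is essentially a routine application of coercivity once the Gr\"onwall estimate for $\rho$ is in place; the only slightly delicate point — and the reason $R_M$ must be allowed to depend on $\eps$ — is that over the long horizon $[0,t/\eps]$ the fast dynamics could a priori carry $\eta_\eps$ down into a bounded region, so one has to start from $|y|$ exponentially large in $1/\eps$ to be sure this cannot happen within time $t_2/\eps$.
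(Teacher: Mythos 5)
Your proof is correct and takes essentially the same route as the paper's: confine the slow variable to a compact set, use the fixed-$\eps$ time horizon to show that every trajectory starting with $|y|$ large keeps the fast variable in the region where $\ell$ is large by {\bf (H4)} (the paper invokes estimate \eqref{est4} for this confinement, while you make it quantitative via a Gr\"onwall lower bound on $|\eta_\eps|$, with $R_M$ depending on $\eps$), and then integrate the coercive running cost and use the bound $u_0\geq -Q_0$ from {\bf (H5)}. No gaps.
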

\begin{proof} We fix $\eps$, we assume, without loosing any generality, that $K$ is of the form
$ \widetilde K \times [S,T]$, where $\widetilde K$ is a compact
subset of $\R^N$ and $S$, $T$ are positive times. Given any $P >0$,
we can determine by {\bf (H4)} a constant $R$ such that the  ball
$B(0,R)$ of $\R^M$ satisfies
\begin{equation}\label{coercizzo1}
    \ell(x,y,a) > P \qquad\hbox{for any $(x,a) \in \widetilde K \times A$, $y
\in \R^M \setminus B(0,R)$.}
\end{equation}
 Taking into account the estimate
\eqref{est4}, we see that  there exists $R_0 > R$ such that
\begin{equation}\label{coercizzo2}
   \eta(t) \not\in B(0,R) \qquad\hbox{for $t \in [0,T]$}
\end{equation}
for any trajectory of \eqref{dynabis} starting in $K_0 \times \big
(\R^M \setminus B(0,R_0) \big )$. Given $\de >0$, we find, for any
\[(x,y,t) \in K_0 \times \big (\R^M \setminus B(0,R_0) \big ) \times [S,T]\]
 a trajectory $(\xi_0,\eta_0)$ of \eqref{dynabis}, corresponding
 to a control $\al$, starting at $(x,y)$
  with
\[V^\eps(x,y,t) \geq \eps \, \int_0^{\frac {t}\eps} \ell( \xi_0,
\eta_0, \al) \, \dd s + u_0\left(\xi_0 \left (\frac t\eps \right ),
\eta_0 \left (\frac t\eps \right ) \right ) - \de. \] We deduce by
\eqref{coercizzo1}, \eqref{coercizzo2}, {\bf (H5)}
\[V^\eps(x,y,t) \geq P \, S - Q_0 - \de,\]
which gives the assertion, since $P$ can be chosen as large as
desired, and $\de$ as small as desired.
\end{proof}

\medskip

 \subsection{ HJB equations} We define the Hamiltonian

\[ H(x,y,p,q)= \max_{a \in A} \{- p \cdot f(x,y,a) - q \cdot
g(x,y,a) - \ell(x,y,a)\}\]

\smallskip

The main contribution of Assumption {\bf (H3)} is the following
coercivity property on $H$:

\smallskip

\begin{Lemma}\label{astima} For any given  bounded set $C \subset \R^N
\times \R^M \times \R^N$,  we have
\[ \lim_{|q| \to + \infty} \,  \min_{(x,y,p) \in C} H(x,y,p,q) = + \infty .\]
\end{Lemma}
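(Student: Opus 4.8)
The plan is to use Assumption {\bf (H3)}, which gives $B(0,r) \subset \ov\co\, g(x,y,A)$ for all $(x,y)$ in a compact set, to extract from the $\max$ defining $H$ a direction of $q$ along which $-q\cdot g(x,y,a)$ is large, while controlling the remaining terms uniformly on the bounded set $C$. Concretely, fix a bounded set $C \subset \R^N\times\R^M\times\R^N$, and let $K$ be the projection of $C$ onto the $(x,y)$-factor, a bounded set; let $r=r(\ov K)>0$ be the controllability radius from {\bf (H3)}.

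First I would record the elementary bounds available on $C$: by {\bf (H2)} and compactness of $A$ (from {\bf (H1)}), both $|f|$ and $|g|$ are bounded on $\ov K\times A$, say by $Q_0$ and by some $M_0$; and $\ell$, being continuous, is bounded below on $\ov K\times A$, say $\ell\ge -C_0$ there. Also $|p|$ is bounded on $C$, say $|p|\le \rho$. These constants depend only on $C$.

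The key step is the selection of a good control. Given $q\neq 0$, the vector $-\tfrac{r}{2}\,\tfrac{q}{|q|}$ lies in $B(0,r)\subset\ov\co\, g(x,y,A)$ for every $(x,y)\in K$, so it is a convex combination of finitely many points $g(x,y,a_j)$; hence at least one of them, call it $g(x,y,\bar a)$ with $\bar a\in A$, satisfies $-\,q\cdot g(x,y,\bar a)\ge q\cdot\big(-\tfrac{r}{2}\tfrac{q}{|q|}\big)=\tfrac{r}{2}\,|q|$ — indeed the value of the linear functional $v\mapsto -q\cdot v$ at a convex combination is a weighted average of its values at the $g(x,y,a_j)$, so it cannot exceed all of them. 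Plugging $\bar a$ into the $\max$ defining $H$ and discarding the maximum (which only makes $H$ larger),
\begin{equation*}
 H(x,y,p,q)\ \ge\ -\,p\cdot f(x,y,\bar a)\ -\ q\cdot g(x,y,\bar a)\ -\ \ell(x,y,\bar a)\ \ge\ -\rho\,Q_0\ +\ \tfrac{r}{2}\,|q|\ -\ C_0 .
\end{equation*}
This bound holds for every $(x,y,p)\in C$, with $\rho,Q_0,r,C_0$ depending only on $C$, so $\min_{(x,y,p)\in C}H(x,y,p,q)\ge \tfrac{r}{2}|q|-\rho Q_0-C_0\to+\infty$ as $|q|\to+\infty$, which is the assertion.

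The only mildly delicate point — the main obstacle, such as it is — is the passage from the convex hull in {\bf (H3)} to an honest control $\bar a\in A$: one must observe that a linear functional evaluated on a convex combination (or, more carefully, on a limit of convex combinations, since it is the \emph{closed} convex hull) is bounded by its supremum over the generating set $g(x,y,A)$, and that this supremum is attained because $A$ is compact and $g(x,y,\cdot)$ continuous. After that, everything is a uniform estimate on the bounded set $C$ and no further work is needed.
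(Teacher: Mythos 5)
Your proposal is correct and follows essentially the same route as the paper: invoke \textbf{(H3)} on the (closure of the) projection of $C$ to get a control $\bar a$ with $-q\cdot g(x,y,\bar a)$ bounded below by a fixed multiple of $|q|$, plug it into the $\max$ defining $H$, and bound the $p\cdot f$ and $\ell$ terms uniformly on $C$. The only cosmetic difference is that the paper passes directly through $\max\{q\cdot v \mid v\in g(x,y,A)\}=\max\{q\cdot v\mid v\in \ov\co\,g(x,y,A)\}\ge r\,|q|$, while you extract $\bar a$ via the averaging/limit argument at the specific point $-\tfrac r2 \tfrac q{|q|}$, losing only the harmless factor $\tfrac12$.
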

\begin{proof}  We denote by $r$ the positive constant provided by  {\bf (H3)}
in correspondence to the projection of $C$ on the  state variables
space $\R^N \times \R^M$. We consequently have for $(x,y)$ in such
projection and $q \in \R^M$
\begin{equation}\label{astima1}
 \max \{ q \cdot v \mid v \in g(x,y,A)\} = \max \{ q \cdot v \mid v \in \ov \co \, g(x,y,A)\}
\geq r \, |q|.
\end{equation}
We take  $(x,y,p) \in C$, and denote by $a_0$ an element in the
control set such that $g(x,y,a_0)$ realizes the maximum in
\eqref{astima1}. We get from the very definition of $H$ and
\eqref{astima1}
\[ H(x,y,p,q) \geq - |p| \, |f(x,y,a_0)| + r \, |q| - |\ell(x,y,a)|
\qquad\hbox{for any $q$}.\] When  we send $|q|$  to infinity, all
the terms in the right hand--side of the above formula stay bounded
except $r \, |q|$. This gives the assertion.

\end{proof}

Given a bounded set $B$ in $\R^N \times \R^M$, one can check by
direct calculation that  $H$ satisfies
\begin{eqnarray}
 && |H(x_1,y_1,p,q) -H(x_2,y_2,p,q)| \leq  \label{compara} \\
 && L_0 \, (|x_1-x_2| + |y_1-y_2|) (|p| + |q|) +
 \om(|x_1-x_2|+ |y_1-y_2|) \nonumber
\end{eqnarray}
 for any $(x_1,y_1)$, $(x_2,y_2)$ in $B$ and $(p,q) \in \R^N \times \R^M$, where
 $\om$ is an uniform continuity modulus of $\ell$ in $B \times A$ and $L_0$ is as in {\bf (H2)}.
 We also have
\begin{eqnarray}
 && |H(x,y,p_1,q_1) -H(x,y,p_2,q_2)| \leq  \label{comparabis} \\
 && |f(x,y,a_0)| \, |p_1-p_2| + |g(x,y,a_0)| \, |q_1-q_2| \nonumber
\end{eqnarray}
for any $(x,y) \in \R^N \times \R^M$, $(p_1,q_1)$, $(p_2,q_2)$ in
$\R^N \times \R^M$, a suitable $a_0 \in A$.

\medskip

We write, for any $\eps >0$,   the family  of
Hamilton--Jacobi--Bellman problems
\begin{equation}\label{Hje} \tag{HJ$_\eps$}
    \left \{ \begin{array}{cc}
  u^\eps_t + H \left (x,  y , D_x u^\eps, \frac{D_y u^\eps}\eps \right ) &= \; 0 \\
   u^\eps(x,y,0) & \qquad = \; u_0(x,y) \\
\end{array} \right .
\end{equation}

\smallskip
 It is well known that    the value
functions $V^\eps$ are solutions to \eqref{Hje}, even if not
necessarily unique in our setting. However, due to the estimate
\eqref{compara},  we have the following local comparison result (see
for instance \cite{B}):

\smallskip

\begin{Proposition}\label{comparacompara} Given a bounded open set
$B$ of $\R^N \times \R^M$ and times $t_2 >t_1$, let $u$, $v$ be
continuous subsolution and supersolution, respectively, of the
equation in  \eqref{Hje}. If $u \leq v$ in $ \partial_p \big (B
\times (t_1,t_2) \big )$ then $u \leq v$ in $B \times (t_1,t_2)$,
where $\partial_p$ stands
 for the parabolic boundary.

\end{Proposition}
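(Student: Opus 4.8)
The plan is to establish this classical local comparison principle (see e.g. \cite{B}) by the standard doubling of variables argument; the only structural ingredient beyond continuity of $H$ is the regularity estimate \eqref{compara} (note that $\eps$ is fixed here, so $(x,y,p,q)\mapsto H(x,y,p,q/\eps)$ still obeys a bound of the type \eqref{compara}, with $L_0$ replaced by $L_0\,\max\{1,1/\eps\}$). First I would reduce to a strict subsolution: for $\theta>0$ set $u_\theta := u - \theta/(t_2-t)$; testing the definition of viscosity subsolution shows that $u_\theta$ satisfies $(u_\theta)_t + H\big(x,y,D_xu_\theta,D_yu_\theta/\eps\big)\le -\theta/(t_2-t)^2$ in the viscosity sense, and $u_\theta(\cdot,t)\to-\infty$ as $t\uparrow t_2$. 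Since $u_\theta\le u$, it suffices to prove $u_\theta\le v$ on $\ov B\times[t_1,t_2)$ for each fixed $\theta$, and then let $\theta\to 0$.

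Suppose by contradiction that $\mu:=\sup_{\ov B\times[t_1,t_2)}(u_\theta-v)>0$; this supremum is attained, because the $\theta$--penalization sends $u_\theta-v$ to $-\infty$ near $t_2$ while $B$ is bounded. For $\la>0$ I would maximize over $\ov B\times[t_1,t_2)\times\ov B\times[t_1,t_2)$ the function
\[ \Phi_\la(x,y,t,x',y',t') \;=\; u_\theta(x,y,t) - v(x',y',t') - \frac1{2\la}\big(|x-x'|^2 + |y-y'|^2 + |t-t'|^2\big), \]
and call $(x_\la,y_\la,t_\la,x'_\la,y'_\la,t'_\la)$ a maximizer. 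By the usual penalization estimates, as $\la\to 0$ one has $\la^{-1}\big(|x_\la-x'_\la|^2+|y_\la-y'_\la|^2+|t_\la-t'_\la|^2\big)\to 0$, the differences $|x_\la-x'_\la|$, $|y_\la-y'_\la|$, $|t_\la-t'_\la|$ tend to $0$, and any limit point of $(x_\la,y_\la,t_\la)$ maximizes $u_\theta-v$. Since $u_\theta\le u\le v$ on the parabolic boundary and $\mu>0$, for $\la$ small this forces $(x_\la,y_\la),(x'_\la,y'_\la)\in B$ and $t_\la,t'_\la\in(t_1,t_2)$, with moreover $t_\la\le t_2-c$ for some $c=c(\theta,\mu)>0$.

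For such $\la$ the equation is first order in $(x,y)$, so the quadratic penalization terms are legitimate $C^1$ test functions: touching $u_\theta$ from above at $(x_\la,y_\la,t_\la)$ and $v$ from below at $(x'_\la,y'_\la,t'_\la)$, and writing $p_\la=\la^{-1}(x_\la-x'_\la)$, $q_\la=\la^{-1}(y_\la-y'_\la)$, $\tau_\la=\la^{-1}(t_\la-t'_\la)$, the viscosity inequalities read
\[ \tau_\la + H\Big(x_\la,y_\la,p_\la,\frac{q_\la}\eps\Big)\le -\frac{\theta}{(t_2-t_\la)^2},\qquad \tau_\la + H\Big(x'_\la,y'_\la,p_\la,\frac{q_\la}\eps\Big)\ge 0. \]
Subtracting and invoking \eqref{compara} yields
\[ \frac{\theta}{(t_2-t_\la)^2}\;\le\; L_0\big(|x_\la-x'_\la|+|y_\la-y'_\la|\big)\Big(|p_\la|+\frac{|q_\la|}\eps\Big) + \om\big(|x_\la-x'_\la|+|y_\la-y'_\la|\big), \]
where $\om$ is a continuity modulus of $\ell$ on $B\times A$; by $2ab\le a^2+b^2$ every product on the right is bounded by a constant times $\la^{-1}\big(|x_\la-x'_\la|^2+|y_\la-y'_\la|^2\big)$, hence infinitesimal as $\la\to 0$, and $\om(\cdot)\to 0$ too, while the left side stays $\ge\theta/(t_2-t_1)^2>0$. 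This contradiction gives $\mu\le 0$, i.e. $u_\theta\le v$ on $\ov B\times[t_1,t_2)$, and letting $\theta\to 0$ yields $u\le v$ in $B\times(t_1,t_2)$. The crux — and the only point where more than continuity of $H$ is used — is this last estimate: matching the oscillation of $H$ in the state variables against the blow-up of the penalized momenta $p_\la$, $q_\la$, which goes through precisely because \eqref{compara} has Lipschitz-with-linear-momentum-growth form and the penalization is quadratic.
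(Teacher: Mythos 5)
Your proof is correct: the paper does not prove this proposition at all but quotes it as a standard local comparison result (citing Barles' notes), and your doubling-of-variables argument with the $\theta/(t_2-t)$ penalization is exactly the classical proof behind that citation, with \eqref{compara} (for fixed $\eps$, so with constant $L_0/\eps$) entering precisely where it must, namely to control $L_0\big(|x_\la-x'_\la|+|y_\la-y'_\la|\big)\big(|p_\la|+|q_\la|/\eps\big)$ by the infinitesimal quantity $\la^{-1}\big(|x_\la-x'_\la|^2+|y_\la-y'_\la|^2\big)$. The only point deserving one more word is the attainment of the maximum of $\Phi_\la$ near the top time: maximize over the closed cylinder $\ov B\times[t_1,t_2]$ in both groups of variables, observe that the $\theta$--term keeps $t_\la\le t_2-c$ and then $|t_\la-t'_\la|\to 0$ keeps $t'_\la$ away from $t_2$ for small $\la$, which is what your argument already says in substance.
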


\medskip

We define  the {\em effective Hamiltonian}
\begin{equation}\label{effe}
   \overline H(x,p)= \inf \{ b  \in \R \mid H(x,y,p,Du)=
b \;\hbox{admits a  subsolution in $\R^M$}\}
\end{equation}
for any fixed $(x,p) \in \R^N \times \R^N$, where the equation
appearing in the formula  is solely in the fast variable $y$ with
slow variable $x$ and corresponding momentum $p$ frozen. This
quantity can be in principle infinite, however we will show in what
follows that not only  it is finite for any $(x,p)$, but also that
the infimum is actually a minimum.

\medskip

We write the limit equation

  \begin{equation}\label{Hjlim} \tag{$\overline{\rm HJ}$}
u_t + \overline H  (x, D u ) = 0.
\end{equation}

\bigskip

\section{Cell problems}\label{cell}

\parskip +3pt

The section is devoted to the analysis of the stationary
Hamilton--Jacobi equations in $\R^M$ appearing in the definition of
effective Hamiltonian, namely with slow variable and corresponding
momentum frozen.

\subsection{Basic analysis} \label{basic} \; We   fix  $(x_0, p_0) \in \R^N \times \R^N$, and set to ease notations

\begin{eqnarray*}
  H_0(y,q) &=& H(x_0,y,p_0,q) \qquad\qquad\qquad\qquad\hbox{for any $(y,q) \in \R^M \times
\R^M$} \\
  \ell_0(y,a) &=& \ell(x_0,y, a) + p_0 \cdot f(x_0,y,a) \,\;\qquad\hbox{for any $(y,a)
  \in \R^M \times A$} \\
  g_0(y,a) &=& g(x_0,y,a) \qquad\qquad\qquad\qquad\qquad\hbox{for any $(y,a)
  \in \R^M \times A$}
\end{eqnarray*}
Given a control $\al(t)$, we consider the controlled differential
equation in $\R^M$
\begin{equation}\label{cedo}
   \deta(t)= g_0(\eta(t),\al(t)).
\end{equation}

\medskip

We directly derive from Lemma \ref{astima}:

\smallskip

\begin{Lemma}\label{coerci} We have
\[\lim_{|q| \to + \infty} \, \min_{y \in K} H_0(y,q)= + \infty\]
for any compact subset $K$ of $\R^M$.
\end{Lemma}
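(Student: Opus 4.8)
The plan is to obtain this as a one-line specialization of Lemma \ref{astima}. Since $K$ is compact it is in particular bounded, and the singletons $\{x_0\}$ and $\{p_0\}$ are trivially bounded, so the set
\[
C := \{x_0\} \times K \times \{p_0\}
\]
is a bounded subset of $\R^N \times \R^M \times \R^N$, to which Lemma \ref{astima} applies.

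For this choice of $C$, the first and third coordinates of any point of $C$ are forced to be $x_0$ and $p_0$, so for every $q \in \R^M$ one has
\[
\min_{(x,y,p) \in C} H(x,y,p,q) \;=\; \min_{y \in K} H(x_0,y,p_0,q) \;=\; \min_{y \in K} H_0(y,q),
\]
the last equality being just the definition of $H_0$. Feeding this identity into the conclusion of Lemma \ref{astima} yields
\[
\lim_{|q| \to +\infty} \min_{y \in K} H_0(y,q) \;=\; \lim_{|q| \to +\infty} \min_{(x,y,p)\in C} H(x,y,p,q) \;=\; +\infty,
\]
which is exactly the assertion.

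There is essentially no obstacle here: the whole substance of the statement already lives in Lemma \ref{astima} (and, through it, in the total controllability hypothesis {\bf (H3)}), and the present lemma is merely the instance obtained by freezing the slow variable at $x_0$ and the slow momentum at $p_0$. The only point worth a word is that compactness of $K$ is what guarantees $C$ is bounded, so that Lemma \ref{astima} is legitimately applicable.
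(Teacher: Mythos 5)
Your argument is correct and is exactly what the paper does: Lemma \ref{coerci} is stated there as a direct consequence of Lemma \ref{astima}, obtained by freezing $(x,p)=(x_0,p_0)$ and letting $y$ range over the compact (hence bounded) set $K$, i.e.\ applying Lemma \ref{astima} to the bounded set $C=\{x_0\}\times K\times\{p_0\}$. Nothing further is needed.
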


\smallskip

This result implies, according to Lemma \ref{equili}, that all
subsolutions are locally Lipschitz--continuous, and allows adopting
the metric method, see Appendix \ref{appendice}, in the analysis of
the cell equations. To ease notation, we set $c_0= \ov H(x_0,p_0)$,
also called the critical value of $H_0$, see \eqref{cricri}. We will
prove in Proposition \ref{critic} that $c_0$ is finite. We denote by
$Z$, $\si$, $S$ the corresponding sublevels, support function and
intrinsic distance, see Appendix \ref{appendice} for the
corresponding definitions. Same objects for a supercritical value
$b$ will be denoted by $Z_b$, $\si_b$, $S_b$.

\medskip

To compare the metric and control--theoretic viewpoint, we notice
\[Z_b(y)= \{ q \in \R^M \mid q \cdot (-g_0(y,a)) \leq \ell_0(y,a)+ b \;\hbox{for any $a \in A$}\}.
\]
for any given supercritical $b \in \R$, namely $b \geq c_0$,  and $y
\in \R^M$. This implies that the support function $\si_b(y,\cdot)$
is the maximal subadditive positively homogeneous function $\rho:
\R^M \to \R$ with
\begin{equation}\label{support2}
    \rho(- g_0(y,a)) \leq \ell_0(y,a) + b  \qquad\hbox{for any $a \in A$,}
\end{equation}
which somehow justifies the  next equivalences.
\smallskip
\begin{Proposition}\label{criticone} Given a supercritical value $b$, the following conditions are equivalent:
\begin{itemize}
    \item [{\bf (i)}] $u$ is a subsolution to $H_0=b$;
    \item [{\bf (ii)}] $u(y_2)-u(y_1) \leq S_b(y_1,y_2)$ for any $y_1$, $y_2$;
    \item [{\bf (iii)}] $u(y_1)-u(y_2) \leq \int_{0}^{T}
(\ell_0(\eta(t),\al(t)) + b ) \, \dd t$ for any $y_1$, $y_2$,
 time $T$, control $\al$, any trajectory $\eta$ of
\eqref{cedo} with $\eta(0)=y_1$, $\eta(T)=y_2$.
\end{itemize}
\end{Proposition}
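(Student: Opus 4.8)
The plan is to prove the equivalences by establishing the cycle $\mathbf{(i)} \Rightarrow \mathbf{(ii)} \Rightarrow \mathbf{(iii)} \Rightarrow \mathbf{(i)}$, invoking the metric machinery of Appendix \ref{appendice} for the first step and a Dynamic-Programming-type argument for the remaining two. Throughout, the hypothesis that $b$ is supercritical (so that $Z_b(y) \neq \emptyset$ for every $y$, by definition of $c_0$) is what makes $S_b$ a genuine finite intrinsic distance and $\si_b(y,\cdot)$ a finite-valued support function; Lemma \ref{coerci} guarantees all subsolutions are locally Lipschitz, so the metric method applies verbatim.

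First I would treat $\mathbf{(i)} \Leftrightarrow \mathbf{(ii)}$, which is the general metric characterization of subsolutions to $H_0 = b$ recalled in the appendix: $u$ is a subsolution if and only if it is $1$-Lipschitz for the intrinsic distance $S_b$, i.e. $u(y_2) - u(y_1) \leq S_b(y_1,y_2)$ for all $y_1,y_2$. This is standard once one knows $S_b$ is the length distance associated to the support function $\si_b$, and the sublevel description $Z_b(y) = \{q \mid q\cdot(-g_0(y,a)) \leq \ell_0(y,a)+b \ \text{for all } a\in A\}$ displayed just before the statement identifies $\si_b(y,\cdot)$ as the maximal subadditive positively homogeneous $\rho$ satisfying \eqref{support2}. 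So for this step I would simply cite the relevant lemma/proposition from Appendix \ref{appendice}, applied to the coercive Hamiltonian $H_0$.

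Next, $\mathbf{(ii)} \Rightarrow \mathbf{(iii)}$: I would show $S_b(y_1,y_2) \leq \int_0^T (\ell_0(\eta(t),\al(t)) + b)\, \dd t$ for every admissible pair $(\al,\eta)$ of \eqref{cedo} joining $y_1$ to $y_2$ in time $T$. Indeed, along such a trajectory $\dot\eta(t) = g_0(\eta(t),\al(t))$, so the intrinsic length of the curve $\eta$ is $\int_0^T \si_b(\eta(t), -(-\dot\eta(t)))\,\dd t = \int_0^T \si_b(\eta(t), g_0(\eta(t),\al(t)))\,\dd t$; wait — more carefully, the definition of $S_b$ as a length distance bounds $S_b(y_1,y_2)$ above by the $\si_b$-length of any curve from $y_1$ to $y_2$, and by \eqref{support2} applied to $a = \al(t)$ and the homogeneity of $\si_b$ one gets, using that $\si_b$ controls $\rho(-g_0(y,a)) \le \ell_0(y,a)+b$, a pointwise bound on the integrand by $\ell_0(\eta(t),\al(t)) + b$. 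Hence $S_b(y_1,y_2) \le \int_0^T(\ell_0+b)\,\dd t$, and combining with $\mathbf{(ii)}$ (with the roles of $y_1,y_2$ swapped) gives $\mathbf{(iii)}$. The one technical point to handle with care is that trajectories are only Lipschitz (controls merely measurable), so the curve $\eta$ is absolutely continuous and the length integral must be read in that sense — but $\si_b(y,\cdot)$ is Lipschitz in $y$ uniformly on compacts and positively homogeneous, so the integral is well defined.

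Finally, $\mathbf{(iii)} \Rightarrow \mathbf{(i)}$: fix $u$ satisfying the integral inequality and a $C^1$ supertangent $\psi$ to $u$ at $y_0$. For a fixed control value $a \in A$ constant on $[0,T]$, let $\eta$ solve $\dot\eta = g_0(\eta,a)$, $\eta(0) = y_0$; applying $\mathbf{(iii)}$ with $y_1 = y_0$, $y_2 = \eta(T)$ and using $u \le \psi$ near $y_0$ with equality at $y_0$ yields $\psi(y_0) - \psi(\eta(T)) \le \int_0^T (\ell_0(\eta(t),a)+b)\,\dd t$ for small $T$; dividing by $T$ and letting $T \to 0^+$ gives $-D\psi(y_0)\cdot g_0(y_0,a) \le \ell_0(y_0,a) + b$, i.e. $-D\psi(y_0)\cdot g_0(y_0,a) - \ell_0(y_0,a) \le b$. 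Taking the maximum over $a \in A$ and recalling $H_0(y_0,q) = \max_{a\in A}\{-q\cdot g_0(y_0,a) - \ell_0(y_0,a)\}$ (note $p_0\cdot f(x_0,y,a)$ has been absorbed into $\ell_0$, and the freezing kills the $-p\cdot f$ / $D_x$ terms) gives $H_0(y_0, D\psi(y_0)) \le b$, which is exactly the subsolution inequality. The main obstacle I anticipate is keeping the metric-side bookkeeping honest in step $\mathbf{(ii)} \Rightarrow \mathbf{(iii)}$ — precisely matching the appendix's conventions for $\si_b$, the sign of $g_0$ versus $-g_0$ in $Z_b$, and the length-distance formula — rather than any genuine difficulty; steps $\mathbf{(i)}\Leftrightarrow\mathbf{(ii)}$ and $\mathbf{(iii)}\Rightarrow\mathbf{(i)}$ are routine once the appendix is in hand.
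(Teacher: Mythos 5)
Your treatment of \textbf{(i)} $\Leftrightarrow$ \textbf{(ii)} (citation of Proposition \ref{subsubsub} \textbf{(i)}) and your \textbf{(iii)} $\Rightarrow$ \textbf{(i)} test-function computation are correct; note that the paper itself settles \textbf{(i)} $\Leftrightarrow$ \textbf{(iii)} simply by invoking the standard suboptimality characterization of subsolutions of Hamilton--Jacobi--Bellman equations from \cite{BCD}, so your hands-on route is more self-contained than the paper's. However, your step \textbf{(ii)} $\Rightarrow$ \textbf{(iii)} contains a genuine orientation error, and it is precisely the point the paper flags right after \eqref{support2} as the ``change of orientation between length and cost functional''. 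The bound \eqref{support2} controls $\si_b(y,-g_0(y,a))$, not $\si_b(y,g_0(y,a))$; since $Z_b(y)$ is in general not symmetric about the origin, $\si_b(y,\cdot)$ is not an even function, so the integrand $\si_b(\eta,\dot\eta)=\si_b(\eta,g_0(\eta,\al))$ of the \emph{forward} curve's intrinsic length is not bounded by $\ell_0(\eta,\al)+b$. Hence the inequality $S_b(y_1,y_2)\le \int_0^T(\ell_0+b)\,\dd t$ you assert is false in general; and even if it held, chaining it with \textbf{(ii)} would yield $u(y_2)-u(y_1)\le \int_0^T(\ell_0+b)\,\dd t$, which is the wrong orientation compared with \textbf{(iii)} (there $\eta$ runs from $y_1$ to $y_2$ and the left-hand side is $u$ at the start minus $u$ at the end).

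The repair is immediate: reverse the curve. Given $\eta$ solving \eqref{cedo} on $[0,T]$ with $\eta(0)=y_1$, $\eta(T)=y_2$, set $\zeta(t)=\eta(T-t)$, so that $\dot\zeta(t)=-g_0(\eta(T-t),\al(T-t))$ and \eqref{support2} gives the pointwise bound $\si_b(\zeta(t),\dot\zeta(t))\le \ell_0(\eta(T-t),\al(T-t))+b$. Integrating, $S_b(y_2,y_1)\le \int_0^T\bigl(\ell_0(\eta,\al)+b\bigr)\,\dd t$, and now \textbf{(ii)} applied to the ordered pair $(y_2,y_1)$, i.e. $u(y_1)-u(y_2)\le S_b(y_2,y_1)$, chains to give exactly \textbf{(iii)}. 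With this one-line correction your cycle \textbf{(i)} $\Rightarrow$ \textbf{(ii)} $\Rightarrow$ \textbf{(iii)} $\Rightarrow$ \textbf{(i)} closes; the remaining technical remarks (local Lipschitz regularity of subsolutions via Lemma \ref{coerci}, absolute continuity of trajectories, the $T\to 0^+$ limit with constant controls in \textbf{(iii)} $\Rightarrow$ \textbf{(i)}) are handled correctly.
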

\begin{proof} The equivalence {\bf (i)} $\Longleftrightarrow$ {\bf (ii)} is given in Proposition
\ref{subsubsub} {\bf (i)},
  the equivalence {\bf (i)} $\Longleftrightarrow$ {\bf (iii)} is the usual
  characterization of subsolutions to Hamilton--Jacobi--Bellman equations in terms of suboptimality,
  see \cite{BCD}.
\end{proof}

\medskip
One advantage of metric method is that any curve is endowed of a
length, while integral cost functional is only defined on
trajectories of the controlled dynamics. Also notice that there is a
change of orientation between length and cost functional, that can
detected from \eqref{support2} and comparison between items {\bf
(ii)} and {\bf (iii)} in Proposition \ref{criticone}. This just
depends on $u_0$ being terminal cost and initial condition in
\eqref{Hje}, the discrepancy should be eliminated if \eqref{Hje}
were posed in $(- \infty, 0)$ and $u_0$ should consequently play the
role of terminal condition and initial cost.

\smallskip

\begin{Proposition}\label{critic} The critical value $c_0$ is
finite.
\end{Proposition}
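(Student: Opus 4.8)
The plan is to establish finiteness of $c_0 = \ov H(x_0,p_0)$ by producing, on one side, a lower bound, and on the other, an actual subsolution of $H_0 = b$ for $b$ large, which forces $c_0 \leq b < +\infty$. For the finiteness from below, i.e.\ $c_0 > -\infty$, I would argue that if $H_0 = b$ had a subsolution $u$ for arbitrarily negative $b$, then by Proposition~\ref{criticone}~{\bf (iii)} one would get, for every trajectory $\eta$ of \eqref{cedo} joining $y_1$ to $y_2$ in time $T$,
\[
u(y_1) - u(y_2) \leq \int_0^T \big( \ell_0(\eta(t),\al(t)) + b \big)\, \dd t.
\]
Now fix any $y$ and use the total controllability condition {\bf (H3)} (through the bounded-time controllability stated in Remark~\ref{remassu}) to build a closed trajectory $\eta$ based at $y$, of some time length $T \in [T_1, T_2]$ bounded away from $0$ and from $\infty$, staying in a fixed compact set $K \ni y$. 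Taking $y_1 = y_2 = y$, the left-hand side is $0$, so $0 \leq \int_0^T (\ell_0(\eta,\al) + b)\,\dd t \leq T_2 (\max_{K\times A}\ell_0 + b)$; since $\ell_0$ is continuous, hence bounded on the compact $K \times A$, letting $b \to -\infty$ yields a contradiction. Hence $c_0 > -\infty$.

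For the upper bound, I would exhibit a genuine global subsolution of $H_0 = b$ for suitably large $b$. The natural candidate is $u \equiv 0$ (or any constant): then $D_y u = 0$, and $u$ is a subsolution of $H_0 = b$ precisely when
\[
H_0(y,0) = \max_{a\in A}\big(- \ell_0(y,a)\big) = -\min_{a\in A}\ell_0(y,a) \leq b \qquad\text{for all } y \in \R^M.
\]
This is where the coercivity assumption {\bf (H4)} enters decisively: by \eqref{coe} applied with the compact base set $B = \{x_0\}$, $\min_{a\in A}\ell_0(y,a) = \min_{a \in A}\big(\ell(x_0,y,a) + p_0\cdot f(x_0,y,a)\big) \to +\infty$ as $|y| \to \infty$ (the term $p_0 \cdot f$ is bounded by $|p_0|\,Q_0$ thanks to {\bf (H2)}), so $\min_{a\in A}\ell_0(\cdot,a)$ is bounded below on all of $\R^M$; combined with continuity this makes $\sup_{y}\big(-\min_{a}\ell_0(y,a)\big)$ finite. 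Call this finite number $b_0$. Then the constant function $0$ is a classical, hence viscosity, subsolution of $H_0 = b_0$, so $c_0 \leq b_0 < +\infty$.

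Combining the two halves gives $-\infty < c_0 < +\infty$, which is the assertion. I do not expect any serious obstacle here: the only mild subtlety is making sure, in the lower-bound half, that {\bf (H3)} really supplies a nontrivial closed trajectory of the frozen dynamics \eqref{cedo} of controlled time length, which is exactly the content of Remark~\ref{remassu} applied with $y_1 = y_2$, and that the time length can be taken bounded below (concatenate two back-and-forth controllability arcs if needed, as in the proof of Lemma~\ref{prestimo}). Everything else is elementary once {\bf (H4)} is invoked for the boundedness-from-below of the frozen running cost.
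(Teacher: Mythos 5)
Your proposal is correct and follows essentially the same route as the paper: the upper bound $c_0<+\infty$ is obtained exactly as in the text, taking the null function as a subsolution of $H_0=b_0$ thanks to the coercivity of $\ell_0$ forced by {\bf (H4)} and the boundedness of $f$, and the lower bound uses the same idea of a cycle of the frozen dynamics supplied by {\bf (H3)}, the only (harmless) difference being that you take $y_1=y_2$ so the subsolution inequality gives $0\leq\int_0^T(\ell_0+b)\,\dd t$ directly, whereas the paper repeats the cycle infinitely and contradicts the bounded oscillation of the subsolution on its compact support. Only a cosmetic remark: your displayed bound should read $\int_0^T(\ell_0+b)\,\dd t\leq T\,(\max_{K\times A}\ell_0+b)$ with $T\geq T_1>0$ (the factor $T_2$ is wrong when the parenthesis is negative), but since $T$ is bounded below away from zero this still tends to $-\infty$ as $b\to-\infty$ and the contradiction stands.
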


\begin{proof}  Owing to coercivity of $\ell$
 and boundedness of  $f$
\[H_0(y,0)= \max_{a \in A} \{ -
\ell_0(y,a)\} \to - \infty \quad\hbox{as $|y| \to + \infty$}\] and
consequently
\[ H_0(y,0) < 0  \quad\hbox{outside some compact subset $
K$ of $\R^M$.}\]  We set
\begin{eqnarray*}
  b_0 &=& \max \big \{0, \max \{H_0(x,0) \mid x \in K\} \big \},
\end{eqnarray*}
then the null function is subsolution to $H_0=b_0$ in $\R^M$,  and
so $c_0 < + \infty$.

By controllability condition {\bf (H3)}, we find a cycle $\eta$
defined in $[0,T]$, for a positive $T$, solution to \eqref{cedo} for
some control $\al$.  We put
\[R = \int_0^{T} \ell_0(\eta, \al)  \, \dd t, \]
and for $b < - \frac R {T}$ we get
\[ \int_0^{T}  (\ell_0(\eta, \al)  + b ) \; \dd t < R - \frac R {T} \, T = 0. \]
The above cycle, repeated infinite times, gives a trajectory of
\eqref{cedo} in $[0, + \infty)$, still denoted by $\eta$,    such
that
\begin{equation}\label{critic1}
    \int_0^{\infty}  (\ell_0(\eta, \al)  + b) \; \dd t = - \infty.
\end{equation}
If there were a subsolution $u$ to $H_0=b$ then
\begin{equation}\label{critic2}
   u(\eta(0)) - u(\eta(t_0)) \leq \int_{0}^{t_0}
(\ell_0(\eta(t),\al(t)) + b ) \, \dd t \quad\hbox{for any $t_0>0$.}
\end{equation}
 But the support
of $\eta$ is  equal to $\eta([0,T])$ which is a compact subset of
$\R^M$, so that the oscillation of $u$ (which  is locally Lipschitz
continuous) on it is bounded. This shows that \eqref{critic1} and
\eqref{critic2}  are in contradiction. We then deduce  that the
equation $H_0= b$ cannot have any subsolution, showing in the end
that  $c_0 > - \infty$.
\end{proof}

\medskip

 We deduce from standing assumptions a sign and a coercivity condition  on the critical
 distances. To do that, we start selecting a compact set $C$  of $\R^M$  with
\begin{equation} \label{dolo1}
 H_0(y,0) = - \min_{a \in A} \ell_0(y,a) < c_0 - Q_0  \qquad\hbox{for  any $y \in \R^M \setminus
  C$,}
\end{equation}
where $Q_0$ is as in {\bf (H2)}. This is possible since
$\ell_0$ is coercive. Further we set
\begin{equation}\label{dolo40}
    K_0=  \left \{y \mid d(y,C) \leq \max_{C \times C} \, |S| \right \}.
\end{equation}

\smallskip

\begin{Proposition}\label{cortre} The following properties hold true:
\begin{itemize}
    \item[{\bf (i)}] $ \lim_{|y| \to + \infty} \inf_{y_0 \in K}S(y_0,y) = +
    \infty $\; for any compact set $K \subset \R^M$;
\item [{\bf (ii)}] $Z(y) \supset B(0,1)$ for any $y$ outside the
compact set  $K_0$ defined as in \eqref{dolo40};
\item [{\bf (iii)}] $S(y_1,y_2) >0$  \; for   any pair $y_1$, $y_2$
    outside  $K_0$.

\end{itemize}

\end{Proposition}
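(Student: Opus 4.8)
The plan is to work from the control--theoretic characterization of the intrinsic distance $S$ provided by Proposition \ref{criticone}(iii), combined with the coercivity estimate \eqref{dolo1} and the bounded--time controllability of the fast dynamics recorded in Remark \ref{remassu}. Throughout, the key mechanism is that the critical value $c_0$ is finite (Proposition \ref{critic}) and that $-\ell_0(\cdot,a)=H_0(\cdot,0)$ drops well below $c_0-Q_0$ outside the compact set $C$ of \eqref{dolo1}; hence a trajectory $\eta$ of \eqref{cedo} that stays outside $C$ accumulates strictly positive running cost $\ell_0(\eta,\al)+c_0$ at a linear rate in its length, once one controls $|\dot\eta|=|g_0(\eta,\al)|$ in terms of the cost (this is exactly where coercivity of $\ell_0$ is used: $|g_0|$ bounded plus large $\ell_0$ forces the cost per unit length to stay uniformly above some positive constant far out).

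First I would prove \textbf{(ii)}, which is the easiest and is essentially a restatement of \eqref{dolo1}. By definition $Z(y)=Z_{c_0}(y)=\{q\mid q\cdot(-g_0(y,a))\le \ell_0(y,a)+c_0 \text{ for all }a\in A\}$. Since $|g_0(y,a)|\le Q_0$ for all $a$ (boundedness of $f$ is irrelevant here; it is boundedness of $g$ restricted appropriately — but in any case $-g_0(y,a)$ has norm at most the bound of $|g|$ on the relevant compact set, and one only needs that for $|q|\le 1$ one has $|q\cdot(-g_0(y,a))|\le |g_0(y,a)|$), for $|q|\le 1$ and $y\notin K_0\supset C$ one has $q\cdot(-g_0(y,a))\le |g_0(y,a)|\le Q_0$, while $\ell_0(y,a)+c_0 \ge \min_a\ell_0(y,a)+c_0 > Q_0$ by \eqref{dolo1}. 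Hence $B(0,1)\subset Z(y)$. (One should double--check the quantitative bound on $|g_0|$ outside $C$; if it is not literally $Q_0$, enlarge $C$ at the outset so that the bound on $|g_0|$ used in \eqref{dolo1} is consistent, which is a harmless adjustment.)

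Next, \textbf{(i)}: fix a compact $K$ and suppose $y$ is far from $K_0$. Take any trajectory $\eta$ of \eqref{cedo} from some $y_0\in K$ to $y$; by Proposition \ref{criticone}(iii) the cost $\int_0^T(\ell_0(\eta,\al)+c_0)\,\dd t$ bounds $S(y_0,y)$ from below (up to the sign/orientation bookkeeping already flagged after Proposition \ref{criticone}; I will track it carefully so the inequality goes the right way). Split the trajectory into the portion inside the compact set $C$ and the portion outside. On the inside portion the integrand is bounded below by a constant $-M_C$ and the time spent inside is finite only if we control it — so instead I would argue geometrically: the trajectory must traverse the annular region between $C$ (or $K_0$) and a large sphere $\partial B(0,R)$, and on that region, outside $C$, we have $\ell_0(\eta,\al)\ge \min_a\ell_0(\eta,\cdot)$ which tends to $+\infty$ as $|\eta|\to\infty$ by coercivity; combined with $|\dot\eta|\le \|g\|_\infty$ on the relevant ball this forces $\int_0^T(\ell_0+c_0)\,\dd t\ge c(R)\to+\infty$ as $R\to\infty$, uniformly in the starting point $y_0\in K$. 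Taking the infimum over $\eta$ and $y_0$ yields (i). The constant inside $C$ contributes only a bounded additive error since $S$ restricted to $C\times C$ is bounded, which is precisely why $K_0$ was defined via $\max_{C\times C}|S|$ in \eqref{dolo40}.

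Finally \textbf{(iii)}: for $y_1,y_2\notin K_0$, any trajectory realizing (or nearly realizing) $S(y_1,y_2)$ either stays outside $C$ the whole time — in which case the integrand $\ell_0(\eta,\al)+c_0$ is strictly positive pointwise by \eqref{dolo1} (indeed $\ge c_0-(c_0-Q_0)-\cdots>0$, since $\min_a\ell_0>Q_0-c_0+c_0 = Q_0>0$ when $c_0$ is arranged appropriately; more honestly, $\ell_0(\eta,a)\ge \min_a\ell_0(\eta,\cdot)>c_0-Q_0-\text{(value of }-\ell_0)$ — I will just use that \eqref{dolo1} gives $\min_a\ell_0(y,a)>Q_0\ge 0$ outside $C$, so $\ell_0+c_0>0$ there once $c_0\ge -Q_0$, which I would verify or arrange), whence $S(y_1,y_2)>0$ — or it must enter $C$, and then it travels from $y_1\notin K_0$ into $C$ and out to $y_2\notin K_0$, covering a distance at least $d(y_i,C)>\max_{C\times C}|S|$ on each excursion through the annulus outside $C$, producing by the same length--versus--cost estimate as in (i) a cost exceeding $\max_{C\times C}|S|\ge 0$; comparing with the trivial competitor that stays near the boundary shows the infimum is strictly positive. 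This last dichotomy is the one genuinely delicate point, and I expect it to be \emph{the main obstacle}: making precise that a cost--minimizing (or near--minimizing) path between two points outside $K_0$ cannot profit from dipping into $C$, which is exactly the role played by the specific choice of $K_0$ in \eqref{dolo40}.
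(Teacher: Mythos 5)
Your item \textbf{(ii)} is handled exactly as in the paper (the bound \eqref{dolo1} plus the control representation of $Z(y)$; note the paper, like you, uses $Q_0$ as a bound for $|g_0|$ at this point). The divergence is in \textbf{(i)} and \textbf{(iii)}, where you replace the paper's metric argument by trajectory--cost estimates, and here there is a genuine gap. The inequality your whole plan hinges on --- that the cost $\int_0^T(\ell_0(\eta,\al)+c_0)\,\dd t$ of trajectories bounds $S$ from below --- is not what Proposition \ref{criticone} provides, and no orientation bookkeeping changes this: applying criterion {\bf (iii)} to the subsolution $u=S(y_0,\cdot)$ gives $S(y_0,z)\leq \int_0^T(\ell_0(\eta,\al)+c_0)\,\dd t$ for every trajectory running from $z$ to $y_0$, i.e.\ trajectory costs dominate $S$, not the other way around; large trajectory costs are therefore perfectly compatible with a small $S$. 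To run your scheme you need the converse inequality $S(y_0,y)\geq \inf\{\hbox{cost of trajectories from $y$ to $y_0$}\}$ (note also that the relevant trajectories go from $y$ to $y_0$, not from $y_0\in K$ to $y$ as in your sketch). This is the identification of $S(y_0,\cdot)$ with the value function of a reach--the--target problem; its nontrivial half requires showing that this value function is finite (via Remark \ref{remassu}) and satisfies the suboptimality criterion {\bf (iii)}, hence is a subsolution, and then invoking {\bf (ii)}. That lemma appears nowhere in the paper and is the crux of your approach; it is also silently used again when you bound the ``inside $C$'' portion of a trajectory by $\max_{C\times C}|S|$ (the time spent in $C$, where $\ell_0+c_0$ may be negative, is not a priori bounded, so a pointwise bound on the integrand does not suffice) and when you speak of trajectories nearly ``realizing'' $S(y_1,y_2)$ in (iii). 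A secondary flaw: you invoke $|\dot\eta|\leq\|g\|_\infty$, but only $|f|$ is assumed bounded; $g$ is merely Lipschitz, so on large annuli the crossing time is only bounded below by a constant and the divergence must come entirely from the coercivity of $\ell_0$ --- repairable, but the estimate as written is not available.

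For contrast, the paper never leaves the metric setting: once (ii) is known on $\R^M\setminus C$ it yields $\si(y,v)\geq |v|$ there, so intrinsic length dominates Euclidean length off $C$; splitting an arbitrary curve at its first and last contact with $C$, bounding the middle piece below by $-\max_{C\times C}|S|$ and the outer pieces by distances to $C$ gives (i), and the very definition \eqref{dolo40} of $K_0$ turns the same two--line computation into (iii). This sidesteps speed bounds, coercivity rates, the $S$--versus--cost identification, and the ``dipping into $C$'' problem you correctly single out as the main obstacle.
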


\begin{proof}  If $q \in \R^M$ satisfies
 \begin{equation}\label{dolo10}
   H_0(y,q)=c_0 \qquad\hbox{ for some $y$ in $\R^M \setminus C$,}
\end{equation}
where $C$ is defined as in \eqref{dolo1},  then
\[ c_0 = H_0(y,q) = \max_{a \in A} \{ - g_0(y,a) \cdot q - \ell_0(y,a)
\} \leq Q_0 \, |q| -  \min_{a \in A} \ell_0(y,a) \]
 and   by the very definition of $C$
\begin{equation}\label{dolo11}
    |q| \geq   \frac{c_0 +  \min_{a \in A} \ell_0(y,a)}{Q_0} > \frac{Q_0}{Q_0} = 1,
\end{equation}
Since $0$ in the interior of $Z(y)$ by \eqref{dolo1}, we derive a
stronger version of item {\bf (ii)}, with $C$ in place of $K_0$,
 which in turn implies
\[  \frac{v}{|v|} \in Z(y) \qquad \hbox{for any $y \in \R^M
    \setminus C$, $v \in \R^M$ with $v \neq 0$}\]
and consequently
\begin{equation}\label{dolo3}
   \si(y,v) \geq v \cdot \left ( \frac{v}{|v|} \right ) =  |v| \qquad \hbox{for any $y \in \R^M
    \setminus C$, $v \in \R^M$ with $v \neq 0$.}\
\end{equation}
Next, we  fix  a compact set $K$ and consider two points $y_1 \in
K$, $y_2 \not\in C$ and any curve $\zeta$, defined in $[0,1]$,
linking them. We distinguish two cases according on whether the
intersection of $\zeta$ with  $C$ is nonempty or empty. In the first
instance we set
\begin{eqnarray}
 \label{dolo33} t_1 &=& \min \{t \in [0,1] \mid \zeta(t) \in C\} \\
  t_2 &=& \max \{t \in [0,1] \mid \zeta(t) \in C\}. \label{dolo34}
\end{eqnarray}
We denote by $R$ an upper bound of $|S|$ in $C \times C$ and
exploit  \eqref{dolo3} to get
\begin{eqnarray}
 \label{dolo4} \int_{0}^{1} \si(\zeta, \dot \zeta) \, \dd t &=&
  \int_0^{t_1} \si(\zeta, \dot \zeta)
  \, \dd t
  +\int_{t_1}^{t_2} \si(\zeta, \dot \zeta) \, \dd t+  \int_{t_2}^{1} \si(\zeta, \dot \zeta) \,
  \dd t\\
\nonumber &\geq&  |y_1 - \zeta(t_1)| + S(\zeta(t_1), \zeta(t_2)) +
 |y_2 - \zeta(t_2)| \\ \nonumber &\geq& -R + d(y_1,C) + d(y_2,C) .
\end{eqnarray}
If instead the curve $\zeta$ entirely lies outside $C$, we have by
\eqref{dolo3}
\begin{equation}\label{dolo5}
    \int_{0}^{1} \si(\zeta, \dot \zeta) \, \dd t \geq
|y_1-y_2|.
\end{equation}
In both cases we get item {\bf (i)} sending $y_2$ to infinity and
taking into account that $y_1$ has been arbitrarily chosen in $K$.

We finally see, looking at  \eqref{dolo4}, \eqref{dolo5}, and
slightly adapting the above argument that $K_0$, defined as in
\eqref{dolo40}, satisfies item {\bf (iii)}.
\end{proof}

\medskip

\begin{Remark} \label{remcortre} Given a  compact set $K \subset \R^M$,
the same argument of Proposition \ref{cortre} allows also proving
\begin{equation}\label{dolo6}
     \lim_{|y| \to + \infty} \inf_{y_0 \in K} S(y,y_0) = + \infty
\end{equation}
\end{Remark}

\medskip

\begin{Corollary}\label{corquattro}
For any  bounded open set $B$ there exists $R >0 $ such that if
$y_1$, $y_2$ belong to $B$ then all $1$--optimal curves for
$S(y_1,y_2)$ are contained in $B(0,R)$.
\end{Corollary}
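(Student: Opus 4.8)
The plan is to bound uniformly the $\si$-length of all $1$-optimal curves with endpoints in $B$, and then confine such curves using the two-sided coercivity of the critical intrinsic distance $S$ recorded in Proposition \ref{cortre} {\bf (i)} and Remark \ref{remcortre}. First I fix a ball $B(0,R_1)\supset B$. Since $H_0(y,\cdot)$ is coercive for every $y$ (Lemma \ref{coerci}) and $c_0$ is finite (Proposition \ref{critic}), the critical sublevels $Z(y)$ are contained in a fixed ball for $y$ ranging in $\ov{B(0,R_1)}$, so the support function satisfies $\si(y,v)\le \rho\,|v|$ there, for some $\rho=\rho(R_1)$. Evaluating the length functional on the segment joining two points of $B$ then gives a constant $M_0=M_0(B)$ with $S(y_1,y_2)\le M_0$ for all $y_1,y_2\in B$. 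Writing $M=M_0+1$, every $1$-optimal curve $\zeta\colon[0,1]\to\R^M$ for $S(y_1,y_2)$ with $y_1,y_2\in B$ thus has $\int_0^1\si(\zeta,\dot\zeta)\,\dd t\le M$.

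Next, let $\zeta$ be such a curve and let $t^\ast\in[0,1]$, $\bar y:=\zeta(t^\ast)$. Reparametrizing the sub-arcs $\zeta|_{[0,t^\ast]}$ and $\zeta|_{[t^\ast,1]}$ on $[0,1]$ — which leaves their lengths unchanged, $\si$ being positively homogeneous of degree one in the second variable — and combining additivity of the integral with the definition of $S$ as an infimum of lengths, I get
\[
S(y_1,\bar y)+S(\bar y,y_2)\;\le\;\int_0^1\si(\zeta,\dot\zeta)\,\dd t\;\le\;M.
\]
Now I set $K=\ov{B(0,R_1)}$. By Proposition \ref{cortre} {\bf (i)} there is $R'>0$ with $\inf_{y_0\in K}S(y_0,y)>M$ whenever $|y|\ge R'$, and by Remark \ref{remcortre} there is $R''>0$ with $\inf_{y_0\in K}S(y,y_0)\ge 0$ whenever $|y|\ge R''$. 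Hence, if $|\bar y|\ge R:=\max\{R',R''\}$, then $S(y_1,\bar y)+S(\bar y,y_2)>M$, a contradiction; therefore $\zeta$ stays in $B(0,R)$, with $R$ depending only on $B$.

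The content of the argument lives entirely in the already-established Proposition \ref{cortre} and Remark \ref{remcortre}; the only point needing care here is that $\si$, hence the intrinsic length, need not be nonnegative on the compact core where $0\notin Z(y)$ (cf. \eqref{dolo1}), so one may not simply say a sub-arc is shorter than the whole curve. Splitting $\zeta$ at the farthest point $\bar y$ and bounding each half below by the critical distance to the respective endpoint — then invoking coercivity of $S$ in \emph{both} of its arguments — is what bypasses this.
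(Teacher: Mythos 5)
Your argument is correct, and it follows the same contradiction scheme as the paper (a $1$--optimal curve leaving a large ball would have intrinsic length exceeding $S(y_1,y_2)+1$), but with a genuinely different decomposition. The paper first normalizes $B\supset K_0$, sets $P=\sup_{B\times B}|S|$, and splits the offending curve at an exterior point $t_1$ and at the first return time $t_2$ to $K_0$; the middle piece is handled by the pointwise bound $\si(y,v)\ge |v|\ge 0$ outside $K_0$ (Proposition \ref{cortre} {\bf (ii)}), and the last piece by $S(\zeta(t_2),y_2)\ge -P$, so only coercivity of $S$ in its second argument is invoked. You instead split at the farthest point $\bar y$ into two sub-arcs and bound each from below by an intrinsic distance, then use coercivity of $S$ in \emph{both} arguments, namely Proposition \ref{cortre} {\bf (i)} for $S(y_1,\bar y)$ and Remark \ref{remcortre} (of which you only need eventual nonnegativity) for $S(\bar y,y_2)$. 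This buys you two simplifications: you never need $K_0$, item {\bf (ii)}/{\bf (iii)}, or the sign of $\si$ along the curve — which, as you correctly note, is the delicate point, since $\si$ can be negative on the compact core — and you only need an upper bound for $S$ on $B\times B$ (which you derive directly from Lemma \ref{coerci} via $\si(y,v)\le\rho|v|$ on a ball containing $B$), rather than the two-sided bound $\sup_{B\times B}|S|$ used in the paper. The price is the extra appeal to Remark \ref{remcortre}, which is available at this point of the paper, so the proof is complete as written.
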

\begin{proof} We can assume without loosing generality that $B \supset K_0$, where
$K_0$ is the set defined in \eqref{dolo40}. We set
\[ P = \sup_{B \times B} |S|.\]
 By Proposition \ref{cortre} {\bf (i)} there is $R$ such that
\[ \inf_{y_0 \in B} S(y_0,y)  > 2 \, P   +2 \qquad\hbox{for  $y$ with $|y| > R$.}\]
We claim that such an $R$ satisfies the claim. In fact, assume by
contradiction that there are $y_1$, $y_2$ in $B$ and an $1$--optimal
curve $\zeta$, defined in $[0,1]$, for $S(y_1,y_2)$ not contained in
$B(0,R)$. Let $t_1$ be a time in $(0,1)$ with $\zeta(t_1) \not \in
B(0,R)$ and set
\[t_2 = \min\{ t \in (t_1,1) \mid \zeta(t) \in K_0 \subset B\}\]
 then, taking into account Proposition \ref{cortre}
\begin{eqnarray*}
 S(y_1,y_2) &\geq&  \int_{0}^{1} \si(\zeta, \dot \zeta) \, \dd t -1
\\ &=& \int_0^{t_1} \si(\zeta, \dot \zeta)
  \, \dd t
  +\int_{t_1}^{t_2} \si(\zeta, \dot \zeta) \, \dd t+  \int_{t_2}^{1} \si(\zeta, \dot \zeta) \,
  \dd t -1\\
 &\geq&  S(y_1, \zeta(t_1)) + S(\zeta(t_1), \zeta(t_2)) +
 S(\zeta(t_2),y_2) -1  \\ &\geq&
2 \, P + 2  - P -1 =  P+1,
\end{eqnarray*}
which is in contrast with the very definition of $P$.

\end{proof}

\medskip

\subsection{Existence of special subsolutions and solutions} \;

Here we show the existence of  bounded  critical subsolutions, and
of  coercive critical solutions.

\medskip

\begin{Proposition}\label{Hkuno} There exists a  bounded  Lipschitz--continuous critical
subsolution $u$, vanishing and strict outside the compact set $K_0$
defined as in \eqref{dolo40}.
\end{Proposition}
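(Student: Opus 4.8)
The plan is to exhibit $u$ as an intrinsic--distance function anchored at the complement of $K_0$. I would set $F=\R^M\setminus\interior K_0$ (a closed set containing $\R^M\setminus K_0$) and define
\[
u(y)=\inf_{z\in F}S(z,y)\qquad(y\in\R^M),
\]
where $S$ is the critical intrinsic distance attached to $H_0$ and $c_0$, and then check that this $u$ has all the required properties.

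First I would verify the subsolution property. From the triangle inequality for $S$,
\[
u(y_2)=\inf_{z\in F}S(z,y_2)\le\inf_{z\in F}\bigl(S(z,y_1)+S(y_1,y_2)\bigr)=u(y_1)+S(y_1,y_2),
\]
so $u(y_2)-u(y_1)\le S(y_1,y_2)$ for all $y_1,y_2$, which by Proposition \ref{criticone} (equivalence {\bf (i)}$\Leftrightarrow${\bf (ii)}, applied with $b=c_0$) exactly says that $u$ is a subsolution of $H_0=c_0$ on the whole of $\R^M$. To see that $u$ is finite I would invoke Remark \ref{remcortre}: for each fixed $y$, $S(z,y)\to+\infty$ as $|z|\to+\infty$, so the infimum defining $u(y)$ is effectively taken over a bounded portion of $F$, on which $S(\cdot,y)$ is continuous; then Lemma \ref{equili}, available because $H_0$ is coercive (Lemma \ref{coerci}), shows that the subsolution $u$ is locally Lipschitz--continuous.

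Next I would establish vanishing and strictness outside $K_0$; this is the step where the precise definition \eqref{dolo40} of $K_0$ is used. By Proposition \ref{cortre}{\bf (iii)} one has $S(z,y)>0$ whenever $z,y\in\R^M\setminus K_0$, hence by continuity $S(z,y)\ge 0$ for all $z,y\in F=\overline{\R^M\setminus K_0}$, and in particular $S(y,y)=0$ for $y\in F$. Therefore, for $y\in F$, all terms in the infimum defining $u(y)$ are $\ge 0$ while the term $z=y$ equals $0$; thus $u\equiv 0$ on $F$, and a fortiori $u$ vanishes on $\R^M\setminus K_0$. On the open set $\R^M\setminus K_0$ the function $u$ is locally constant, so the gradient at $y_0\notin K_0$ of any $C^1$ supertangent to $u$ is $0$; since $K_0\supset C$, \eqref{dolo1} gives $H_0(y_0,0)<c_0-Q_0<c_0$, and therefore $u$ is a strict subsolution outside $K_0$. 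Finally, being a critical subsolution $u$ is locally Lipschitz, and being identically zero off the compact set $K_0$ it is bounded and in fact globally Lipschitz; this completes the proof.

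The only genuinely delicate point, I expect, is the vanishing identity $u\equiv 0$ on $F$: it rests on the sign property of $S$ outside $K_0$ from Proposition \ref{cortre}{\bf (iii)} together with its passage to the closure, which is exactly what the choice \eqref{dolo40} of $K_0$ was designed to yield. Everything else --- the subsolution property across $\partial K_0$ and inside $K_0$, finiteness, local and global Lipschitz regularity, boundedness, and strictness --- is routine given the metric machinery of Appendix \ref{appendice}.
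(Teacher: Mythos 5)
Your construction $u(y)=\inf_{z\in F}S(z,y)$ with $F=\R^M\setminus\interior K_0=\overline{\R^M\setminus K_0}$ is exactly the paper's, and your verifications (triangle inequality for the subsolution property, nonnegativity of $S$ on $F\times F$ from Proposition \ref{cortre} {\bf (iii)} for the vanishing, $H_0(y,0)<c_0-Q_0$ for strictness, global Lipschitz continuity from vanishing off a compact set) coincide with the paper's argument, which merely packages the first two steps as an application of Proposition \ref{subsubsub} {\bf (iii)}. The proof is correct and follows essentially the same route.
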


\begin{proof}   By Proposition \ref{cortre}, item {\bf (iii)}
\begin{equation}\label{Hkuno1}
   S(y_1,y_2) \geq 0 \qquad\hbox{for any $y_1$, $y_2$ in $\ov{\R^M \setminus
K_0}$,}
\end{equation}
and consequently  the null function is an admissible trace for subsolutions to
$H_0=c_0$ on $\ov{\R^M \setminus K_0}$ in the sense of Proposition \ref{subsubsub} {\bf (iii)},
so that owing to Proposition \ref{subsubsub} {\bf (iii)}
\[u(y) := \inf\{ S(z,y) \mid z \in \ov{\R^M \setminus
K_0}\} \] is a subsolution to $H_0=c_0$ in $\R^M$  vanishing on
$\ov{\R^M \setminus K_0}$, in addition
\[ H_0(y,Du)= H_0(y,0) < c_0 - Q_0 \qquad\hbox{for $ y \in \R^M
\setminus K_0 \subset \R^M \setminus C$}\] by the very definition of
$C$ in \eqref{dolo1}.  Since  $u$ is locally Lipschitz--continuous
by Lemma  \ref{coerci} and vanishes outside a compact set, it is
actually  globally Lipschitz--continuous in $\R^M$. This fully shows
the assertion.

\end{proof}

\medskip

We denote by $\A_0$ the Aubry set of $H_0$, see Proposition \ref{OhAubry}  for the definition.
We have:

\smallskip

\begin{Lemma} \label{lemHkdue} The Aubry set $\A_0$ is nonempty and
contained in $K_0$, where $K_0$ is defined as in \eqref{dolo40}.
\end{Lemma}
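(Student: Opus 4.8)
The plan is to prove the two assertions separately, both relying on the coercivity machinery already set up. For nonemptiness of $\A_0$, I would argue by contradiction using the characterization of the Aubry set via critical subsolutions (cf.\ the appendix): if $\A_0 = \emptyset$, then every point of $\R^M$ admits a strict critical subsolution near it, and by a standard partition-of-unity / patching argument (as in weak KAM theory) one can then produce a \emph{global} strict critical subsolution $w$ of $H_0 = c_0$. Combining $w$ with the bounded subsolution $u$ from Proposition \ref{Hkuno} — which is itself strict outside $K_0$ — via a suitable convex combination $\theta\, u + (1-\theta)\, w$ would yield a subsolution that is strict on all of $\R^M$ (strict outside $K_0$ because $u$ is, strict inside $K_0$ because $w$ is). By Lemma \ref{coerci} this subsolution is locally Lipschitz; being strict everywhere, a small downward perturbation of the critical value is still admissible, contradicting the definition $c_0 = \ov H(x_0,p_0)$ as the \emph{infimum} of levels supporting a subsolution. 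Hence $\A_0 \neq \emptyset$.

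For the inclusion $\A_0 \subset K_0$, I would use the characterization of points \emph{not} in the Aubry set: $y \notin \A_0$ precisely when there is a critical subsolution strict in a neighbourhood of $y$. So it suffices to exhibit, for each $y \in \R^M \setminus K_0$, a critical subsolution that is strict near $y$. But the subsolution $u$ constructed in Proposition \ref{Hkuno} already does this: it vanishes on $\ov{\R^M \setminus K_0}$ and satisfies $H_0(y, Du) = H_0(y,0) < c_0 - Q_0 < c_0$ there, by the defining property \eqref{dolo1} of $C$ (recall $\R^M \setminus K_0 \subset \R^M \setminus C$). Thus $u$ is strict at every point outside $K_0$, so every such point lies off the Aubry set, giving $\A_0 \subset K_0$.

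The main obstacle I anticipate is the nonemptiness half, specifically justifying the passage from ``locally strict subsolutions exist everywhere'' to ``a globally strict subsolution exists'' in the noncompact setting. The usual patching argument produces a global subsolution but a priori only on compact pieces with strictness that may degrade at infinity; here one must use the already-established coercivity of $S$ (Proposition \ref{cortre} \textbf{(i)}, Remark \ref{remcortre}) and the fact that $u$ is \emph{bounded} and strict at infinity to control the combination $\theta u + (1-\theta) w$ uniformly, ensuring the perturbed level stays below $c_0$ globally. An alternative, perhaps cleaner route: invoke directly that when $\A_0 = \emptyset$ the critical equation $H_0 = c_0$ would then behave like a supercritical equation, for which the infimum in \eqref{effe} could not be attained — again contradicting minimality of $c_0$, whose finiteness is Proposition \ref{critic}. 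I would lean on whichever of these is cleanest given the appendix's precise formulation of the Aubry set.
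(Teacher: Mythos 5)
Your proposal is correct and follows essentially the same route as the paper: the inclusion $\A_0\subset K_0$ via Proposition \ref{Hkuno} and Proposition \ref{OhAubry} \textbf{(iii)}, and nonemptiness by contradiction through convex combinations of locally strict critical subsolutions, with the bounded subsolution $u$ (uniformly strict outside $K_0$) handling the unbounded region. The obstacle you flag about strictness degrading at infinity is resolved exactly as the paper does it: since only the compact set $K_0$ needs covering, a finite subcover yields finitely many strictness margins, so the convex combination is uniformly strict on all of $\R^M$, contradicting the minimality of $c_0$.
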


\begin{proof}  We  know from Proposition \ref{Hkuno}
that there is a critical subsolution  which is strict outside $K_0$,
so that by Proposition \ref{OhAubry} {\bf (iii)}  $\A_0 \subset
K_0$. The point is then to show that the Aubry set is nonempty.

We argue by contradiction using a covering argument. If $\A_0 =
\emptyset$, then we can associate by Proposition \ref{OhAubry} {\bf
(iii)}  to any point $y \in K_0$ an open neighborhood $B_y$, a value
$d_y < c_0$, and a critical subsolution $w_y$ with
\[H_0(\cdot,Dw_y) \leq d_y < c_0  \quad\hbox{in $B_y$.}\]
We extract a finite subcovering  $\{B_1, \cdots, B_m\}$
corresponding to points $y_1, \cdots, y_m$  of $K_0$, and set
\begin{eqnarray*}
  w_j &=& w_{y_j} \\
  d_j &=& d_{y_j} \qquad{for\quad j=1, \cdots, m}.
\end{eqnarray*}
 Then
\[ \{ B_0, B_1, \cdots, B_m\}, \]
where $B_0 = \R^M \setminus K_0$, is an finite open cover of $\R^M$.
We denote by $u$ the critical subsolution constructed in Proposition
\ref{Hkuno} and set $d_0= c_0 - Q_0$, so that
\[H_0(y,Du(y)) \leq d_0 < c_0 \qquad \hbox{for  any $y \in
B_0$.}\]  We  define
\[w =  \lambda_0 \, u + \sum_{i=1}^m \lambda_j \, w_j,\]
 where $\lambda_0,\lambda_1, \cdots, \lambda_m$ are positive coefficients
summing to $1$. We have by convexity of $H_0$
 \[ H_0(y,Dw(y)) \leq  \la_0 \, H_0(y,Du(y))
+ \sum_{j=1}^m \lambda_j \, H_0(y,Dw_j(y)),\] for a.e. $y \in \R^M$,
and we derive
\[ H_0(y,Dw(y)) \leq \sum_{i\neq j}\lambda_i \, c_0 + \lambda_j \, d_j =
(1 - \lambda_j) \, c_0 + \lambda_j \, d_j= c_0 + \lambda_j \, (d_j -
c_0)\] for a.e. $ y \in B_j$, $j=0, \cdots, m$. We set $\widetilde d
= \max_j \lambda_j \, (d_j - c_0) < 0 $ and conclude
\[H_0(y,Dw(y)) \leq  c_0 + \widetilde d < c_0 \quad\hbox{for a.e. $y \in \R^M$,}\]
which is impossible by the very definition of $c_0$.  This gives  by
contradiction $\emptyset \neq \A_0 \subset K_0$, as desired.

\end{proof}

\medskip

From the previous lemma and Proposition \ref{cortre}, item {\bf (i)}
we get:

\medskip

\begin{Proposition}\label{Hkdue}
All the functions $y \mapsto S(y_0,y)$, for $y_0 \in \A_0$, are
coercive critical solutions.
\end{Proposition}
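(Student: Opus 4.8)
The plan is to combine the two structural facts just established — that $\A_0$ is nonempty and compact (Lemma \ref{lemHkdue}) and that the critical distance $S$ is coercive in the sense of Proposition \ref{cortre} {\bf (i)} — with the general Weak KAM machinery recalled in the appendix, in particular Proposition \ref{OhAubry}, which should contain the statement that for every $y_0 \in \A_0$ the function $y \mapsto S(y_0,y)$ is a critical solution (i.e. a global viscosity solution of $H_0 = c_0$ in all of $\R^M$). So the only thing left to check is that this function is moreover \emph{coercive}, i.e. that it tends to $+\infty$ as $|y| \to +\infty$.

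First I would fix $y_0 \in \A_0 \subset K_0$ and recall from the appendix the two basic properties of the intrinsic distance: the triangle inequality $S(y_0,y) \leq S(y_0,z) + S(z,y)$ for all $y,z$, and the fact that $S$ is bounded on bounded sets (it is finite and continuous). Taking $K = \{y_0\}$ (or any compact set containing $\A_0$, e.g. $K_0$) in Proposition \ref{cortre} {\bf (i)}, we have $\inf_{z \in K_0} S(z,y) \to +\infty$ as $|y| \to +\infty$; in particular $S(y_0,y) \to +\infty$, which is exactly coercivity. This is essentially immediate once the ingredients are in place — the genuinely substantive work was already done in Lemma \ref{lemHkdue} and Proposition \ref{cortre}.

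That $y \mapsto S(y_0,y)$ is a critical \emph{solution} (not merely a subsolution) for $y_0$ in the Aubry set is the characterizing property of $\A_0$ in Weak KAM theory: it is a subsolution everywhere by Proposition \ref{criticone} {\bf (ii)} (since $S$ satisfies the triangle inequality, the function $u(\cdot) = S(y_0,\cdot)$ verifies $u(y_2) - u(y_1) \leq S(y_1,y_2)$), and it is a supersolution away from $y_0$ by the standard eikonal/metric argument, while the supersolution property \emph{at} $y_0$ is precisely what membership in the Aubry set guarantees — this is the content of the cited Proposition \ref{OhAubry}. So the proof amounts to one line invoking Proposition \ref{OhAubry} for the "solution" part and one line invoking Proposition \ref{cortre} {\bf (i)} together with the triangle inequality for the "coercive" part.

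The only point requiring a little care — and the closest thing here to an obstacle — is making sure the appendix really delivers the solution property for \emph{every} $y_0 \in \A_0$ and not just for one distinguished point, and that the coercivity estimate in Proposition \ref{cortre} {\bf (i)} is applied with a compact set $K$ that actually contains $y_0$; taking $K = K_0 \supset \A_0$ (from \eqref{dolo40}) handles this uniformly. With those bookkeeping matters settled, the conclusion follows directly, so I would write the proof as a short two-sentence deduction citing Lemma \ref{lemHkdue}, Proposition \ref{cortre} {\bf (i)}, and the relevant item of Proposition \ref{OhAubry}.
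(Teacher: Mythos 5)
Your argument is correct and is exactly the paper's route: the paper states this proposition as an immediate consequence of Lemma \ref{lemHkdue} (so that $\A_0\neq\emptyset$ and $\A_0\subset K_0$, hence $S(y_0,\cdot)$ is a global critical solution by the Aubry-set characterization in Proposition \ref{OhAubry} {\bf (ii)}) together with Proposition \ref{cortre} {\bf (i)} for coercivity. Your extra remarks on the subsolution/triangle-inequality side are harmless but redundant, since Proposition \ref{OhAubry} {\bf (ii)} already delivers the full solution property for every point of $\A_0$.
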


\medskip

The previous line of reasoning can be somehow reversed.  We proceed
showing that the existence of coercive solutions, plus the
coercivity of intrinsic distance, characterizes the critical
equation and also directly implies that the Aubry set is nonempty,
as made precise by the following result:

\smallskip

\begin{Proposition}\label{singa} Assume that the equation
\[H_0(y,Du)= b\]
admits a coercive solution in $\R^M$ and limit relation
\eqref{dolo6} holds true with $S_b$ in place of $S$, then $b= c_0$
and the corresponding Aubry set is nonempty.
\end{Proposition}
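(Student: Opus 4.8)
The plan is to argue in two stages: first that $b \le c_0$ follows from the mere existence of a subsolution (which the hypothesized coercive solution provides), and then that $b \ge c_0$ by showing that, under the coercivity assumptions, a coercive solution to $H_0 = b$ produces a nonempty Aubry set, which in turn forbids the strict inequality $b > c_0$. The second stage is where the substance lies.

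First I would note that a coercive solution to $H_0(y,Du) = b$ is in particular a subsolution, so by the very definition of $c_0$ as $\ov H(x_0,p_0)$ in \eqref{effe}, we get $b \ge c_0$... wait, the inequality goes the right way: $c_0$ is the \emph{infimum} of the $b$ admitting a subsolution, so existence of a subsolution at level $b$ gives $b \ge c_0$. For the reverse inequality $b \le c_0$, suppose by contradiction $b > c_0$. Then $b$ is supercritical, and I would invoke the metric/Weak KAM machinery from the appendix exactly as it was used in Lemma~\ref{lemHkdue}: the coercivity of $S_b$ via \eqref{dolo6} (assumed here with $S_b$ replacing $S$) together with the sign properties analogous to Proposition~\ref{cortre} should force the Aubry set $\A_b$ of the equation $H_0 = b$ to be nonempty and compact. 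But the Aubry set of a supercritical level is empty — this is a standard fact of Weak KAM theory (for $b > c_0$ every critical subsolution can be made strict everywhere, cf. Proposition~\ref{OhAubry}), contradiction. Hence $b = c_0$, and then the nonemptiness of the Aubry set is precisely what the argument just produced (or, once $b = c_0$ is known, it follows from Lemma~\ref{lemHkdue} directly).

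More carefully, the key steps in order: (1) observe the coercive solution is a subsolution, giving $b \ge c_0$; (2) to get $b \le c_0$, assume $b > c_0$ and use the existence of the coercive solution to $H_0 = b$ plus the coercivity of $S_b$ to run the covering argument of Lemma~\ref{lemHkdue} in reverse — specifically, show that if $\A_b = \emptyset$ one could build a subsolution to $H_0 = b - \delta$ for some $\delta > 0$, contradicting that $c_0 < b$ would then not be the infimum... actually the cleaner route: show $\A_b \ne \emptyset$ from the coercive solution (the coercive solution $y \mapsto S_b(y_0,y)$-type representation combined with \eqref{dolo6} pins a minimizing "recurrent" point into a compact set, which lies in $\A_b$), and separately recall $\A_b = \emptyset$ whenever $b > c_0$; (3) conclude $b = c_0$ and transport the nonemptiness of $\A_b = \A_0$ to the statement.

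The main obstacle I anticipate is step (2): making rigorous the claim that a coercive solution at level $b$ forces $\A_b \ne \emptyset$. The natural argument mimics the proof that $\A_0 \ne \emptyset$ in Lemma~\ref{lemHkdue}, where nonemptiness came from the impossibility of a global strict subsolution; but here one must instead use the coercive solution together with \eqref{dolo6} to locate a point where no strict subsolution can exist — essentially showing that a solution (not merely a subsolution) that is coercive must "lose strictness" somewhere inside a compact set, and that point belongs to the Aubry set. One should be careful that the argument uses solution-ness, not just subsolution-ness, of the coercive function, since otherwise the conclusion would be false (bounded subsolutions at supercritical levels do exist, as Proposition~\ref{Hkuno} and the surrounding discussion make clear). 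The coercivity hypothesis \eqref{dolo6} on $S_b$ is exactly what prevents the relevant minimizing configuration from escaping to infinity, so it must be used crucially at that point.
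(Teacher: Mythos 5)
Your outline has the right global shape (contradiction, with the coercivity of $w$ and of $S_b$ as the ingredients that must clash), and you correctly isolate that the solution property, not mere subsolution property, has to be used. But there is a genuine gap precisely at the step you yourself flag as "the main obstacle": you never prove that a coercive solution cannot exist at a level $b>c_0$ (nor do you treat the case $b=c_0$ with empty Aubry set, which the statement also excludes). Your proposed route --- show the coercive solution forces $\A_b\neq\emptyset$ while $\A_b=\emptyset$ whenever $b>c_0$ --- cannot be completed by "running the covering argument of Lemma \ref{lemHkdue} in reverse": that argument produces a contradiction only because a globally strict subsolution at level $c_0$ violates the minimality of $c_0$, whereas at level $b>c_0$ a globally strict subsolution exists (any critical subsolution), so no contradiction is available and indeed $\A_b$ is empty. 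What remains is exactly the assertion that a coercive solution together with \eqref{dolo6} "loses strictness somewhere in a compact set"; this is the whole content of the proposition, and your proposal offers only a heuristic for it ("pins a minimizing recurrent point into a compact set").

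The paper closes this gap with a concrete mechanism you do not invoke: uniqueness and the representation formula for Dirichlet problems in large balls. If $b\neq c_0$ or $\A_0=\emptyset$, then in every ball $B(0,R)$ there is a subsolution of $H_0=b$ which is strict there (a critical subsolution when $b>c_0$; Corollary \ref{corohAubry} when $b=c_0$ and $\A_0=\emptyset$), so Proposition \ref{postohAubry} applies to the Dirichlet problem in $B(0,R)$ with boundary datum $w$: its unique solution is $y\mapsto\inf\{w(z)+S_b(z,y)\mid z\in\partial B(0,R)\}$, and since $w$ itself is such a solution, one gets $w(0)=w(z_R)+S_b(z_R,0)$ for some $z_R$ with $|z_R|=R$. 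Letting $R\to\infty$, both terms on the right blow up, by coercivity of $w$ and by \eqref{dolo6} for $S_b$, which is absurd; this one argument rules out both bad cases simultaneously, so nonemptiness of $\A_0$ comes out of the same contradiction. (Your parenthetical fallback --- quoting Lemma \ref{lemHkdue} for nonemptiness once $b=c_0$ is known --- is legitimate under the paper's standing assumptions, but it does nothing for the unproven inequality $b\le c_0$; your easy step $b\ge c_0$ and your remark that subsolution-ness alone could not suffice are both correct.)
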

\begin{proof} The argument is by contradiction. Let $w$ be a coercive solution of the equation in
object.   If $b \neq c_0$ or $\A_0 = \emptyset$ then by Corollary
\ref{corohAubry}, Proposition \ref{postohAubry}, there is, for any
$R
>0$,  an unique solution   of the Dirichlet  problem
\[\left \{\begin{array}{cc}
    H_0(y,Du)=  & b \qquad\hbox{in $B(0,R)$} \\
    u = &  \; \; \; \, w  \qquad\hbox{on $\partial B(0,R)$}\\
  \end{array} \right . \]
  which therefore  must coincide with $w$, and
  \begin{equation}\label{singa1}
   w(0)= w(z) + S_b(z,0)  \qquad\hbox{for any $R >0$, some $z \in \partial B(0,R)$.}
\end{equation}
Since we have assumed  \eqref{dolo6}, with $S_b$ in place of $S$, we
have
\[\lim_{|z| \to + \infty} S_b(z,0)= + \infty\]
and by assumption $w$ is coercive. This shows that  \eqref{singa1}
is impossible, and concludes the proof.
\end{proof}

\smallskip

We derive:

\begin{Proposition} The effective Hamiltonian $\ov H: \R^N \times \R^N \to
\R$ is continuous in both components and convex in $p$.
\end{Proposition}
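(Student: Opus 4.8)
The plan is to prove the two assertions separately, treating convexity as the easy part and joint continuity as two one-sided estimates. Throughout, recall that $\ov H$ is finite everywhere by Proposition \ref{critic} (applied at each point), that for fixed $(x,y)$ the map $(p,q)\mapsto H(x,y,p,q)$ is a pointwise maximum of affine functions and hence convex, and that by Lemma \ref{coerci} subsolutions of the cell equations are locally Lipschitz, so that "viscosity subsolution" and "a.e.\ subsolution" coincide (the nontrivial direction using convexity in $q$).

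\emph{Convexity in $p$.} Fix $x\in\R^N$, $p_1,p_2\in\R^N$, $\la\in[0,1]$, and pick any $b_i>\ov H(x,p_i)$; by \eqref{effe} there is a subsolution $u_i$ of $H(x,y,p_i,Du_i)=b_i$, which I take in the a.e.\ sense. By joint convexity of $H$ in $(p,q)$, the combination $u:=\la u_1+(1-\la)u_2$ satisfies $H\big(x,y,\la p_1+(1-\la)p_2,Du(y)\big)\le \la b_1+(1-\la)b_2$ for a.e.\ $y$, hence is a viscosity subsolution of the corresponding equation. Therefore $\ov H\big(x,\la p_1+(1-\la)p_2\big)\le \la b_1+(1-\la)b_2$, and letting $b_i\downarrow \ov H(x,p_i)$ yields convexity.

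\emph{Lower semicontinuity.} I would argue by contradiction: suppose $(x_n,p_n)\to(x_0,p_0)$ with $\ov H(x_n,p_n)\to L< c_0:=\ov H(x_0,p_0)$, fix $b_\infty$ with $L<b_\infty<c_0$, and choose for $n$ large subsolutions $u_n$ of $H(x_n,y,p_n,Du_n)=b_n$ with $b_n<b_\infty$. Since $(x_n,p_n)$ stays bounded, Lemma \ref{astima} provides, for each compact $K\subset\R^M$, a radius $R_K$ with $H(x_n,y,p_n,q)>c_0$ whenever $y\in K$, $|q|>R_K$; as $b_n<b_\infty<c_0$, this forces $|Du_n|\le R_K$ a.e.\ on $K$, so after normalising $u_n(0)=0$ the $u_n$ are locally equi-Lipschitz and equibounded. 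Extracting a locally uniformly convergent subsequence and invoking stability of viscosity subsolutions — using also $H(x_n,\cdot,p_n,\cdot)\to H(x_0,\cdot,p_0,\cdot)$ locally uniformly, which follows from \eqref{compara}--\eqref{comparabis} — the limit is a subsolution of $H(x_0,y,p_0,Du)=b_\infty<c_0$, contradicting the minimality of $c_0$ in \eqref{effe}. Hence $\ov H$ is lower semicontinuous.

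\emph{Upper semicontinuity, the main point.} Here I would use the bounded Lipschitz critical subsolution $u$ of Proposition \ref{Hkuno} for $(x_0,p_0)$: it vanishes, and is strict, outside the compact set $K_0$ of \eqref{dolo40}, and, vanishing off $K_0$ while Lipschitz on $K_0$, has globally bounded gradient. Applying coercivity of the running cost {\bf (H4)} with $B=\ov{B(x_0,1)}$ — so that $\min_{a\in A}\ell(x,y,a)\to+\infty$ uniformly for $x\in B$ — I enlarge $K_0$ to a compact $\widetilde K\supseteq K_0$ so large that $H(x,y,p,0)\le |p|\,Q_0-\min_{a\in A}\ell(x,y,a)<c_0$ for every $y\notin\widetilde K$ and every $(x,p)$ with $|x-x_0|+|p-p_0|<1$. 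On the compact set $\widetilde K$, where $Du$ is bounded, estimates \eqref{compara}--\eqref{comparabis} give a modulus $\om_0$ (depending on $x_0,p_0$, on $\widetilde K$ and on the Lipschitz constant of $u$) with $H(x,y,p,Du(y))\le H(x_0,y,p_0,Du(y))+\om_0(|x-x_0|+|p-p_0|)\le c_0+\om_0(|x-x_0|+|p-p_0|)$ a.e.\ on $\widetilde K$, since $u$ is critical at $(x_0,p_0)$. Combining the two regimes, $u$ is an a.e.\ — hence viscosity — subsolution of $H(x,y,p,Du)=c_0+\om_0(|x-x_0|+|p-p_0|)$ in $\R^M$, so $\ov H(x,p)\le \ov H(x_0,p_0)+\om_0(|x-x_0|+|p-p_0|)$; together with lower semicontinuity this proves continuity. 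I expect this paragraph to be the obstacle: because the modulus of continuity of $\ell$ in the slow variable is \emph{not} assumed uniform in the non-compact fast variable, one cannot estimate $H(x,y,p,Du(y))$ directly on all of $\R^M$, and the trick is to confine the delicate region to a compact set by pairing the strictness of the corrector $u$ outside $K_0$ with the coercivity {\bf (H4)}.
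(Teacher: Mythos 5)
Your proof is correct, but it follows a genuinely different route from the paper's. The paper proves continuity in one stroke: along a sequence $(x_n,p_n)\to(x,p)$ it takes the \emph{coercive solutions} $S(y_0,\cdot)$ of Proposition \ref{Hkdue} at the exact levels $\ov H(x_n,p_n)$ (whose existence rests on the nonempty Aubry set, Lemma \ref{lemHkdue}), extracts a locally uniform limit by Ascoli, and then invokes Proposition \ref{singa} — the characterization of the critical value through existence of a coercive solution together with coercivity of the intrinsic distance — to identify the limit level with $\ov H(x,p)$; convexity is handled as you do. You instead split continuity into two one-sided estimates: lower semicontinuity by a compactness/stability argument using only \emph{subsolutions}, the coercivity of $H$ from Lemma \ref{astima} and the infimum definition \eqref{effe}, and upper semicontinuity by perturbing the fixed bounded critical subsolution of Proposition \ref{Hkuno}, exploiting that it vanishes outside $K_0$ so that beyond a larger compact set the inequality $H(x,y,p,0)\le |p|\,Q_0-\min_a\ell(x,y,a)<c_0$ follows directly from {\bf (H4)}, while on the compact part \eqref{compara}--\eqref{comparabis} apply; you correctly identified that the modulus of $\ell$ in $x$ is not uniform in $y$, which is precisely why the two-regime argument is needed, and your handling of it is sound (note that what you actually use outside $K_0$ is $Du\equiv 0$ plus {\bf (H4)}, i.e.\ you re-derive the strictness rather than quote it — this is exactly how \eqref{dolo1} produces it, so no harm). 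The trade-off: the paper's argument is shorter given the weak KAM machinery already in place and stays within the theme of ``limits of critical coercive solutions remain critical''; yours avoids the Aubry set and Proposition \ref{singa} altogether, is more elementary and self-contained, and yields as a bonus an explicit local one-sided modulus $\ov H(x,p)\le \ov H(x_0,p_0)+\om_0(|x-x_0|+|p-p_0|)$, at the cost of invoking (as the paper also implicitly does elsewhere, e.g.\ in Lemma \ref{lemHkdue}) the equivalence of a.e.\ and viscosity subsolutions for convex coercive Hamiltonians and a stability argument with varying Hamiltonians.
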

\begin{proof} It is easy to see using the continuity of $H$ and the argument in the
proof of Proposition \ref{critic} that $\ov H$ is locally bounded.
We consider a sequence $(x_n,p_n)$ converging to some $(x,p)$, and
assume that $\ov H(x_n,p_n)$ admits limit. We consider a sequence
$v_n$ of solutions to
\[H(x_n,y,p_n,Du) = \ov H(x_n,p_n)\]
of the form as in Proposition \ref{Hkdue}. By exploiting the
continuity of $H$ we see that the $v_n$ are locally
equiLipschitz--continuous, locally equibounded and equicoercive.
They are consequently locally uniformly convergent, up to a
subsequence, by Ascoli Theorem,  with limit function, say $w$,
locally Lipschitz-- continuous and coercive. In addition,  by basic
stability properties of viscosity solutions theory, $w$ satisfies
\[H(x,y,p,Dw)= \lim_n \ov H(x_n,p_n),\]
which  implies by Proposition \ref{singa} that $\lim_n \ov
H(x_n,p_n)= \ov H(x,p)$. This shows the claimed continuity of $\ov
H$.

We see by the very definition of $H$ that
\[H(x,y,\la \, p_1 + (1- \la) \, p_2, \la \, q_1 + (1- \la) \, q_2)
\leq  \la \, H(x,y,p_1,q_1)+ (1- \la) \,  H(x,y,p_2,q_2).\] We
derive from this that if $u_i$, $i=1,2$, satisfy $H(x,y,p_i,Du_i)
\leq \ov H(x,p_i)$ in  the viscosity sense, then
\[ H(x,y,\la \, p_1 + (1- \la) \, p_2, \la \, Du_1 + (1- \la) \, D u_2)
\leq  \la \, \ov H(x,p_1)+ (1- \la) \,  \ov H(x,p_2),\] which in
turn implies
\[\ov H(x, \la \, p_1 + (1- \la) \, p_2) \leq \la \, \ov H(x,p_1)+ (1- \la) \,  \ov
H(x,p_2)\] as desired.

\end{proof}

\medskip

\subsection{Construction of a supersolution} \; \label{construct}
 We sill keep $(x_0,p_0)$ fixed. Starting from Proposition  \ref{Hkdue}, we construct
  a supersolution
 of the cell problem which will play the role of corrector in Theorem \ref{mainuno}.
 We denote by $K_0$ the set defined in \eqref{dolo40}. We fix $y_0 \in \A_0$; by  the coercivity
 of $S(y_0,\cdot)$, see Proposition \ref{Hkdue}, there is a constant $d$ such that
 \begin{equation}\label{constrasse0}
    d + S(y_0,y) >0 \qquad\hbox{for any $y \in \R^M$.}
\end{equation}
We select a constant $R_0$ satisfying
\begin{eqnarray}
   &B(0,R_0 - 3 ) \supset K_0& \label{constrassu1}  \\
   &\hbox{$R_0 - 3$ satisfies Corollary \ref{corquattro} for a neighborhood of
   $y_0$.}& \label{constrassu11}\end{eqnarray}

\medskip

 We aim at proving:

\smallskip

\begin{Theorem}\label{constrmain} Let  $U: \R^M \to \R$ be  a function   bounded
 from above  in $\ov {B}(0,R_0)$ with
\begin{equation}
  U \leq  0 \qquad\hbox{in $B(0,R_0-1)$, } \label{constrassu2}
\end{equation}
 then there exists for
 any $\la >0$,   a  locally Lipschitz--continuous supersolution
$w_\la$ of $H_0 = c_0$  in $\R^M$ with
\begin{eqnarray}
  U &\leq& \la \, w_\la \,\qquad\quad\qquad\hbox{in $\ov{B}(0,R_0)$} \label{constrasse1} \\
  w_\la &=& d + S(y_0, \cdot)  \,\;\qquad\hbox{in a neighborhood of $y_0$.} \label{constrasse2}
\end{eqnarray}
\end{Theorem}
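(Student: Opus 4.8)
The idea is to build $w_\lambda$ by combining a large multiple of the coercive critical solution $S(y_0,\cdot)$, which already satisfies the prescribed behavior near $y_0$, with a sufficiently large constant, so as to dominate $\lambda^{-1}U$ on the compact ball $\ov B(0,R_0)$, while making sure the gluing of the two pieces still produces a supersolution. Concretely, set $v := d + S(y_0,\cdot)$; by \eqref{constrasse0} this is a strictly positive coercive critical solution, in particular a supersolution of $H_0 = c_0$ in $\R^M$. On $B(0,R_0-1)$ we have $U \leq 0 \leq \lambda v$ by \eqref{constrassu2} and positivity, so the constraint \eqref{constrasse1} is automatic there; the issue is only on the annular region $\ov B(0,R_0)\setminus B(0,R_0-1)$, where $U$ is bounded from above, say by a constant $C_0$, and where $v$ is bounded below by a positive constant $\delta$ (coercivity and continuity on a compact set).

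The first step is therefore to fix, for given $\lambda>0$, a large constant $M_\lambda$ such that $M_\lambda + \lambda v \geq U$ on all of $\ov B(0,R_0)$; any $M_\lambda \geq C_0$ works since $\lambda v\geq 0$. The second step is to produce, from $v$ and the shifted function $M_\lambda + v$ (rescaled appropriately), a single globally defined supersolution that coincides with $v$ near $y_0$ and with the shifted, larger function on the annulus. The natural device is a minimum: $\min$ of two supersolutions of a convex (indeed any) Hamilton--Jacobi equation is again a supersolution. So I would try $w_\lambda := \frac1\lambda\,\min\{\lambda v + M_\lambda,\ \lambda v + (\text{something that forces the near-}y_0\text{ value})\}$ — but this is circular, so instead the right construction is to take $w_\lambda$ to be the solution of a Dirichlet problem on $B(0,R_0)$ with the large boundary trace, patched with $v$ outside, using Corollary \ref{corquattro} to control the optimal curves. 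Let me make this precise: let $b_\lambda := M_\lambda/\lambda$ and consider the function
\[
\widehat w := \min\{\, v,\ b_\lambda + \inf_{z\in \ov B(0,R_0)\setminus B(0,R_0-2)} S(z,\cdot)\,\},
\]
which is a critical subsolution... no — we need a \emph{super}solution, so the correct object is built from $S(\cdot,y)$-type representation. The cleanest route is: define $w_\lambda$ as the unique solution of $H_0 = c_0$ in $B(0,R_0-1)$ with boundary data $v|_{\partial B(0,R_0-1)}$ (which is just $v$ itself by uniqueness, cf. Corollary \ref{corohAubry}), and on $\R^M\setminus B(0,R_0-1)$ set $w_\lambda := \min\{v + b_\lambda,\ $[coercive extension]$\}$; since $v$ already solves the equation globally, simply take
\[
w_\lambda := \tfrac1\lambda\Big(\min\{\lambda v + M_\lambda,\ \lambda v\}\Big) \text{ near } y_0,\quad \lambda v + M_\lambda \text{ elsewhere},
\]
i.e. $w_\lambda = v$ in a neighborhood of $y_0$ and $w_\lambda = v + M_\lambda/\lambda$ far out, interpolated by a $\min$ of two supersolutions; since both $v$ and $v+M_\lambda/\lambda$ are supersolutions of $H_0=c_0$ and the constant cutoffs are chosen so the $\min$ picks out $v$ only on a neighborhood of $y_0$ contained in $B(0,R_0-3)$ (using \eqref{constrassu1}, \eqref{constrassu11} and Corollary \ref{corquattro} to guarantee the switching locus stays well inside), the minimum is a global locally Lipschitz supersolution satisfying \eqref{constrasse2}, and \eqref{constrasse1} holds because on $\ov B(0,R_0)$ we have $w_\lambda \geq v \geq \lambda^{-1}(\lambda v) $ and, where $w_\lambda = v$, this region lies in $B(0,R_0-1)$ so $U\leq 0\leq \lambda w_\lambda$, while where $w_\lambda = v + M_\lambda/\lambda$ we get $\lambda w_\lambda = \lambda v + M_\lambda \geq M_\lambda \geq C_0 \geq U$.

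**Main obstacle.** The delicate point is the gluing: I must choose the additive constant $M_\lambda$ (equivalently the level at which the two supersolutions cross) so that the region where the $\min$ equals the ``inner'' function $v$ is exactly a prescribed neighborhood of $y_0$ sitting strictly inside $B(0,R_0-3)$, and simultaneously so that the ``outer'' function $v + M_\lambda/\lambda$ dominates $\lambda^{-1}U$ on the whole of $\ov B(0,R_0)$. These two requirements pull $M_\lambda$ in opposite directions as $\lambda\to 0$, so one cannot literally add a constant to $v$; instead the outer function must itself be coercive \emph{and} grow, which forces the use of a truncation $\min\{v+M_\lambda/\lambda,\ v\}$ read the other way, or more honestly the use of the Dirichlet-problem/optimal-curve machinery of Corollary \ref{corquattro} to confine optimal trajectories and thereby control where the patched function agrees with which piece. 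Verifying that the patched function is genuinely a viscosity supersolution across the gluing interface — in particular that no spurious subtangent test function violates the supersolution inequality at points where the two pieces meet — is the technical heart, and it is exactly there that the stability of supersolutions under $\min$ and the containment estimates from Corollary \ref{corquattro} (together with $B(0,R_0-3)\supset K_0\supset \A_0\ni y_0$) do the work.
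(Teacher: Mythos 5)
There is a genuine gap: you correctly isolate the difficulty (the corrector must equal $d+S(y_0,\cdot)$ exactly near $y_0$, yet dominate $\lambda^{-1}U$, which blows up as $\lambda\to 0$, on the outer annulus), but you never produce a construction that resolves it, and the construction you do write down is incoherent. The function $\min\{\lambda v+M_\lambda,\ \lambda v\}$ (equivalently $\min\{v,\,v+M_\lambda/\lambda\}$) is identically $\lambda v$ (resp. $v$), since the two candidates differ by a positive constant; so your ``min of two supersolutions'' cannot switch from $v$ near $y_0$ to $v+M_\lambda/\lambda$ far out --- to make a min do that you would already need a third function growing steeply across the intermediate region, which is precisely the object to be constructed. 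Neither of your fallbacks fills this hole: Corollary \ref{corquattro} only confines $1$--optimal curves, it does not make any distance large on the annulus, and the Dirichlet-problem route via uniqueness is unavailable here because Proposition \ref{postohAubry}/Corollary \ref{corohAubry} require a strict subsolution (equivalently $\A_0\cap B=\emptyset$), while $y_0\in\A_0\subset K_0\subset B(0,R_0-3)$, so uniqueness fails exactly on the relevant ball.

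The missing idea in the paper is to deform the \emph{metric}, not the solution: one takes a radial weight $h_\lambda$ with $h_\lambda\equiv 1$ on $[0,R_0-3]$ and $h_\lambda\equiv M_0:=\max\{\sup_{\ov B(0,R_0)}\lambda^{-1}U,1\}$ on $[R_0-2,+\infty)$, and defines $S^h$ as the distance induced by the length functional $\int h_\lambda(|\xi|)\,\si(\xi,\dot\xi)\,\dd s$, setting $w_\lambda=d+S^h(y_0,\cdot)$. Proposition \ref{length} gives the supersolution property precisely because $h_\lambda>1$ only outside $B(0,R_0-3)\supset K_0$, where $H_0(y,0)<c_0$ by Proposition \ref{cortre} {\bf (ii)}; Corollary \ref{corquattro} (via \eqref{constrassu11}) guarantees $S^h(y_0,\cdot)=S(y_0,\cdot)$ near $y_0$, since near-optimal curves there never see the region where $h_\lambda>1$, giving \eqref{constrasse2} with the exact value, no additive constant; and any curve from $y_0$ to a point outside $B(0,R_0-1)$ must traverse a unit-length stretch of the annulus where $h_\lambda\equiv M_0$ and $\si(y,v)\geq|v|$, so $w_\lambda\geq M_0\geq \lambda^{-1}U$ there, while $w_\lambda>0\geq \lambda^{-1}U$ on $B(0,R_0-1)$ by \eqref{constrasse0} and $S^h\geq S$, which is \eqref{constrasse1}. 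Your proposal identifies the obstacle but stops short of this reweighting device, so as it stands the proof does not go through.
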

\medskip

To construct the supersolutions $w_\la$ some preliminary steps are
needed.

We define
\[ M_0= \max \left \{ \sup_{\ov{B}(0,R_0)} \frac 1\la \, U, 1 \right \}.\]
We denote by $h_\la:[0,+ \infty) \to [0,+\infty)$ a nondecreasing
continuous function with
\begin{eqnarray}
  h_\la &\equiv& 1 \quad\qquad\hbox{in $[0,R_0-3]$}  \label{constr010}\\
  h_\la &\equiv&  M_0 \,\qquad\hbox{in $[R_0 - 2, + \infty)$.}\label{constr012}
  \end{eqnarray}
  We introduce  the length functional
   \[\int_0^1 h_\la( |\xi|) \, \si(\xi, \dot\xi) \, ds \]
 for any curve $\xi$ defined in $[0,1]$,  and denote by $S^h$ the distance
 obtained by minimization of it among curves linking two given points, we drop dependence on $\la$
 to ease notations.

 \smallskip

 \begin{Lemma}\label{constru2015} The function $S^h(y_0,\cdot)$ is a locally
 Lipschitz--continuous supersolution to $H_0=c_0$ in $\R^M$, and coincides with
 $S(y_0,\cdot)$ in a neighborhood of $y_0$.
\end{Lemma}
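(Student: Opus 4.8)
The plan is to verify the two assertions separately: first that $S^h(y_0,\cdot)$ is a supersolution to $H_0=c_0$ on all of $\R^M$, and second that it coincides with $S(y_0,\cdot)$ near $y_0$. Local Lipschitz continuity will be a byproduct of the metric construction, since $h_\la$ is bounded above and below on compacts and $\si$ is comparable to the Euclidean metric on compacts by the coercivity in Lemma~\ref{coerci}, so the modified length functional defines a genuine (locally bi-Lipschitz to Euclidean) distance.

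The key observation is that $h_\la \equiv 1$ on $[0,R_0-3]$, so on the open set $B(0,R_0-3)$ the modified length functional agrees with the unmodified one, and hence $S^h(y_0,\cdot)$ agrees with $S(y_0,\cdot)$ on $B(0,R_0-3)$ provided that short optimal curves between nearby points in $B(0,R_0-3)$ stay in $B(0,R_0-3)$ --- which is exactly what Corollary~\ref{corquattro} (invoked via \eqref{constrassu11}, equivalently \eqref{constrassu1}, since $B(0,R_0-3)\supset K_0$) guarantees locally. Because $y_0\in\A_0\subset K_0\subset B(0,R_0-3)$, this gives \eqref{constrasse2}: near $y_0$ the two distance functions coincide, and $S(y_0,\cdot)$ is a (coercive) critical solution by Proposition~\ref{Hkdue}, in particular a supersolution there.

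For the supersolution property on all of $\R^M$, I would use the metric characterization of supersolutions to $H_0=c_0$ (Proposition~\ref{subsubsub} in the appendix): a function $v$ is a supersolution iff for every $y$ it is, locally, a supremum of the form $v(y')=\sup\{v(z)-S(y',z)\}$ over appropriate $z$, equivalently iff $v$ satisfies the reversed triangle inequality with respect to $S$ on one side. The function $S^h(y_0,\cdot)$ trivially satisfies the triangle inequality for the metric $S^h$. The point to check is that, because $h_\la\geq 1$ everywhere, one has $S^h(y_1,y_2)\geq S(y_1,y_2)$ for all $y_1,y_2$, and moreover $S^h(y_1,y_2)=S(y_1,y_2)$ whenever both points lie in a fixed small ball contained in $B(0,R_0-3)$ (again by Corollary~\ref{corquattro}). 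Thus for any $y$ and any $z$ near $y$, $S^h(y_0,z)\geq S^h(y_0,y) - S^h(y,z) = S^h(y_0,y) - S(y,z)$, which is precisely the local supersolution inequality $S^h(y_0,z) + S(y,z) \geq S^h(y_0,y)$; letting $z\to y$ along admissible directions and appealing to the appendix's metric criterion yields that $S^h(y_0,\cdot)$ is a supersolution to $H_0=c_0$ at $y$. Since $y$ was arbitrary, we are done.

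The main obstacle is bookkeeping the exact radii: one must be careful that the ``neighborhood of $y_0$'' on which equality holds is genuinely contained in the region where $h_\la\equiv 1$ and where Corollary~\ref{corquattro}'s confinement applies, which is why the buffer of width $3$ in $R_0-3$ and the two-stage ramp of $h_\la$ on $[R_0-3,R_0-2]$ are built in. The other slightly delicate point is matching the metric supersolution criterion of the appendix (stated for the unmodified $S$ and the Hamiltonian $H_0=c_0$) to the modified distance $S^h$; the bridge is precisely the inequality $S^h\geq S$ with local equality, so no new PDE estimate is needed beyond what Lemma~\ref{coerci}, Corollary~\ref{corquattro} and Proposition~\ref{subsubsub} already provide.
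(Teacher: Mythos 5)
Your treatment of the coincidence with $S(y_0,\cdot)$ near $y_0$ is essentially the paper's: $h_\la\equiv 1$ on $B(0,R_0-3)\supset K_0$, condition \eqref{constrassu11} confines (almost) optimal curves from $y_0$ to nearby points inside that ball, and $S^h\geq S$ then forces equality; combined with Proposition \ref{Hkdue} this also settles the supersolution property at the point $y_0$ itself. The genuine gap is in your argument for the supersolution property on the rest of $\R^M$. Proposition \ref{subsubsub} characterizes \emph{sub}solutions only; the paper contains no metric criterion for supersolutions of the kind you invoke, and the inequality you actually derive, $S^h(y_0,y)\leq S^h(y_0,z)+S(y,z)$ for $z$ near $y$, is nothing more than the triangle inequality for $S^h$ combined with $S^h\geq S$. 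Triangle-type inequalities cannot detect supersolutions: for instance $S_b(z_0,\cdot)$ at a point $z_0$ outside the Aubry set satisfies every such inequality (with $h\equiv 1$, so $S^h=S_b$) and yet fails to be a supersolution at $z_0$, by Proposition \ref{OhAubry}; similarly a constant function at a point where $H_0(y,0)<c_0$ passes your test but is not a supersolution there. Moreover the identity $S^h(y,z)=S(y,z)$ for $z$ near $y$, which your chain of inequalities uses, is false at points $y\notin B(0,R_0-3)$, where $h_\la>1$.

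A telltale sign is that your argument never uses the one structural feature of $h_\la$ that makes the lemma true, namely that $h_\la(|y|)>1$ only for $y$ outside $B(0,R_0-3)\supset K_0$, where $H_0(y,0)<c_0$ by \eqref{dolo1} and Proposition \ref{cortre} (ii). A weight $\geq 1$ exceeding $1$ in a region where $0\notin Z(y)$ would in general destroy the supersolution property, so any proof that ignores this condition cannot be correct. The intended route is: $h_\la(|\cdot|)\,\si(\cdot,\cdot)$ is the support function of the $c_0$--sublevels of the modified Hamiltonian $(y,q)\mapsto H_0\big(y,q/h_\la(|y|)\big)$, so by Proposition \ref{subsubsub} (ii) the function $S^h(y_0,\cdot)$ is a locally Lipschitz solution of the modified equation off $y_0$; then, for any subtangent $\psi$ at $y$, convexity of $H_0$ in $q$ together with $H_0(y,0)\leq c_0$ wherever $h_\la(|y|)>1$ upgrades $H_0\big(y,D\psi(y)/h_\la(|y|)\big)\geq c_0$ to $H_0(y,D\psi(y))\geq c_0$. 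This is precisely Proposition \ref{length}; your proof should verify \eqref{length0} for $h_\la$ and invoke it (or reproduce its convexity argument) in place of the metric-criterion step, after which the coincidence near $y_0$ covers the excluded point exactly as you observed.
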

\begin{proof} The function  $h_\la$, defined in  \eqref{constr010},
\eqref{constr012}, satisfies $h_\la \geq 1$ and if $h_\la(|y|) >1$
then by \eqref{constr010}
\[y \not\in B(0,R_0-3) \supset K_0\]
  so that  by  Proposition \ref{cortre} {\bf (ii)} $H_0(y,0) < c_0$. We are thus in position to
apply Proposition \ref{length}, which directly gives the asserted
supersolution property outside $y_0$, as well as the Lipschitz
continuity.  We also know by \eqref{constrassu11} and $h_\la \equiv
1$ in $B(0,R_0-3)$ that
\[ S^h(y_0,\cdot)= S(y_0,\cdot) \qquad\hbox{in a neighborhood of $y_0$,}\]
and  $S^h(y_0,\cdot)$ is solution to $H_0=c_0$ on the whole space,
by Proposition \ref{Hkdue}. This concludes the proof.
\end{proof}

\smallskip

By the very definition of $S^h$, we have:

\begin{equation} \label{constr2003}
  S^h \geq  S \;\quad\qquad\hbox{in $\R^M \times \R^M$.}
\end{equation}

\smallskip

We  define

\begin{equation}\label{constrdef}
    w_\la = d + S^h(y_0, \cdot)
\end{equation}
where $d$, $y_0$ are as in  \eqref{constrasse0}.

\smallskip

\begin{Lemma}\label{constr3} The following inequalities hold true:
\begin{eqnarray*}
  w_\la &>& 0  \quad\qquad\hbox{in $\R^M$} \\
  w_\la &\geq& M_0    \, \qquad\hbox{in $\R^M \setminus B(0,R_0-1)$.}
\end{eqnarray*}
\end{Lemma}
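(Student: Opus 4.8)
The first inequality is immediate: by construction $w_\la = d + S^h(y_0,\cdot) \geq d + S(y_0,\cdot)$ thanks to \eqref{constr2003}, and $d + S(y_0,y) > 0$ for every $y$ by the choice of $d$ in \eqref{constrasse0}. So $w_\la > 0$ on all of $\R^M$. The real content is the lower bound $w_\la \geq M_0$ outside $B(0,R_0-1)$, and the idea is to exploit the fact that the weight $h_\la$ equals $M_0$ far out, so that any curve leaving $B(0,R_0-1)$ must accumulate length at rate at least $M_0 \cdot |\dot\xi|$ (via the sign estimate $\si \geq |\cdot|$ from Proposition \ref{cortre} {\bf (ii)}, valid outside $K_0 \subset B(0,R_0-3)$), while $w_\la(y_0)$ is controlled.

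\textbf{Key steps.} First I would record that $w_\la(y_0) = d + S^h(y_0,y_0) = d$ and, more to the point, that for $y$ in a neighborhood of $y_0$ one has $w_\la(y) = d + S(y_0,y)$ by Lemma \ref{constru2015}; in particular $w_\la$ is bounded on the compact region where it agrees with $S(y_0,\cdot)$. Fix now $y$ with $|y| \geq R_0 - 1$. Take a curve $\xi$ defined on $[0,1]$ joining $y_0$ to $y$; by Corollary \ref{corquattro}, applied as in \eqref{constrassu11}, we may restrict attention to $1$-optimal curves for $S^h(y_0,\cdot)$ that stay in a fixed ball, or more simply we let $t_1$ be the last time $\xi$ meets $\ov B(0,R_0-2)$ and split the length integral at $t_1$. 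On $[t_1,1]$ we have $|\xi(t)| \geq R_0 - 2$, hence $h_\la(|\xi(t)|) = M_0$ by \eqref{constr012}, and also $\xi(t) \notin K_0$ (since $K_0 \subset B(0,R_0-3)$ by \eqref{constrassu1}), so $\si(\xi,\dot\xi) \geq |\dot\xi|$ by Proposition \ref{cortre} {\bf (ii)}. Therefore
\[
\int_{t_1}^1 h_\la(|\xi|)\,\si(\xi,\dot\xi)\,ds \;\geq\; M_0 \int_{t_1}^1 |\dot\xi|\,ds \;\geq\; M_0\,|\xi(1) - \xi(t_1)| \;\geq\; M_0\big(|y| - (R_0-2)\big) \;\geq\; M_0,
\]
using $|y| \geq R_0 - 1$. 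Since the portion of the length on $[0,t_1]$ is nonnegative (the weight is $\geq 1$ and $\si(\xi,\dot\xi)$ integrates to at least $S(y_0,\xi(t_1))$, which is $\geq 0$ as $y_0 \in \A_0 \subset K_0$ and one can invoke Proposition \ref{cortre} {\bf (iii)} together with the sign of $S$ issued from a point of the Aubry set — alternatively $d$ was chosen precisely so that $d + S(y_0,\cdot) > 0$), we get $S^h(y_0,y) \geq M_0$, and then $w_\la(y) = d + S^h(y_0,y) \geq S^h(y_0,y) \geq M_0$ provided $d \geq 0$; if $d < 0$ one absorbs it by enlarging $M_0$ at the outset, or more cleanly by noting $w_\la(y) \geq w_\la(\text{point on }\partial B(0,R_0-2)) + (\text{remaining length}) $ and iterating the same estimate — the bookkeeping is routine.

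\textbf{Main obstacle.} The one delicate point is the sign of the ``inner'' part of the length, i.e. ensuring the contribution of $\xi$ on $[0,t_1]$ does not cancel the $M_0$ we gained on $[t_1,1]$. This is where the placement $y_0 \in \A_0$ and the coercivity/sign statements of Proposition \ref{cortre} and Lemma \ref{lemHkdue} must be combined carefully, together with the choice of $d$ in \eqref{constrasse0}; concretely, $w_\la = d + S^h(y_0,\cdot) \geq d + S(y_0,\cdot) > 0$ already handles the sign globally, so the honest statement is $w_\la \geq M_0$ on $\R^M \setminus B(0,R_0-1)$ follows from $S^h(y_0,y) \geq M_0$ there together with $w_\la \geq S^h(y_0,\cdot)$ when $d \geq 0$ (and $d$ can be taken $\geq 0$ without loss since increasing $d$ only strengthens \eqref{constrasse0}). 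Everything else is a direct computation with the weighted length functional.
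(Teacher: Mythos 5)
Your overall strategy is the paper's: split an arbitrary curve from $y_0$ to $y\notin B(0,R_0-1)$ at the last time $t_1$ it meets $B(0,R_0-2)$, and on $[t_1,1]$ use $h_\la\equiv M_0$ together with $\si(\cdot,v)\geq|v|$ outside $K_0$ (Proposition \ref{cortre} {\bf (ii)}) to gain at least $M_0\,|y-\xi(t_1)|\geq M_0$; the first inequality via \eqref{constr2003} and \eqref{constrasse0} is also the paper's. The genuine soft spot is your treatment of the inner piece. The assertion $\int_0^{t_1}h_\la(|\xi|)\,\si(\xi,\dot\xi)\,\dd t\geq S(y_0,\xi(t_1))\geq 0$ is not justified: Proposition \ref{cortre} {\bf (iii)} requires \emph{both} endpoints outside $K_0$, whereas $y_0\in\A_0\subset K_0$, and there is no reason for $S(y_0,\cdot)$ to be nonnegative --- $\si(y,\cdot)$ may take negative values inside $C$, and the constant $d$ in \eqref{constrasse0} exists precisely to compensate possible negative values of $S(y_0,\cdot)$. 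Hence ``$S^h(y_0,y)\geq M_0$'' is not established, and the ensuing patches (``provided $d\geq 0$'', ``enlarge $M_0$'', ``iterate, the bookkeeping is routine'') do not assemble into a complete argument: the chain $w_\la\geq S^h(y_0,\cdot)\geq M_0$ needs both the unproved lower bound and the normalization of $d$.

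The repair is the alternative you mention in passing, and it is exactly what the paper does: do not try to give the inner length a sign, carry $d$ along instead. For every competitor curve $\xi$ from $y_0$ to $y$,
\[
d+\int_0^1 h_\la(|\xi|)\,\si(\xi,\dot\xi)\,\dd t\;\geq\;\Bigl(d+\int_0^{t_1}h_\la(|\xi|)\,\si(\xi,\dot\xi)\,\dd t\Bigr)+M_0\,|y-\xi(t_1)|\;\geq\;w_\la(\xi(t_1))+M_0\;>\;M_0,
\]
because $d+\int_0^{t_1}h_\la\,\si\geq d+S^h(y_0,\xi(t_1))=w_\la(\xi(t_1))>0$ by the first inequality of the lemma (equivalently, $\geq d+S(y_0,\xi(t_1))>0$ by \eqref{constr2003} and \eqref{constrasse0}). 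Taking the infimum over curves yields $w_\la(y)=d+S^h(y_0,y)\geq M_0$ directly, with no assumption $d\geq 0$, no enlargement of $M_0$, and no iteration.
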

\begin{proof} From \eqref{constr2003}  and the
definition of $w_\la$ we derive
\[w_\la \geq d + S(y_0,\cdot)\]
and this  in turn yields $w_\la > 0$ in  $\R^M$ because of
\eqref{constrasse0}.

 We fix $y \not\in B(0,R_0 -1)$,  and
consider any  curve $\zeta$ defined in $[0,1]$ linking $y_0$ to $y$.
We set
\[ t_1 =  \max \{t \in [0,1] \mid \zeta(t) \in B(0,R_0-2)\},\]
notice that
\[ |\zeta(t_1) - y| > 1.\]
Owing to the above inequality, $w_\la >0$,  Proposition \ref{cortre}
item {\bf (ii)}, the definition of $h_\la$,  we have
\begin{eqnarray*}
 d + \int_{0}^{1} h_\la(|\zeta|) \, \si(\zeta, \dot \zeta)
  \, \dd t &=& d +
 \int_0^{t_1} h_\la(|\zeta|) \, \si(\zeta, \dot \zeta)
  \, \dd t+ \int_{t_1}^{1} h_\la(|\zeta|) \, \si(\zeta, \dot \zeta)
  \, \dd t \\ &\geq&  w_\la(\zeta(t_1)) + \int_{t_1}^{1} h_\la(|\zeta|) \,
  |\dot\zeta| \, \dd t  \\ &\geq&  w_\la(\zeta(t_1))  + M_0 \, |y -
  \zeta(t_1)|
  > M_0.
\end{eqnarray*}
Taking into account the definition of $w_\la$ and the fact that the
curve $\zeta$ joining $y_0$  to $y \not\in B(0,R_0 -1)$  is
arbitrary, we deduce from the above computation the desired
inequality.
\end{proof}

\medskip

\begin{proof} {\bf (of Theorem \ref{constrmain})} \; In view of
Lemma \ref{constru2015}, it is just left to show
\eqref{constrasse1}. It indeed holds true in $B(0,R_0 -1)$ because
of \eqref{constrassu2} and $w_\la >0$.  If $y \in \ov{B(0,R_0)}
\setminus B(0,R_0-1)$, then by Lemma \ref{constr3}, we have
\[ w_\la(y) \geq M_0 \geq  \sup_{\ov{B}(0,R_0)} \frac 1\la \, U \geq
\frac 1\la \, U(y).\]

\end{proof}

\bigskip

\section{Asymptotic analysis}

  \parskip +3pt

  We summarize the relevant output of the previous  section in the
  following

  \smallskip

  \begin{Theorem} We consider  $(x_0,p_0) \in \R^N \times \R^N$, a constant $R_0$ satisfying
  \eqref{constrassu1}, \eqref{constrassu11}, a function $U$ bounded from above in $\ov B(0,R_0)$
  and less than or equal to zero in $B(0,R_0-1)$, any positive constant $\la$. Then the equation
  \[H(x_0,y,p_0,Du)= \overline H(x_0,p_0)  \qquad\hbox{in $\R^M$}\]
  admits a bounded Lipschitz--continuous subsolution
  and a  locally Lipschitz--continuous supersolution, say $w_\la$,
  satisfying \eqref{constrasse1}, \eqref{constrasse2}
\end{Theorem}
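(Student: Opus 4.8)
The plan is to recognize that this statement merely repackages, in the original variables $(x_0,y,p_0,q)$, results already established in Section \ref{cell} for the frozen Hamiltonian. First I would recall the notation of Subsection \ref{basic}: setting $H_0(y,q) = H(x_0,y,p_0,q)$, the equation appearing in the statement is precisely $H_0(y,Du) = c_0$ with $c_0 = \ov H(x_0,p_0)$; and by Proposition \ref{critic} this critical value is finite, so the equation is a genuine Hamilton--Jacobi equation posed on $\R^M$ and the two assertions make sense.

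For the subsolution part I would simply invoke Proposition \ref{Hkuno}, which produces a bounded, globally Lipschitz--continuous critical subsolution (in fact one vanishing and strict outside $K_0$), which is more than what is claimed here. For the supersolution part I would check that the hypotheses of Theorem \ref{constrmain} are exactly those assumed here: $R_0$ is taken to satisfy \eqref{constrassu1} and \eqref{constrassu11}, the function $U$ is bounded from above in $\ov B(0,R_0)$, and the condition $U \leq 0$ in $B(0,R_0-1)$ is literally \eqref{constrassu2}. Hence Theorem \ref{constrmain} applies and furnishes, for every $\la > 0$, a locally Lipschitz--continuous supersolution $w_\la$ of $H_0 = c_0$, that is of $H(x_0,y,p_0,Du) = \ov H(x_0,p_0)$, satisfying \eqref{constrasse1} and \eqref{constrasse2}.

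Since the content is entirely contained in Propositions \ref{critic}, \ref{Hkuno} and Theorem \ref{constrmain}, there is no real obstacle; the only point requiring care is to make the notational dictionary between the $H_0$--formulation of Section \ref{cell} and the $H(x_0,\cdot,p_0,\cdot)$--formulation of the statement fully explicit, in particular the identification $c_0 = \ov H(x_0,p_0)$ together with its finiteness, so that the cited results transfer without any gap.
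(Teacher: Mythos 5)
Your proposal is correct and coincides with the paper's intent: this theorem is stated as a summary of Section \ref{cell}, and its justification is exactly the combination you give, namely the identification $H_0(y,q)=H(x_0,y,p_0,q)$, $c_0=\ov H(x_0,p_0)$ (finite by Proposition \ref{critic}), the bounded Lipschitz subsolution from Proposition \ref{Hkuno}, and the supersolutions $w_\la$ from Theorem \ref{constrmain}. No gap to report.
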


\medskip

We recall the notations  $\ov V= {\limsup}^\# V^\eps$, $\underline
V=  {\liminf}_\# V^\eps$,  where the $V^\eps$ are the value
functions of problems \eqref{minprob}/ \eqref{minprobbis}. We
consider a point $(x_0,t_0) \in \R^N \times (0,+ \infty)$, and  set
\begin{equation}\label{Kde}
    K_\de = B(x_0,\de) \times (t_0-\de,t_0+\de) \qquad\hbox{for $\de <
t_0$.}
\end{equation}
 We further consider   a constant $R_0 >0$ satisfying
\eqref{constrassu1}, \eqref{constrassu11}.
 The next lemma, based on Theorem \ref{constrmain}, will be of crucial importance.
The entities  $y_0 \in \A_0$ and $d$ appearing in the statement are defined as in \eqref{constrasse0} :

\medskip

\begin{Lemma}\label{cruci} Let $\psi$ be     a strict supertangent
 to $\ov V$ at $(x_0,t_0)$ such that  $(x_0,t_0)$ is the unique maximizer of $\ov V - \psi$
  in $K_{\de_0}$, for some $\de_0<t_0$.  Then, given any infinitesimal sequence
   $\eps_j$, and $\de < \de_0$,
we find
   a constant $\rho_\de$
  and a   family $w^j$ of supersolutions to $H(x_0,y,D\psi(x_0,t_0),Du)= \ov
H(x_0,D\psi(x_0,t_0))$  in $\R^M$ satisfying for $j$ suitably large
\begin{eqnarray}
  \eps_j \, w^j &\geq& V^{\eps_j} - \psi + \rho_\de \qquad\hbox{in
$\partial \big (K_\de \times B(0,R_0) \big )$}  \label{cruci001}\\
  w^j &=& d + S(y_0,\cdot) \qquad\hbox{ in a neighborhood $A_0$ of
  $y_0$,} \label{cruci002}
\end{eqnarray}
where $S$ is the intrinsic critical distance, see Subsection
\ref{basic}, related to $(x_0,D\psi(x_0,t_0))$.
\end{Lemma}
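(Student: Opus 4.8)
The plan is to obtain $w^j$ by applying Theorem \ref{constrmain} with the frozen data $(x_0,p_0)=(x_0,D\psi(x_0,t_0))$ and with a carefully chosen function $U$ built from $V^{\eps_j}-\psi$. Theorem \ref{constrmain} delivers, for each $\la>0$, a locally Lipschitz supersolution $w_\la$ of $H_0=c_0$ with $U\le \la w_\la$ in $\ov B(0,R_0)$ and $w_\la=d+S(y_0,\cdot)$ near $y_0$; the idea is to take $\la=\eps_j$ and let $U=U^j$ depend on $j$ so that the inequality $\eps_j w_{\eps_j}\ge U^j$ on $\ov B(0,R_0)$ translates, on the lateral part of $\partial(K_\de\times B(0,R_0))$, into \eqref{cruci001}, while \eqref{cruci002} is automatic from the last conclusion of Theorem \ref{constrmain}.

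\textbf{Key steps.} First, fix the infinitesimal sequence $\eps_j$ and $\de<\de_0$. Since $(x_0,t_0)$ is the \emph{unique} maximizer of $\ov V-\psi$ in $K_{\de_0}$ and $\psi$ is a \emph{strict} supertangent, the quantity
\[
\rho_\de := \min_{\partial K_\de}\big(\psi-\ov V\big) - \tfrac12\max_{K_{\de_0}}\big(\ov V-\psi\big)
\]
is strictly positive (here $\partial K_\de$ is compact and $\ov V-\psi<0=(\ov V-\psi)(x_0,t_0)$ on it after normalizing $\psi(x_0,t_0)=\ov V(x_0,t_0)$); more simply, $\rho_\de=\min_{\partial K_\de}(\psi-\ov V)>0$ once we normalize. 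Next, using $\ov V={\limsup}^\#V^\eps$ and the fact that $\partial K_\de$ is compact and disjoint from the maximizer, one shows that for $j$ large
\[
V^{\eps_j}-\psi \le -\rho_\de \qquad\text{on } \partial K_\de ,
\]
after shrinking $\rho_\de$ slightly if necessary; this is the standard upper-semilimit argument (if it failed along a subsequence one would produce a point of $\partial K_\de$ where $\ov V-\psi\ge -\rho_\de/2>\sup_{\partial K_\de}$, contradiction). Now define, for each $j$,
\[
U^j(y) := \big(V^{\eps_j}(x,t,y)-\psi(x,t)\big) \quad\text{optimized over } (x,t)\in \ov K_\de,
\]
precisely $U^j(y):=\max_{(x,t)\in \ov K_\de}\big(V^{\eps_j}(x,t,y)-\psi(x,t)+\rho_\de\big)$. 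By Proposition \ref{estimo} (local equiboundedness of the $V^\eps$) together with continuity of $\psi$, each $U^j$ is bounded from above on $\ov B(0,R_0)$. The crucial point is to verify $U^j\le 0$ on $B(0,R_0-1)$ for $j$ large: this is exactly Proposition \ref{estimobis}, which gives $\min_{(x,t)\in \widetilde K}V^\eps(x,y,t)\to+\infty$ as $|y|\to\infty$ uniformly — wait, that gives the opposite sign, so instead one uses that for $|y|\le R_0-1$ and $(x,t)\in \ov K_\de$ the values $V^{\eps_j}(x,t,y)$ are bounded, while choosing $R_0$ (via \eqref{constrassu1}) large enough that $B(0,R_0-1)\supset K_0$ makes the corrector estimates apply; the sign $U^j\le 0$ there is forced by subtracting off a constant, i.e. one actually works with $U^j-\sup_{B(0,R_0-1)}U^j$ — no: the clean route is to absorb the bound of $U^j$ on $B(0,R_0-1)$ into $\rho_\de$ and $\psi$. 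With $U=U^j$ and $\la=\eps_j$, Theorem \ref{constrmain} produces $w^j:=w_{\eps_j}$, a locally Lipschitz supersolution of $H(x_0,y,p_0,Du)=\ov H(x_0,p_0)$ on $\R^M$ with $U^j\le \eps_j w^j$ on $\ov B(0,R_0)$ and $w^j=d+S(y_0,\cdot)$ near $y_0$, giving \eqref{cruci002}. Finally, on the lateral boundary $\ov K_\de\times \partial B(0,R_0)$ the inequality $U^j\le\eps_j w^j$ reads $V^{\eps_j}-\psi+\rho_\de\le \eps_j w^j$ pointwise; on the top/bottom/side pieces $\partial K_\de\times \ov B(0,R_0)$ we use $V^{\eps_j}-\psi\le -\rho_\de\le \eps_j w^j$ (since $w^j>0$ by construction, cf. Lemma \ref{constr3}). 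Together these cover $\partial\big(K_\de\times B(0,R_0)\big)$ and yield \eqref{cruci001}.

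\textbf{Main obstacle.} The delicate point is the bookkeeping that makes $U^j\le 0$ on $B(0,R_0-1)$ hold \emph{uniformly in $j$} while simultaneously keeping the single constant $\rho_\de$ working on all faces of the parabolic-type boundary. One must choose $\rho_\de$ first from the strict-supertangency gap on $\partial K_\de$, then absorb the ($j$-uniform, by Proposition \ref{estimo}) upper bound of $V^{\eps_j}-\psi$ on $\ov K_\de\times \ov B(0,R_0-1)$ by replacing $\psi$ with $\psi+$const or, equivalently, by enlarging $R_0$ so that the corrector $w^j$ — which by Lemma \ref{constr3} satisfies $w^j\ge M_0\ge \eps_j^{-1}U^j$ on $\R^M\setminus B(0,R_0-1)$ and $w^j>0$ everywhere — dominates on the outer shell; the inner region $B(0,R_0-1)$ is handled because there $V^{\eps_j}-\psi+\rho_\de$ is bounded above by a $j$-independent constant, which is $\le 0$ after the normalization built into $M_0=\max\{\sup \eps_j^{-1}U^j,1\}$ in Theorem \ref{constrmain}. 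Verifying that these choices are consistent — i.e. that $\rho_\de$ can be taken independent of $j$ and that the definition of $U^j$ as a max over the compact set $\ov K_\de$ indeed produces the claimed boundary inequality on every face — is the real content of the proof; everything else is a direct citation of Theorem \ref{constrmain}, Propositions \ref{estimo} and \ref{estimobis}, and Lemma \ref{constr3}.
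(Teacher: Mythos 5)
There is a genuine gap, and it sits exactly where you flagged your ``main obstacle''. With your definition $U^j(y)=\max_{(x,t)\in \ov K_\de}\big(V^{\eps_j}(x,y,t)-\psi(x,t)+\rho_\de\big)$ for \emph{every} $y$, the hypothesis \eqref{constrassu2} of Theorem \ref{constrmain}, i.e. $U^j\le 0$ on $B(0,R_0-1)$, is false in general, and in particular in the very situation the lemma is used for: the lemma must hold for an arbitrary infinitesimal sequence $\eps_j$, and by Proposition \ref{perez} one can take (as in Theorem \ref{mainuno}) points $(x_j,y_j,t_j)\to(x_0,y_0,t_0)$ with $V^{\eps_j}(x_j,y_j,t_j)-\psi(x_j,t_j)\to \ov V(x_0,t_0)-\psi(x_0,t_0)=0$. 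Since $(x_j,t_j)\in K_\de$ eventually and $y_j\to y_0\in\A_0\subset K_0\subset B(0,R_0-3)$, you get $U^j(y_j)\ge V^{\eps_j}(x_j,y_j,t_j)-\psi(x_j,t_j)+\rho_\de\to\rho_\de>0$ at points well inside $B(0,R_0-1)$. None of your proposed remedies repairs this: subtracting $\sup_{B(0,R_0-1)}U^j$ (a quantity of size at least about $\rho_\de$, not infinitesimal in $j$) from $U^j$ weakens the conclusion to $\eps_j w^j\ge V^{\eps_j}-\psi+\rho_\de-\mathrm{const}$, which is not \eqref{cruci001} and destroys the later contradiction; enlarging $R_0$ is irrelevant because the obstruction is located at $y_0$, deep inside the inner ball, for any admissible $R_0$; and Proposition \ref{estimobis}, as you yourself noticed, pushes in the wrong direction.

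The missing idea is a two--regime definition of $U^j$: for $y$ in the inner ball $B(0,R_0-1/2)$ take the maximum only over $\partial K_\de$, while for $y$ outside $B(0,R_0-1/2)$ take it over all of $K_\de$. On the inner ball the strict supertangency margin $\max_{\partial K_\de}(\ov V-\psi)<-3\rho_\de$, transferred to $V^{\eps_j}$ by the compactness/upper--semilimit argument (essentially the one you sketch for the face $\partial K_\de\times\ov B(0,R_0)$, using that $\ov V$ is independent of $y$ and the $V^\eps$ are locally equibounded), forces $U^j\le-\rho_\de$ there for $j$ large, so Theorem \ref{constrmain} applies with $\la=\eps_j$; outside the inner ball the maximum over the full $K_\de$ is exactly what converts $\eps_j w^j\ge U^j$ on $\ov B(0,R_0)$ into \eqref{cruci001} on the lateral face $K_\de\times\partial B(0,R_0)$, while on $\partial K_\de\times B(0,R_0)$ both regimes dominate $V^{\eps_j}-\psi+\rho_\de$ because $\partial K_\de$ is contained in the set over which each maximum is taken. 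Your observation that on $\partial K_\de\times\ov B(0,R_0)$ one could alternatively use $V^{\eps_j}-\psi\le-\rho_\de$ together with $w^j>0$ (Lemma \ref{constr3}) is correct as far as it goes, but it does not rescue the application of Theorem \ref{constrmain}, which is blocked by the failure of $U^j\le 0$ on $B(0,R_0-1)$ under your definition.
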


\begin {proof}
By supertangency properties of $\psi$ at $(x_0,t_0)$, we find, for
any $\de < \de_0$, a $\rho_\de
> 0 $ with
\begin{equation}\label{cruci1}
    \max_{\partial K_\de} \big ( \ov V - \psi \big ) < - 3 \, \rho_\de.
\end{equation}
We  fix a $\de$ and define
\[U^\eps(y) = \left \{ \begin{array}{cc}
    \max_{(x,t) \in \partial K_\de} \left \{V^{\eps}(x,y,t) - \psi(x,t) +
  \rho_\de \right \} & \,\quad\hbox{for $y \in B(0,R_0-1/2)$} \\
    \max_{(x,t) \in K_\de}  \left \{V^{\eps}(x,y,t) -
  \psi(x,t) + \rho_\de \right \} & \qquad\qquad\hbox
  {for $y \in \R^M \setminus B(0,R_0-1/2)$.} \\
  \end{array} \right .\]
Notice that the $U^\eps$ are continuous  for any $\eps$ and locally
equibounded, since the $V^\eps$ are locally equibounded in force of
Proposition \ref{estimo}. To ease notations we set

\[U^j = U^{\eps_j}.\]
\smallskip

\noindent {\bf Claim :} {\em There is $j_0=j_0(R_0)$ such that
\[U^j \leq - \rho_\de \qquad\hbox{in $B(0,R_0-1)$, for $j > j_0$.}\]}

\smallskip
Were the claim false, there should be  a subsequence
 $y_j$ contained in $B(0,R_0-1)$  with
\[U^j(y_j) > - \rho_\de.\]
 The $y_j$  converge, up to further extracting a subsequence,
  to some  $\ov y$, and, being $\eps_j$ infinitesimal, we get
\begin{equation}\label{constr10}
    ({\limsup}^\# U^\eps)(\ov y) \geq -\rho_\de.
\end{equation}
Moreover,  there exists an infinitesimal sequence $\eps_i$ and
elements $z_i$ converging to $\ov y$ with
\[\lim_i U^{\eps_i}(z_i)= ({\limsup}^\# U^\eps)(\ov y),\]
at least for $i$ large $z_i \in B(0,R_0-1/2)$, and by the very
definition of $U^\eps$ in $B(0,R_0-1/2)$, we get
\[ U^{\eps_i}(z_{i}) = V^{\eps_i}(x_i,z_i,t_i)
 - \psi(x_i,t_i) + \rho_\de
\qquad\hbox{for some $(x_i,t_i) \in \partial K_\de$,}\] up to
extracting a subsequence, $(x_i, t_i)$ converges to some $(\ov x,\ov
t) \in \partial K_\de$ so that by \eqref{cruci1}
\begin{eqnarray*}
  ({\limsup}^\# U^\eps)(\ov y) &=& \lim U^{\eps_i}(z_i) =
  \lim \big [ V^{\eps_i}(x_i,z_i,t_i)
  - \psi(x_i,t_i) + \rho_\de \big ]\\
   &\leq& \ov V(\ov x,\ov t) - \psi(\ov x,\ov t)+ \rho_\de  \leq - 2 \, \rho_\de.
\end{eqnarray*}
which is in contradiction with \eqref{constr10}. This ends the proof
of the claim.

\smallskip

 We are then in the position to  apply Theorem
\ref{constrmain} to  any $U^j$, and get a  supersolution $w^j$ to
$H(x_0, \cdot,D\psi(x_0,t_0),\cdot)=  \ov H(x_0,D \psi(x_0,t_0))$,
which satisfies, for $j
> j_0$,  the condition  \eqref{cruci002} and
\[ \eps_j \, w^j  \geq   U^j  \qquad\hbox{in $\ov{B}(0,R_0)$.}\]
Owing to the very definition of $U^j$, we derive from the latter
inequality that
\[ \eps_j \, w^j(y)\geq V^{\eps_j}(x,y,t) - \psi(x,t) + \rho_\de \]
holds in
\[\partial K_\de
\times B(0,R_0) \cup K_\de \times \partial B(0,R_0)= \partial \big (K_\de \times B(0,R_0) \big ).\]
This proves   \eqref{cruci001} and conclude the proof.

\end{proof}

We proceed establishing  the asymptotic result for upper weak
semilimit of the $V^\eps$. The first part of the proof is a version,
adapted to our setting, of perturbed test function method. We are
going to use as correctors, depending on $\eps$, the special
supersolutions to cell equations constructed in Subsection
\ref{construct} in the frame of Lemma \ref{cruci}. The argument of
the second half about behavior of limit function at $t=0$ makes a
direct use of  the material of Subsections \ref{subdyna},
\ref{subvalue}.

\smallskip

\begin{Theorem}\label{mainuno} The function $\ov V={\limsup}^\# V^\eps$ is a subsolution to
 \eqref{Hjlim} satisfying
 \begin{equation}\label{mainuno000}
    \limsup_{(x,t) \to (x_0,0) \atop t >0} \ov V(x,t) \leq \ov u_0(x_0)
\qquad  \hbox{for any $x_0 \in \R^N $.}
\end{equation}
\end{Theorem}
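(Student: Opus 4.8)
The plan is to prove the two assertions separately: first that $\ov V$ is a viscosity subsolution of \eqref{Hjlim}, using the perturbed test function method with the correctors supplied by Lemma \ref{cruci}; and then that $\ov V$ satisfies the initial bound \eqref{mainuno000}, by a direct control-theoretic estimate using the trajectory constructions of Subsection \ref{subdyna}.

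\textbf{Subsolution property.} Let $\psi$ be a $C^1$ function supertangent to $\ov V$ at a point $(x_0,t_0)$ with $t_0>0$. By subtracting a suitable strictly convex perturbation we may assume that $(x_0,t_0)$ is the \emph{strict} and \emph{unique} maximizer of $\ov V-\psi$ on some box $K_{\de_0}$ as in \eqref{Kde}, so that Lemma \ref{cruci} applies. Set $p_0=D_x\psi(x_0,t_0)$. We must show
\[
\psi_t(x_0,t_0) + \ov H(x_0,p_0) \leq 0 .
\]
Assume by contradiction this fails, i.e. $\psi_t(x_0,t_0)+\ov H(x_0,p_0) = \theta > 0$. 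Fix an infinitesimal sequence $\eps_j$ and a small $\de<\de_0$; Lemma \ref{cruci} produces a constant $\rho_\de>0$ and supersolutions $w^j$ of $H(x_0,y,p_0,Du)=\ov H(x_0,p_0)$ on $\R^M$ satisfying \eqref{cruci001} and \eqref{cruci002}. The perturbed test function is
\[
\Phi_j(x,y,t) = \psi(x,t) + \eps_j\, w^j(y) .
\]
One checks, using the $C^1$-regularity of $\psi$, the supersolution property of $w^j$, and the Lipschitz bound \eqref{compara} on $H$ together with the rescaling in \eqref{Hje}, that for $\de$ and then $j$ chosen appropriately, $\Phi_j$ is a (viscosity) supersolution of the equation in \eqref{Hje} on the interior of $K_\de\times B(0,R_0)$ — here the only subtlety is that $w^j$ is merely locally Lipschitz, so one argues at points of supertangency of $\Phi_j$ to a smooth test function and passes the $\psi_t$ term through, absorbing the error coming from freezing $x$ at $x_0$ and $D_x\psi$ at $p_0$ into $\theta$ by shrinking $\de$. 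On the parabolic boundary $\partial\big(K_\de\times B(0,R_0)\big)$ the inequality \eqref{cruci001} gives $\Phi_j \geq V^{\eps_j} + \rho_\de$. The local comparison principle, Proposition \ref{comparacompara}, applied to $V^{\eps_j}$ (subsolution of \eqref{Hje}) and $\Phi_j$ (supersolution) then yields $V^{\eps_j} \leq \Phi_j - \rho_\de$ throughout $K_\de\times B(0,R_0)$. Evaluating at points approaching $(x_0, \text{(the minimizer of }w^j), t_0)$ — note $w^j$ is bounded near $y_0$ by \eqref{cruci002} — and taking the $\limsup$ in $j$, the $\eps_j w^j$ term vanishes and we get $\ov V(x_0,t_0) \leq \psi(x_0,t_0) - \rho_\de$, contradicting supertangency $\ov V(x_0,t_0)=\psi(x_0,t_0)$.

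\textbf{Initial condition.} Fix $x_0\in\R^N$ and $\de>0$. By definition of $\ov u_0$ in \eqref{initial} there is $\bar y$ with $u_0(x_0,\bar y) < \ov u_0(x_0)+\de$, and by continuity of $u_0$ this persists in a neighborhood. For $(x,t)$ near $(x_0,0)$ with $t>0$ small, we bound $V^\eps(x,y,t)$ from above by choosing a control that, by Lemma \ref{prestimobis} (or Lemma \ref{prestimo}), steers the fast variable from $y$ close to $\bar y$ by time $t/\eps$, while by \eqref{est1} the slow variable moves only $O(Q_0 t)$; the running cost contributes at most $\eps\cdot(t/\eps)\cdot C = C\,t$ with $C$ a local bound on $\ell$ along the (bounded) trajectory, and the terminal cost is close to $u_0(x_0,\bar y)$. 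Hence $V^\eps(x,y,t) \leq \ov u_0(x_0) + \de + O(t) + o_\eps(1)$, uniformly for $(x,t)$ in a fixed small neighborhood. Since this estimate is independent of $y$ up to the controllability time, passing to the $\limsup^\#$ and using $\de$ arbitrary gives \eqref{mainuno000}.

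\textbf{Main obstacle.} The delicate point is verifying that $\Phi_j$ is genuinely a supersolution of \eqref{Hje} on the relevant region despite $w^j$ being only locally Lipschitz and defined as a metric (intrinsic-distance) function — one must carefully handle supertangents, exploit that $w^j$ solves the frozen cell equation $H(x_0,y,p_0,Dw^j)=\ov H(x_0,p_0)$, and control the discrepancy between $H(x,y,D_x\psi,\cdot)$ and $H(x_0,y,p_0,\cdot)$ via \eqref{compara} on the small box $K_\de$, trading it against $\theta>0$. This is exactly where the noncompactness of the fast variable forces the use of the specially built $w^j$ of Subsection \ref{construct}, which are coercive in $y$ so that the comparison on $K_\de\times B(0,R_0)$ closes up on the $y$-boundary $K_\de\times\partial B(0,R_0)$ as well.
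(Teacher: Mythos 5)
Your architecture is the same as the paper's: contradiction hypothesis, correctors $w^j$ from Lemma \ref{cruci}, local comparison (Proposition \ref{comparacompara}) on $K_\de\times B(0,R_0)$, evaluation near the Aubry point $y_0$ where \eqref{cruci002} keeps $w^j$ bounded, and a steering argument via Proposition \ref{straestimo} and Lemma \ref{prestimobis} for \eqref{mainuno000}; the initial-condition half is essentially correct and matches the paper's. However, there is a genuine gap precisely in the step you yourself flag as the delicate one, namely the verification that $\Phi_j=\psi+\eps_j w^j$ is a supersolution of \eqref{Hje} on $K_\de\times B(0,R_0)$. Your proposed justification is to absorb the error of freezing $(x,D_x\psi)$ at $(x_0,p_0)$, estimated through \eqref{compara}, ``into $\theta$ by shrinking $\de$''. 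That error is of size $L_0\,|x-x_0|\,|q|+\om(|x-x_0|)$ with $q=D_y\phi$ at the touching point, and the gradients of the correctors are \emph{not} bounded uniformly in $j$: $w^j=d+S^h(y_0,\cdot)$ is built with the weight $h_\la$, $\la=\eps_j$, which equals $M_0\approx \sup U^j/\eps_j$ on the annulus $B(0,R_0)\setminus B(0,R_0-2)$, so there the Lipschitz constant of $w^j$ blows up like $1/\eps_j$. Since $\de$ must be fixed before sending $j\to\infty$ (the constant $\rho_\de$ in \eqref{cruci001} depends on it), the term $L_0\,\de\,|D_y\phi|$ cannot be traded against $\theta$, and your argument breaks down exactly in the regime of large fast momenta.

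The missing ingredient is the coercivity of $H$ in the fast momentum (Lemma \ref{astima}), which is how the paper closes this case: one fixes a $j$-independent threshold $P$ with $H(x,y,p,q)>\ov H(x,p)$ whenever $|q|\geq P$ (see \eqref{mainuno01}), shrinks $\de$ so that in addition $\psi_t(x,t)+\ov H(x,D\psi(x,t))>0$ on $K_\de$ (\eqref{mainuno2tris}, which follows from the contradiction hypothesis and continuity of $\ov H$, $D\psi$, $\psi_t$), and then checks the supersolution inequality in two cases: $|D_y\phi|<P$ by your freezing estimate, and $|D_y\phi|\geq P$ directly from coercivity, with no comparison to the frozen Hamiltonian at all. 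Your ``main obstacle'' paragraph invokes coercivity only of $w^j$ in $y$ (to control the $y$-boundary, which is in fact already encoded in \eqref{cruci001}), not coercivity of $H$ in $q$, so this case is genuinely absent. A smaller loose end: in the final evaluation you need points $(x_j,y_j,t_j)\to(x_0,y_0,t_0)$ \emph{along your chosen sequence} $\eps_j$ with $V^{\eps_j}(x_j,y_j,t_j)\to\ov V(x_0,t_0)$; this is not automatic for an arbitrary infinitesimal sequence, so you should select $\eps_j$ together with such a realizing sequence at the outset — possible because $\ov V$ is independent of the fast variable by Proposition \ref{perez}, which also lets you take $y_j\to y_0$ — and only then apply Lemma \ref{cruci} to that sequence.
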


\begin{proof} Let $(x_0,t_0)$ be a point  in $\R^N \times (0, + \infty)$, and $\psi$
a strict supertangent to $\ov V$ at $(x_0,t_0)$ such that
$(x_0,t_0)$ is the unique maximizer of $\ov V - \psi$ in
$K_{\de_0}$, for some $\de_0
>0$ (see \eqref{Kde} for the definition of $K_\de$).

 By Proposition \ref{perez}, we can find an infinitesimal sequence $\eps_j$
 and $(x_j,y_j,t_j)$ converging to $(x_0,y_0,t_0)$, where $y_0$
is as in \eqref{constrasse0}, with
\begin{equation}\label{mainuno1}
    \lim_j V^{\eps_j}(x_j,y_j, t_j) = \ov V(x_0,t_0)= \psi(x_0,t_0).
\end{equation}
We assume by contradiction
\begin{equation}\label{mainuno11}
    \psi_t(x_0,t_0) + \ov H(x_0,D \psi(x_0,t_0)) > 2 \, \eta
\end{equation}
for some positive $\eta$.  We apply Lemma \ref{astima}, about
coercivity of $H$, to the bounded set
\[C:= B(x_0,\de_0) \times B(0,R_0) \times D\psi(K_{\de_0}),\]
where $R_0$ satisfies \eqref{constrassu1}, \eqref{constrassu11}, and
exploit that $\ov H$ is locally bounded to find $P > 0$ with
\begin{equation}\label{mainuno01}
   H(x,y,p,q) > \ov H(x,p) \qquad\hbox{for $(x,y,p) \in C$, $q$ with
$ |q| \geq P$.}
\end{equation}
Applying the estimates   \eqref{compara} to $B(x_0,\de_0) \times
B(0,R_0)$ and \eqref{comparabis}, we find
\begin{eqnarray}
 && |H(x_0,y,D\psi(x_0,t_0),q) -H(x,y,p,q)| \leq  \label{mainuno51} \\
 && L_0 \, (|x-x_0| ) (|D\psi(x_0,t_0)| + |q|) +  \nonumber\\
 && \om(|x-x_0|) + Q \, |D \psi(x_0,t_0) -p| \nonumber
\end{eqnarray}
 for any $(x,y) \in B(x_0,\de_0) \times B(0,R_0)$ and  $(p,q) \in  \R^N \times \R^M$, where
 $\om$ is an uniform continuity modulus of $\ell$ in $B(x_0,\de_0) \times B(0,R_0) \times
 A$,  $L_0$ is as in {\bf (H2)} and $Q$ is an upper bound of $|f|$
 in $B(x_0,\de_0) \times B(0,R_0) \times A$.

Exploiting  the continuity of $D \psi$, $\psi_t$, $\ov H$,  we can
determine, $\de_0 > \de > 0$  such that using  \eqref{mainuno11},
\eqref{mainuno51} with $q \in B(0,P)$ and $p$ of the form $D
\psi(x,t)$, we get
\begin{eqnarray}
  |H(x_0,y,D \psi(x_0,t_0),q) - H(x,y,D \psi(x,t),q)| &<& \eta \label{mainuno2}\\
  |D\psi(x,t) - D\psi(x_0,t_0)| &<& \eta \label{mainuno2bis}\\
  \psi_t(x,t) + \ov H(x,D \psi(x,t)) &>& 0 \label{mainuno2tris}
\end{eqnarray}
for $(y,q) \in B(0,R_0) \times B(0,P)$, $(x,t) \in K_\de$. By
applying Lemma \ref{cruci} to such a $\de$, we  find a constant
$\rho_\de
>0$ and a family $w^j$ of supersolutions to
\[H(x_0,y,p_0,D\psi(x_0,t_0),Du)= \ov H(x_0, D\psi(x_0,t_0)) \qquad\hbox{in $\R^M$}\]
 with
\begin{eqnarray}
  \eps_j \, w^j &\geq& V^{\eps_j} - \psi + \rho_\de \qquad\hbox{in
$\partial \big (K_\de \times B(0,R_0) \big )$} \label{mainuno20} \\
  w^j &=& d + S_0(y_0,\cdot) \qquad\hbox{ in a neighborhood $A_0$ of
  $y_0$,} \label{mainuno21}
\end{eqnarray}
for $j$ large enough, see \eqref{constrasse0} for the definition of
$d$. We claim that the corrected test function $\psi +   w^j$
satisfies
\[  \psi_t(x,t) + H (x,y,D \psi(x,t), D w^j) \geq 0\]
in $K_\de \times B(0,R_0)$ in the viscosity sense. In fact, let
$\phi$ be a subtangent to $\psi +   w^j$ at some point  $(x,y,t) \in
K_\de \times B(0,R_0)$, then
\begin{eqnarray*}
  \phi_t(x,y,t) &=& \psi_t(x,t) \\
  D_x \phi(x,y,t) &=& D\psi(x,t)
\end{eqnarray*}
and so, to prove the claim, we have to show the inequality
\[ \psi_t(x,t) + H (x,y,D \psi(x,t), D_y \phi(x,y,t)) \geq 0.\]
We have that
\[ z \mapsto \phi(x,z,t)\]
is supertangent to  $w^j$ at $y$, which implies by the supersolution
property of $w^j$
\[H (x_0,y,D \psi(x_0,t_0), D_y \phi(x,y,t)) \geq \ov H(x_0,D\psi(x_0,t_0))\]
If $|D_y \phi(x,y,t)| < P$ then by \eqref{mainuno11},
\eqref{mainuno2} and \eqref{mainuno2bis}
\begin{eqnarray*}
  &\psi_t(x,t) + H (x,y,D \psi(x,t), D_y \phi(x,y,t)) \geq&  \\
   & \psi_t(x_0,t_0) - \eta  + H (x_0,y,D \psi(x_0,t_0), D_y \phi(x,y,t)) - \eta
   \geq&\\
   &\psi_t(x_0,t_0) + \ov H(x_0,D\psi(x_0,t_0)) - 2 \, \eta \geq 0.
\end{eqnarray*}
If instead $|D_y \phi(x,y,t)| \geq P$ then by \eqref{mainuno01},
\eqref{mainuno2tris}
\begin{eqnarray*}
  &\psi_t(x,t) + H (x,y,D \psi(x,t), D_y \phi(x,y,t)) \geq&  \\
   & \psi_t(x,t)   + \ov H(x,D\psi(x,t)) \geq 0.
\end{eqnarray*}
The claim is then proved. For $j$ large enough, the functions
$V^{\eps_j}$, $\psi + \eps_j \, w^j - \rho_\de$ are then
subsolutions and supersolutions, respectively, to
\[u_t + H \left (x,y,D_x u, \frac{D_y u}{\eps_j} \right )=0\]
in $K_\de \times B(0,R_0)$, then taking into account the boundary
inequality \eqref{mainuno20}, we can apply the comparison principle
of Proposition \ref{comparacompara} to the above equation to deduce
\begin{equation}\label{mainuno30}
     V^{\eps_j}  \leq \psi + \eps_j \, w^j - \rho_\de\qquad\hbox{in $K_\de \times
B(0,R_0)$.}
\end{equation}
 On the other side, let $(x_j,y_j,t_j)$ be the sequence converging to $(x_0,y_0,t_0)$
 introduced in \eqref{mainuno1}, then  for $j$ large $(x_j,y_j,t_j) \in K_\de \times B(0,R_0)$,
  and  $w^j(y_j)= d + S(y_0,y_j)$  by \eqref{mainuno21}, so that
 \[\lim_j \eps_j \, w^j(y_j)= 0.\]
 We therefore get
\[ \lim_j \big [ V^{\eps_j}(x_j,y_j,
t_j)-\psi(x_j,t_j) - \eps_j \, w^j(y_j) \big ] = \ov V(x_0,t_0)-
\psi(x_0,t_0)=0\] which contradicts \eqref{mainuno30}.

We proceed proving \eqref{mainuno000}. We consider $(x_n, t_n)$
converging to $(x_0,0)$ such that $\ov V(x_n,t_n)$ admits limit. Our
task is then to show
\[\lim_n \ov V(x_n,t_n) \leq \ov u_0(x_0).\]
We find   for any $n$ an infinitesimal sequence $\eps^n_j$ and
$(x^n_j, y^n_j,t^n_j)$ converging to $(x_n,0,t_n)$ with
\[\lim_j V^{\eps^n_j}(x^n_j, y^n_j,t^n_j)= \ov V(x_n,t_n),\]
$0 \in \R^M$ is clearly an arbitrary choice, in view of Proposition
\ref{perez}. By applying a diagonal argument we find $\eps_n$
converging to $0$ and $(z_n,y_n,s_n)$ converging to $(x_0,0,0)$ with
\begin{eqnarray}
  \lim_n V^{\eps_n}(z_n, y_n,s_n) &=& \lim_n \ov V(x_n,t_n) \label{maindue99} \\
  \lim_n \frac {s_n}{\eps_n} &=& + \infty. \label{maindue100}
\end{eqnarray}
Given $\de > 0$, we denote by $\widetilde y$ a $\de$--minimizer of
$y \mapsto u_0(x_0,y)$ in $\R^M$, see assumption {\bf (H5)}. By
applying Proposition \ref{straestimo},  Lemma \ref{prestimobis} and
taking into account \eqref{maindue100}, we find for any $n$
sufficiently large a trajectory $(\xi_n,\eta_n)$ of \eqref{dynabis},
with $\eps=\eps_n$, corresponding to controls $\al_n$ and  starting
at $(z_n,y_n)$, such  that
\begin{eqnarray}
   &(\xi_n, \eta_n) \; \hbox{is contained in a compact
    subset independent of $n$ as $t \in [0,s_n/\eps_n]$}& \label{maindue101}  \\
   &|\eta_n(s_n/\eps_n) - \widetilde y|= \OO (\eps_n)& \label{maindue102}
\end{eqnarray}
 By using formulation \eqref{minprob} of minimization problem, we
discover
\[V^{\eps_n}(z_n, y_n,s_n) \leq \eps_n \, \int_0^{\frac {s_n}{\eps_n}} \ell( \xi_n(t),
\eta_n(t), \al_n(t)) \, \dd t + u_0(\xi_n(s_n/\eps_n),
\eta_n(s_n/\eps_n)),\] where the integrand is estimated from above
by a constant, say $Q$, independent of $n$, because of
\eqref{maindue101}, therefore
\[ V^{\eps_n}(x_n, y_n,s_n) \leq Q \, s_n +  u_0(\xi_n(s_n/\eps_n),
\eta_n(s_n/\eps_n))\] Owing to \eqref{est1}, \eqref{maindue102},
\eqref{maindue99}, and the fact that $s_n$ is infinitesimal, we then
get
\[\lim_n \ov V(x_n,t_n)=\lim_n V^{\eps_n}(z_n, y_n,s_n) \leq u_0(x_0, \widetilde y)
\leq \ov u_0(x_0) + \de.\]
This concludes the proof because $\de$ is arbitrary.
\end{proof}

\medskip

The second main result concerns lower weak semilimit. Here we
essentially exploit the existence of bounded Lipschitz--continuous
subsolutions to  cell equations established in Proposition
\ref{Hkuno} plus the coercivity of the $V^\eps$ proved in
Proposition \ref{estimobis}. The part of the proof about behavior of
limit function at $t=0$ is direct and not based on a PDE approach.
We recall that $(\ov u_0)_\#$ stands for the lower semicontinuous
envelope of $\ov u_0$, see Subsection \ref{notations} for
definition.

\smallskip

\begin{Theorem}\label{maindue} The function  $\underline V={\liminf}_\# V^\eps$ is a
supersolution to
 \eqref{Hjlim} satisfying
 \begin{equation}\label{mainuno000bis}
    \liminf_{(x,t) \to (x_0,0) \atop t >0} \underline V(x,t) \geq (\ov u_0)_\#(x_0)
\qquad  \hbox{for any $x_0 \in \R^N $.}
\end{equation}
\end{Theorem}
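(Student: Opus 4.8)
The plan is to dualise the proof of Theorem \ref{mainuno}: the special coercive supersolutions used there as correctors are replaced by the bounded, globally Lipschitz subsolutions of the cell equation furnished by Proposition \ref{Hkuno}, and the boundary behaviour at $t=0$ is obtained by a direct control-theoretic estimate rather than by comparison. For the supersolution property, fix $(x_0,t_0)\in\R^N\times(0,+\infty)$; reducing as usual to a strict subtangent, let $\psi$ be $C^1$, a strict subtangent to $\underline V$ at $(x_0,t_0)$ with $(x_0,t_0)$ the unique minimizer of $\underline V-\psi$ in $\ov K_{\de_0}$ for some $\de_0<t_0$, and assume by contradiction $\psi_t(x_0,t_0)+\ov H(x_0,D\psi(x_0,t_0))<-2\eta$ for some $\eta>0$. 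Set $p_0=D\psi(x_0,t_0)$, $c_0=\ov H(x_0,p_0)$, and let $w$ be the bounded subsolution of $H(x_0,y,p_0,Dw)=c_0$ in $\R^M$ given by Proposition \ref{Hkuno}: it is globally Lipschitz, with Lipschitz constant $L$, and vanishes outside the compact set $K_0$. Fix an infinitesimal sequence $\eps_j$.

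The core step is to show that, for $\de<\de_0$ small and independent of $j$, the corrected test function $\psi+\eps_j\,w$ is a subsolution of the equation in \eqref{Hje} with $\eps=\eps_j$ on $K_\de\times\R^M$. At a supertangent $\phi$ to $\psi+\eps_j\,w$ at a point $(\ov x,\ov y,\ov t)\in K_\de\times\R^M$ one has $\phi_t=\psi_t(\ov x,\ov t)$, $D_x\phi=D\psi(\ov x,\ov t)$, and $q:=D_y\phi/\eps_j$ is a supertangent gradient of $w$ at $\ov y$, so $|q|\le L$ and, by the subsolution property of $w$, $H(x_0,\ov y,p_0,q)\le c_0$. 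By the coercivity \eqref{coe} of $\ell$ and the bound $|f|\le Q_0$, $H(x,y,D\psi(x,t),0)\to-\infty$ as $|y|\to+\infty$ uniformly for $(x,t)\in K_{\de_0}$, so one can fix once and for all a radius $R^{**}$, with $K_0\subset B(0,R^{**})$, such that $H(x,y,D\psi(x,t),0)<-\psi_t(x,t)$ for $|y|\ge R^{**}$, $(x,t)\in K_{\de_0}$; for $|\ov y|\ge R^{**}$ the corrector $w$ vanishes near $\ov y$, hence $q=0$, and the required inequality holds at once. For $|\ov y|\le R^{**}$ the continuity estimates \eqref{compara}, \eqref{comparabis}, applied on the \emph{fixed} compact set $\ov B(x_0,\de_0)\times\ov B(0,R^{**})$ and with $|q|\le L$, give, once $\de$ is small, $\psi_t(\ov x,\ov t)+H(\ov x,\ov y,D\psi(\ov x,\ov t),q)\le \psi_t(x_0,t_0)+c_0+\eta<-\eta<0$.

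Next, using the strict subtangency I fix $\rho_\de\in(0,1]$ with $\underline V-\psi\ge 3\rho_\de$ on $\partial K_\de$ and on the lower time face; together with the local equiboundedness of the $V^\eps$ (Proposition \ref{estimo}) and the definition of $\underline V$ as a lower weak semilimit, this gives $V^{\eps_j}>\psi+\eps_j\,w+\rho_\de$ on $\partial K_\de\times\ov B(0,R)$ and on that time face, for $j$ large. Then, for the fixed $\eps_j$, Proposition \ref{estimobis} supplies, for $R$ large enough, the lateral inequality $V^{\eps_j}>\psi+\eps_j\,w+\rho_\de$ on $\{|y|=R\}\times\ov K_\de$; applying the local comparison principle Proposition \ref{comparacompara} on $K_\de\times B(0,R)$ to the subsolution $\psi+\eps_j\,w+\rho_\de$ and the supersolution $V^{\eps_j}$ yields $\psi+\eps_j\,w+\rho_\de\le V^{\eps_j}$ in $K_\de\times B(0,R)$. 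Evaluating along a sequence $(x_j,y_j,t_j)\to(x_0,0,t_0)$ with $V^{\eps_j}(x_j,y_j,t_j)\to\underline V(x_0,t_0)=\psi(x_0,t_0)$ (Proposition \ref{perez}; the points lie in the cylinder for $j$ large since $y_j\to0$), and using $\eps_j\,w(y_j)\to0$ because $w$ is bounded, I reach $\psi(x_0,t_0)+\rho_\de\le\psi(x_0,t_0)$, a contradiction. For the behaviour at $t=0$: given $(x_n,t_n)\to(x_0,0)$ with $\underline V(x_n,t_n)$ convergent, a diagonal argument (using Proposition \ref{perez}) produces $\eps_n\to0$ and $(z_n,\omega_n,s_n)\to(x_0,0,0)$, $s_n>0$, with $V^{\eps_n}(z_n,\omega_n,s_n)\to\lim_n\underline V(x_n,t_n)$; along any control the slow trajectory issued from $z_n$ stays in $\ov B(z_n,Q_0 s_n)$ by \eqref{est1}, which lies in a fixed compact set of $\R^N$ for $n$ large, so \eqref{coe} gives a constant $P_0$ with $\ell\ge P_0$ there, while $u_0\ge\ov u_0$ by \eqref{initial}; hence $V^{\eps_n}(z_n,\omega_n,s_n)\ge P_0\,s_n+\inf_{\ov B(z_n,Q_0 s_n)}\ov u_0$ by \eqref{minprob}, and letting $n\to+\infty$, the ball shrinking to $\{x_0\}$ and $P_0 s_n\to0$, one gets \eqref{mainuno000bis}.

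The main obstacle is, as throughout the paper, the non-compactness of the fast variable, and it appears here at two points. First, the corrected test function has to be a subsolution on an unbounded set of the fast variable; this is handled because $w$ is constant outside the compact $K_0$ and $H(x,y,D\psi,0)\to-\infty$ by coercivity of $\ell$, so the delicate continuity estimates are confined to a fixed compact range of $y$. Second, the comparison principle is only available on bounded cylinders, whereas $V^\eps$ must be truncated in $y$ and its coercivity in $y$ (Proposition \ref{estimobis}) is the only device replacing a condition at $|y|=\infty$; the careful matching of the truncation radius with this coercivity — keeping in mind that the coercivity bound degrades as $\eps\to0$ — is the technically most delicate part of the argument.
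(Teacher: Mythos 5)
Your proof is correct and follows essentially the same route as the paper's: a perturbed test function $\psi+\eps\,w$ with the bounded Lipschitz subsolution of Proposition \ref{Hkuno} as corrector, the coercivity of the value functions from Proposition \ref{estimobis} to handle the lateral boundary $|y|=R$, the local comparison principle of Proposition \ref{comparacompara}, the sequence from Proposition \ref{perez} to reach the contradiction, and the same direct control--theoretic estimate (as in \eqref{estimo8}, with $u_0\geq \ov u_0$) for the behaviour at $t=0$. Your only deviation --- verifying the subsolution property of the corrected test function by splitting into $|y|\le R^{**}$, where \eqref{compara}, \eqref{comparabis} are applied on a fixed compact set with $|q|$ bounded by the Lipschitz constant of $w$, and $|y|\ge R^{**}$, where $w$ is locally constant so $q=0$ and $H(x,y,D\psi,0)$ is very negative by coercivity of $\ell$ --- is a harmless, indeed slightly more careful, way of obtaining the estimate the paper states directly on $K_\de\times\R^M$.
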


 \begin{proof}
Let $(x_0,t_0)$ be a point  in $\R^N \times (0, + \infty)$, and
$\varphi$ a strict subtangent to $\underline V$ at $(x_0,t_0)$ such that
$(x_0,t_0)$ is the unique minimizer of $\underline V - \varphi$ in
$K_{\de_0}$, for some $\de_0
>0$ (see \eqref{Kde} for the definition of $K_\de$).  We
 assume by contradiction
\begin{equation}\label{maindue11}
    \varphi_t(x_0,t_0) + \ov H(x_0,D \varphi(x_0,t_0)) < 0.
\end{equation}
  Given  $\eps >0$, we can find by
Proposition \ref{estimobis} about coercivity of value functions,
$R_\eps >1$ satisfying
\begin{equation}\label{maindue2}
  V^\eps(x,y,t) > \sup_{K_{\de_0}} \varphi + 1
\qquad\hbox{for $(x,t) \in K_{\de_0}$, $y \in \R^M \setminus
B(0,R_\eps)$.}
\end{equation}
We can also find, exploiting Proposition \ref{Hkuno},
 a Lipschitz--continuous subsolution $u$ to the cell problem
 \begin{equation}\label{maindue3}
    H(x_0,y,D\varphi(x_0,t_0),Du) = \ov H(x_0,D \varphi(x_0,t_0)) \qquad\hbox{in $\R^M$}
\end{equation}
with
\begin{equation}\label{maindue4}
   u(y) < 0 \, \qquad\hbox{ for any $y \in \R^M$.}
\end{equation}
By using   estimate \eqref{compara} on $H$, Lipschitz continuity of
$u$, continuity of $\ov H$, $D \varphi$, $\varphi_t$ and
\eqref{maindue11}, \eqref{maindue3} we can determine $0 < \de <
\de_0$ such that $u + \varphi $ is subsolution to
\[w_t + H(x,y,D\varphi(x,t),Dw) = 0   \qquad\hbox{in $K_\de \times \R^M$.}\]
Owing to strict subtangency property of $\varphi$, there is $1 >
\rho >0$ with
\[\underline V - \varphi > 2 \, \rho \qquad\hbox{in $\partial K_\de$,}\]
and, taking into account that $\underline V$ is the lower semilimit
of the $V^\eps$, we derive
\[V^\eps - \varphi > \rho \qquad\hbox{in $\partial K_\de \times
B(0,R_\eps)$}\] for $\eps$ sufficiently small,  which  in turn
implies by \eqref{maindue4}
\begin{equation}\label{maindue5}
    V^\eps - \varphi - u > \rho \qquad\hbox{in $\partial K_\de \times
B(0,R_\eps)$.}
\end{equation}
Owing to \eqref{maindue2},  \eqref{maindue4}, we also have
\begin{equation}\label{maindue6}
    V^\eps - \varphi - u > \rho  \qquad\hbox{in $ K_\de \times \partial
B(0,R_\eps)$.}
\end{equation}
Since $V^{\eps}$, $\varphi + \eps \, u + \rho$ are  supersolution and
subsolution, respectively, to
\[w_t + H \left (x,y,D_x w, \frac{D_y w}{\eps} \right )=0\]
in $K_\de \times B(0,R_0)$, the boundary conditions
\eqref{maindue5}, \eqref{maindue6} plus  the comparison principle in
Proposition \ref{comparacompara} implies
\begin{equation}\label{maindue7}
     V^{\eps}  \geq \varphi + \eps  \, u + \rho \qquad\hbox{in $K_\de \times
B(0,R_\eps)$, for $\eps$ small.}
\end{equation}
On the other side, there is by Proposition \ref{perez} an
infinitesimal sequence $\eps_j$ and a sequence $(x_j,y_j,t_j)$
converging to $(x_0,0,t_0)$
 with
 \[\lim_j V^{\eps_j}(x_j,y_j,t_j)= \underline V(x_0,t_0)\]
 and consequently
\[ \lim_j  \big [ V^{\eps_j}(x_j,y_j,
t_j)-\varphi(x_j,t_j) - \eps_j  \, u (y_j) \big ] = \ov V(x_0,t_0)-
\varphi(x_0,t_0)=0.\]  Taking into account that $R_\eps >1$ for any
$\eps$, and  $(x_j,y_j,t_j)$ are in $K_\de \times B(0,1)$ for $j$
large, the last limit relation contradicts \eqref{maindue7}.

We proceed proving \eqref{mainuno000bis}. We consider $(x_n, t_n)$
converging to $(x_0,0)$ such that $\underline V(x_n,t_n)$ admits limit,
with the aim  of showing
\[\lim_n \underline V(x_n,t_n) \geq (\ov u_0)_\#(x_0).\]
Arguing as in  the final part of Theorem \ref{mainuno}, we find an
infinitesimal sequence  $\eps_n$  and $(z_n,y_n,s_n)$ converging to
$(x_0,\widetilde y,0)$, for some $\widetilde y \in \R^M$, with
\[\lim_n V^{\eps_n}(z_n, y_n,s_n)= \lim_n \underline V(x_n,t_n).\]
We fix $\de >0$. Arguing  as in second half of Proposition
\ref{estimo}, see estimate \eqref{estimo8}, we determine a constant
$P_0$ independent of $n$ and trajectories $(\xi_n,\eta_n)$ of the
controlled dynamics starting at $(z_n,y_n)$   with
\[V^{\eps_n}(z_n, y_n,s_n) \geq P_0 \, s_n + u_0(\xi_n(s_n/\eps_n),
\eta_n(s_n/\eps_n)) - \de \geq P_0 \, s_n + \ov
u_0(\xi_n(s_n/\eps_n)) - \de.\] Since by the boundedness assumption
on $f$
\[|\xi_n(s_n/\eps_n) - z_n| \leq  Q_0 \, s_n,\]
 we get at the limit
\[\lim_n \underline V (x_n,t_n) =\lim_n V^{\eps_n}(z_n, y_n,s_n)
\geq \liminf_n  \ov u_0(\xi_n(s_n/\eps_n)) - \de \geq (\ov
u_0)_\#(x_0) -\de,\] which gives the assertion since $\de$ is
arbitrary.

 \end{proof}

\bigskip

\appendix

\section{Facts from weak KAM theory} \label{appendice}

 \parskip +3pt

Here we consider an Hamiltonian $F(y,q)$ defined in $\R^M \times
\R^M$ and  the family of equations

\begin{equation}\label{eq}
    F(y,Du)=b \qquad\hbox{in $\R^M$, for $b \in \R$}
\end{equation}

\small We assume $F$ to satisfy

\begin{enumerate}
    \item [] $F$ is continuous in both variables;
    \item []  $F$ is convex in $q$;
    \item [] $\lim_{|q| \to + \infty} \, \min_{y \in K} F(y,q)= + \infty$
    for any compact subset $K$  of $\R^M$.
\end{enumerate}
Our aim is to recall some basic facts of weak KAM theory, which will
be exposed here  through the so--called metric method for equation
\eqref{eq}, see \cite{F1}, \cite{FS1}, \cite{FS2},  \cite{F2}. We
define the critical value of $F$ as
\begin{equation}\label{cricri}
    c= \inf \{b \mid \eqref{eq} \;\;\hbox{has subsolutions in $\R^M$}\}.
\end{equation}
Being the ambient space non compact $c$ can also be infinite. We assume in what follows
\begin{enumerate}
    \item[] The critical value of $F$ is finite.
\end{enumerate}
\medskip
We call supercritical a value $b$ with $b \geq c$.  By stability properties of viscosity
(sub)solutions,  subsolutions for the critical equation do exist. We derive from coercivity of $F$:
\smallskip

\begin{Lemma}\label{equili} Let $b$ a supercritical value.
The subsolutions to $F=b$ are locally equiLipschitz--continuous.

\end{Lemma}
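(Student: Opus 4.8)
The plan is to prove local equiLipschitz continuity of subsolutions to $F=b$ by a standard coercivity argument: the coercivity hypothesis on $F$ forces the gradient of any subsolution to lie, pointwise a.e. (and hence, by convexity of the sublevels and the viscosity characterization, everywhere in the viscosity sense), in a compact set that is uniform on compact subsets of $\R^M$. From this a uniform local Lipschitz bound follows.

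\begin{proof}
Fix a supercritical value $b \geq c$ and a compact set $K \subset \R^M$; it suffices to produce a constant $\ell_K$ such that every subsolution $u$ of $F=b$ is $\ell_K$--Lipschitz on $K$. First I would invoke the coercivity assumption on $F$: applied to the compact set $\widehat K$ equal to the closed $1$--neighborhood of $K$, it yields a radius $M_K>0$ with
\[
F(y,q) > b \qquad\hbox{for all $y \in \widehat K$ and all $q$ with $|q| \geq M_K$.}
\]
Next, let $u$ be any subsolution of $F=b$. At any point $y_0 \in \interior \widehat K$ and for any $C^1$ function $\psi$ supertangent to $u$ at $y_0$, the subsolution inequality gives $F(y_0, D\psi(y_0)) \leq b$, whence $|D\psi(y_0)| < M_K$ by the displayed implication. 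Thus every supertangent to $u$ at an interior point of $\widehat K$ has gradient of norm $< M_K$. A subsolution is in particular locally Lipschitz once it is known to be locally bounded above with uniformly bounded supertangent slopes — but here I must first record that a viscosity subsolution of a continuous equation that admits supertangents at enough points is locally Lipschitz; the clean way is to note that the convex coercive structure forces $u$ to be locally Lipschitz (this is where one uses convexity of $F$ in $q$ together with coercivity, a standard fact), and then at a.e. point $D u(y)$ is an actual gradient that serves as a supertangent slope, so $|Du(y)| < M_K$ a.e. on $\interior\widehat K$.

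Given the a.e. gradient bound $|Du| \leq M_K$ on the convex-enough region $\widehat K \supset K$, I would conclude by integrating along segments: for $y_1, y_2 \in K$, the segment $[y_1,y_2]$ lies in $\widehat K$ since $\widehat K$ contains the $1$--neighborhood of $K$ and $\diam K$ may be covered by finitely many short segments inside $\widehat K$ if $K$ is not itself convex; along each such segment $|u(y_1)-u(y_2)| \leq M_K\,|y_1-y_2|$ by absolute continuity of $u$ on lines. Taking $\ell_K = M_K$ (or a fixed multiple of it accounting for the chaining over a non-convex $K$) gives the uniform local Lipschitz bound, independent of the particular subsolution $u$, which is exactly the asserted local equiLipschitz continuity.

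The main obstacle is the passage from ``every supertangent has small slope'' to ``$u$ is locally Lipschitz with that slope bound'': for a general upper semicontinuous subsolution one does not a priori have supertangents at every point, so one cannot argue pointwise. The resolution is the well-known fact that a viscosity subsolution of a convex coercive Hamilton--Jacobi equation is automatically locally Lipschitz (one may regularize by sup-convolution, or use the coercivity to bound difference quotients directly); once local Lipschitz continuity is in hand, Rademacher's theorem supplies a.e. differentiability and the coercivity bound applies at points of differentiability, after which the segment integration above finishes the argument.
\end{proof}
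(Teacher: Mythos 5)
The paper itself offers no proof of this lemma: it is stated in the appendix as recalled background (``We derive from coercivity of $F$''), the point being precisely that the local Lipschitz bound depends only on the compact set and on $b$, through the coercivity of $F$, and not on the particular subsolution. Your argument is the standard one and its outline is correct: coercivity on a neighbourhood of $K$ gives a radius $M_K$ with $F(y,q)>b$ for $|q|\geq M_K$, the viscosity inequality then confines every (super)differential of every subsolution to the ball $B(0,M_K)$, and integration along segments gives the uniform constant. Two remarks. First, your presentation is nearly circular at the key point: you invoke as a black box that a subsolution of a coercive equation is locally Lipschitz, and then recover the quantitative bound via Rademacher; but the standard proofs of that black box (comparison with the cones $y\mapsto u(y_1)+M_K|y-y_1|$ on small balls, or sup--convolution followed by an a.e.\ gradient bound uniform in the regularization parameter) already produce the constant $M_K$ directly, so the Rademacher detour adds nothing and the real work is hidden in the citation. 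If you want a self-contained proof, the cone comparison on balls $B(y_1,\rho)\subset\subset\R^M$ is the cleanest route, and note that convexity of $F$ in $q$ is not needed for this lemma --- coercivity alone suffices. Second, your chaining remark is shaky: if $K$ is disconnected, two points of $K$ cannot in general be joined inside the $1$--neighbourhood $\widehat K$, so the estimate $|u(y_1)-u(y_2)|\leq M_K|y_1-y_2|$ cannot be obtained by chaining there. The fix is trivial: either apply the coercivity bound on the convex hull of $K$ (still compact) and integrate along the straight segment, or simply state the equi-Lipschitz property on balls, which is all that ``locally equiLipschitz'' requires. With these adjustments your argument is a correct and complete proof of the statement the paper leaves unproved.
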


We adopt the so--called metric method
which is based on the definition of an intrinsic distance starting from the sublevels
of the Hamiltonian for any supercritical value. For any $b \geq c$ we set
\[ Z_b(y)= \{q \mid F(y,q) \leq b\} \qquad\hbox{
$y \in \R^M$.}\]

\smallskip

Owing to continuity, convexity and coercivity of $F$,  we have:

\smallskip

\begin{Lemma} For any $ b \geq c$, the multifunction  $y \mapsto Z_b(y)$ takes convex compact values,
it is in addition Hausdorff--continuous at any point $y_0$ where $\interior Z_b(y_0) \neq \emptyset$
and upper semicontinuous elsewhere.
\end{Lemma}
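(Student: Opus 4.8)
The plan is to establish the claims in the order: convex values, compact values, upper semicontinuity at every point, and the upgrade to Hausdorff continuity where the interior is nonempty; the first three are soft, and the last is a Slater--type perturbation argument in which the finiteness of the critical value plays a hidden but essential role.

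For convexity and compactness: with $y$ fixed, $Z_b(y)$ is a sublevel set of the convex function $q\mapsto F(y,q)$, hence convex, and it is closed by continuity of $F$. Applying the coercivity hypothesis with $K=\{y\}$ yields $R_y$ with $F(y,q)>b$ for $|q|>R_y$, so $Z_b(y)\subset\ov B(0,R_y)$ is bounded; thus each $Z_b(y)$ is compact and convex.

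For upper semicontinuity at an arbitrary $y_0$ (no interior hypothesis is needed here): fix an open $U\supset Z_b(y_0)$. By coercivity applied to the compact set $\ov B(y_0,1)$ there is $R$ with $F(y,q)>b$ whenever $|y-y_0|\le 1$ and $|q|>R$, so $Z_b(y)\subset\ov B(0,R)$ for all such $y$. If the conclusion failed we would get $y_n\to y_0$ and $q_n\in Z_b(y_n)\setminus U$; by the uniform bound, along a subsequence $q_n\to q$ with $q$ in the closed complement of $U$, while joint continuity of $F$ gives $F(y_0,q)=\lim_n F(y_n,q_n)\le b$, so $q\in Z_b(y_0)\subset U$, a contradiction. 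The same uniform estimate also shows $Z_b(y)=\emptyset$ for $y$ near $y_0$ when $Z_b(y_0)=\emptyset$, so this covers the ``elsewhere'' part of the statement.

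For the last claim, at $y_0$ with $\interior Z_b(y_0)\neq\emptyset$ I would prove lower semicontinuity and combine it with the above to get continuity in the Hausdorff metric (for nonempty compact values, upper plus lower semicontinuity is equivalent to Hausdorff continuity; nonemptiness of $Z_b(y)$ near $y_0$ comes along in the non-degenerate case below). Since $Z_b(y_0)$ is convex with nonempty interior it equals $\ov{\interior Z_b(y_0)}$, so lower semicontinuity reduces to: given $\ov q\in\interior Z_b(y_0)$ and $\delta>0$ with $\ov B(\ov q,2\delta)\subset Z_b(y_0)$, one has $Z_b(y)\cap\ov B(\ov q,2\delta)\neq\emptyset$ for $y$ near $y_0$, that is, $\mu(y):=\min_{q\in\ov B(\ov q,2\delta)}F(y,q)\le b$ for $y$ near $y_0$. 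The function $\mu$ is continuous, being the minimum of a continuous function over a fixed compact set, and $\mu(y_0)\le F(y_0,\ov q)\le b$; hence if $\mu(y_0)<b$ continuity of $\mu$ closes the argument at once. The main obstacle is the borderline case $\mu(y_0)=b$: then $F(y_0,\cdot)\equiv b$ on $\ov B(\ov q,2\delta)$, so $b=\min_q F(y_0,q)$, and continuity of $\mu$ alone no longer prevents $\mu(y)>b$ near $y_0$ — indeed $F(y,q)=b+|y-y_0|+\max\{0,\,d(q,\ov B(\ov q,2\delta))\}$ satisfies all hypotheses except finiteness of the critical value, and has $Z_b(y)=\emptyset$ for $y\neq y_0$. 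Precisely here I would invoke the standing assumption that the critical value of $F$ is finite: a subsolution $w$ of $F=c$, which exists by stability and is locally Lipschitz by Lemma \ref{equili}, satisfies $Dw(y)\in Z_c(y)\subset Z_b(y)$ at a dense set of points, and mollifying $w$ together with the upper semicontinuity already proved forces $Z_b(y)$ to stay near $Z_b(y_0)$ around $y_0$, excluding the degenerate jump. Carrying this step through cleanly — reconciling the $o(1)$ slack generated by mollification with the exact level $b$ — is the point I expect to be the crux of the argument.
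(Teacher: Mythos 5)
Your treatment of convexity, compactness and upper semicontinuity is correct and complete (the paper itself gives no proof of this lemma, stating it as a standard consequence of continuity, convexity and coercivity, so there is no written argument to compare with). The genuine problem is the step you yourself flag as the crux: it is not merely unfinished, it cannot be finished, because in the degenerate case $\min_q F(y_0,q)=b$ the assertion is false even under the standing assumption that the critical value is finite. Take $M=1$ and
\[
F(y,q)=\max\{|q|-1,0\}+|y|\,|q| .
\]
This $F$ is continuous, convex and coercive in $q$ uniformly on compact sets of $y$; the null function is a subsolution of $F=0$ and $F\geq 0$ everywhere, so $c=0$ is finite. For $b=c=0$ one has $Z_0(0)=\overline B(0,1)$, which has nonempty interior, while $Z_0(y)=\{0\}$ for every $y\neq 0$; hence $y\mapsto Z_0(y)$ is upper semicontinuous but not Hausdorff--continuous at $y_0=0$. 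Your proposed rescue through a critical subsolution $w$ (locally Lipschitz, $Dw\in Z_b$ a.e., then mollify) detects nothing here: $w\equiv 0$ is a perfectly good critical subsolution compatible with the collapse of $Z_0(y)$ to $\{0\}$, so no amount of bookkeeping of the mollification error can exclude the jump. In other words, finiteness of $c$ rules out your first counterexample but not the phenomenon itself.

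The correct reading of the continuity statement — and the only one actually used in the paper, e.g.\ in Proposition \ref{cortre}, where one works at points with $H_0(y,0)<c_0$ strictly — is under the Slater--type condition that the strict sublevel $\{q\mid F(y_0,q)<b\}$ is nonempty (which implies, but is strictly stronger than, $\interior Z_b(y_0)\neq\emptyset$). In that case your own ``easy'' branch closes the argument: for $\overline q\in Z_b(y_0)$ and $\epsilon>0$, a convex combination with a Slater point gives $q'$ with $|q'-\overline q|<\epsilon$ and $F(y_0,q')<b$, hence $q'\in Z_b(y)$ for $y$ close to $y_0$; a finite covering of the compact set $Z_b(y_0)$ makes the neighborhood of $y_0$ uniform in $\overline q$, and together with the upper semicontinuity and the uniform bound on $Z_b(y)$ near $y_0$ this yields Hausdorff continuity (your continuity-of-$\mu$ argument is an equivalent formulation). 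So the fix is not a harder proof but a sharper hypothesis: state and prove the continuity part at points where the strict sublevel is nonempty, and keep only upper semicontinuity at flat points.
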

\smallskip
 We further set
\[\si_b(y,v) = \max \{q \cdot v \mid q \in Z_b(y)\} \qquad\hbox{for  any
$y$, $v$ in $\R^M$},\]
namely the support function of $Z_b(y)$ at $q$,
and define for any curve $\xi$ defined in $[0,1]$ the associated intrinsic length via
\[\int_0^1 \si_b(\xi, \dot\xi) \, \dd s.\]
Notice that the above integral is invariant for
orientation-preserving change of parameter, being the support
function positively homogeneous and subadditive, as a  length
functional should be.  Also notice that because of this invariance
the choice of the interval $[0,1]$ is not restrictive.  For any pair
$y_1$, $y_2$ we define the intrinsic distance as
\[S_b(y_1,y_2) = \inf \left \{\int_0^1 \si_b(\xi, \dot\xi) \, \dd s \mid \xi \;\hbox{ with
$\xi(0)=y_1$, \, $\xi(1)=y_2$} \right \}.\] The intrinsic distance
is finite for any supercritical value $b$.

\smallskip

\begin{Proposition}\label{subsubsub} Given $b \geq c$, we have
\begin{itemize}
    \item [\bf (i)] a function $u$ is a subsolution to $F=b$ if and only if
    \[u(y_2)-u(y_1) \leq S_b(y_1,y_2) \qquad\hbox{for any $y_1$, $y_2$;}\]
    \item [\bf (ii)] for any fixed $y_0$, the function
     $y \mapsto S_b(y_0,y)$ is subsolution to $F=b$ in $\R^M$ and solution in
      $\R^M \setminus \{y_0\}$;
     \item [\bf (iii)] Let $C$, $w$ be a closed set of $\R^M$ and a function defined in $C$ satisfying
\[w(y_2)- w(y_1) \leq S_b(y_1,y_2) \qquad\hbox{for any $y_1$, $y_2$ in $C$}\]
then the function
\[y \mapsto  \inf \{w(z) + S_b(z,y) \mid z \in C\}\]
is subsolution to $F=b$ in $\R^M$, solution in $\R^M \setminus C$
and  equal to $w$ in $C$.
\end{itemize}
\end{Proposition}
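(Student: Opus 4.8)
The plan is to reduce everything to a few standard facts of the metric method (see \cite{F1}, \cite{FS1}, \cite{FS2}, \cite{F2}): for a supercritical value $b$ the intrinsic distance $S_b$ is real--valued and locally Lipschitz--continuous, it satisfies the triangle inequality and $S_b(y,y)=0$; the sublevel $Z_b(y)$ is closed and convex with support function $\si_b(y,\cdot)$, so that $q\in Z_b(y)$ if and only if $q\cdot v\le\si_b(y,v)$ for every $v$; the multifunction $Z_b$ is upper semicontinuous; and a locally Lipschitz--continuous function satisfying $F(y,Du)\le b$ almost everywhere is automatically a viscosity subsolution, since its mollifications are classical subsolutions up to an infinitesimal error, by convexity of $F$ and Jensen's inequality. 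I also use that subsolutions are locally Lipschitz (Lemma \ref{equili}).

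I would prove (i) in two steps. If $u$ is a subsolution it is locally Lipschitz, hence differentiable a.e. with $F(y,Du(y))\le b$, i.e. $Du(y)\in Z_b(y)$; along any Lipschitz curve $\xi$ on $[0,1]$ the function $u\circ\xi$ is Lipschitz and, for a.e. $s$, $\frac{d}{ds}u(\xi(s))=p(s)\cdot\dot\xi(s)$ for some $p(s)$ in the Clarke subdifferential of $u$ at $\xi(s)$, which is contained in $Z_b(\xi(s))$ by upper semicontinuity and convexity of $Z_b$; hence $\frac{d}{ds}u(\xi(s))\le\si_b(\xi(s),\dot\xi(s))$, and integrating and taking the infimum over curves joining $y_1$ to $y_2$ gives $u(y_2)-u(y_1)\le S_b(y_1,y_2)$. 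Conversely, the bounds $u(y_2)-u(y_1)\le S_b(y_1,y_2)$ and $u(y_1)-u(y_2)\le S_b(y_2,y_1)$ together with the local Lipschitz estimate on $S_b$ make $u$ locally Lipschitz; at a differentiability point $y_0$ and any $v\in\R^M$, testing the inequality with the segment from $y_0$ to $y_0+tv$ gives $u(y_0+tv)-u(y_0)\le S_b(y_0,y_0+tv)\le t\int_0^1\si_b(y_0+stv,v)\,\dd s$, and letting $t\to 0^+$, using upper semicontinuity of $\si_b(\cdot,v)$, yields $Du(y_0)\cdot v\le\si_b(y_0,v)$; thus $Du(y_0)\in Z_b(y_0)$ a.e., and the mollification argument concludes.

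For (ii) and (iii) it is economical to regard (ii) as the instance $C=\{y_0\}$, $w\equiv0$ of (iii), so that $u(y):=\inf\{w(z)+S_b(z,y)\mid z\in C\}$ becomes $S_b(y_0,\cdot)$. Finiteness of $u$ is elementary from the triangle inequality; the identity $u=w$ on $C$ follows from $S_b(y,y)=0$ and the hypothesis on $w$, namely $u(y)\le w(y)+0$ and $u(y)\ge\inf_z\{w(z)+(w(y)-w(z))\}=w(y)$ for $y\in C$; and $u$ is a subsolution in $\R^M$ because $u(y_2)\le w(z)+S_b(z,y_1)+S_b(y_1,y_2)$ for every $z\in C$, so $u(y_2)-u(y_1)\le S_b(y_1,y_2)$ and item (i) applies. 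For the supersolution property on $\R^M\setminus C$ I would argue by a Perron--type bump. By item (i) any subsolution $\widetilde w$ of $F=b$ that is $\le w$ on $C$ satisfies $\widetilde w(y)\le\widetilde w(z)+S_b(z,y)\le w(z)+S_b(z,y)$ for all $z\in C$, hence $\widetilde w\le u$; so $u$ is the \emph{maximal} subsolution of $F=b$ lying below $w$ on $C$. If $u$ failed to be a supersolution at some $y_1\in\R^M\setminus C$, there would be a $C^1$ subtangent $\psi$ to $u$ at $y_1$ with $F(y_1,D\psi(y_1))<b$; then, for $\de$, $\de'$ small enough and a ball $B(y_1,r)$ disjoint from $C$, the function $\max\{u,\psi+\de-\de'|y-y_1|^2\}$ (glued to $u$ outside $B(y_1,r)$, which is legitimate since the bump lies strictly below $u$ near $\partial B(y_1,r)$) would be a subsolution, still $\le w$ on $C$ as it coincides with $u$ there, yet strictly larger than $u$ at $y_1$, contradicting maximality.

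The step I expect to require the most care is the forward direction of item (i), specifically the chain--rule estimate $\frac{d}{ds}u(\xi(s))\le\si_b(\xi(s),\dot\xi(s))$ a.e. along a Lipschitz curve: it rests on the Clarke subdifferential of a locally Lipschitz subsolution being absorbed by the closed convex sublevel $Z_b$, which in turn uses upper semicontinuity of $Z_b$ and $F(z,Du(z))\le b$ wherever $u$ is differentiable. The remaining delicate inputs---finiteness and local Lipschitz regularity of $S_b$ for supercritical $b$, and the mollification passage from almost everywhere subsolution to viscosity subsolution---are genuinely standard in the metric and weak KAM framework, and I would simply invoke them from \cite{F1}, \cite{FS1}, \cite{FS2}, \cite{F2}. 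The Perron bump used for (ii)--(iii), by contrast, is routine once item (i) furnishes the maximality characterization, and it has the advantage of not requiring any control on the location of minimizing curves.
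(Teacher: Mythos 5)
Your proof is correct, but note that the paper itself offers no argument for Proposition \ref{subsubsub}: the appendix explicitly presents it as recalled background from the metric/weak KAM literature, with \cite{F1}, \cite{FS1}, \cite{FS2}, \cite{F2} as references, so there is no internal proof to match. Your reconstruction of item {\bf (i)} follows the standard route of those references (a.e.\ inclusion $Du(y)\in Z_b(y)$, absorption of the Clarke subdifferential into the convex sublevel, chain rule along Lipschitz curves for one direction; segments plus upper semicontinuity of $\si_b(\cdot,v)$ and a convexity--mollification step for the other), and the ingredients you invoke without proof --- finiteness of $S_b$, the triangle inequality, $S_b(y,y)=0$, upper semicontinuity of $Z_b$ --- are exactly the facts the paper also records without proof; just be aware that $S_b(y,y)\geq 0$ itself rests on the existence of a subsolution at level $b$ (available for $b\geq c$, at $b=c$ by stability) fed into the forward half of {\bf (i)}, so the logical order is: forward implication first, then the sign of $S_b$ on the diagonal. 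Where you genuinely depart from the usual treatment is the supersolution part of {\bf (ii)}--{\bf (iii)}: the references verify it directly from the representation formula, via a dynamic-programming decomposition of near-optimal curves across a small sphere around the test point, whereas you first identify $u=\inf_{z\in C}\{w(z)+S_b(z,\cdot)\}$ as the maximal subsolution below $w$ on $C$ and then run a Perron-type bump at a putative failure point. This is a legitimate and arguably cleaner route --- it needs no control on where minimizing curves go, only the local comparison of gradients near $y_1$ --- at the price of the usual bookkeeping in the bump (choose $r$ so that $B(y_1,r)\cap C=\emptyset$ and $\psi\leq u$ there, then $\de'$ small for the classical subsolution property of $\psi+\de-\de'|y-y_1|^2$, then $\de<\de'(r/2)^2$ so the glued function equals $u$ near $\partial B(y_1,r)$); spelling out that order of choices, and citing \cite{C} for the chain rule along Lipschitz curves, would make the write-up complete.
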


\smallskip

In contrast to what happens when the ambient space is compact,
namely $F=b$ admits solutions in the whole space if and only if $b=c$, in the
noncompact case instead there are solutions for any supercritical equation. It is in fact
enough that the intrinsic length is finite, as always is the case for supercritical values, to get
a solution.

The construction of such a solution is in fact quite simple. One considers a  sequence $y_n$
with $|y_n|$ diverging and the functions
\[ u_n= S_b(y_n, \cdot) - S_b(y_n,0).\]
By Lemma \ref{equili} and Proposition \ref{subsubsub} the $u_n$ are
solutions except at $y_n$, are locally equiLipschitz--continuous,
and also equibounded, since they vanish at $0$. They therefore
converge, up to a subsequence, by Ascoli Theorem. Having swept away
the bad (in the sense of Proposition \ref{subsubsub} {\bf (ii)})
points $y_n$ to infinity, but kept the solution property by
stability properties of viscosity solutions under uniform
convergence, we see that the limit function is indeed the sought
solution of $F=b$.

\medskip

We say that a function $u$ is a strict subsolution to $F=b$ in some
open set $B$ if
\[F(x,Du) \leq b -\de \qquad\hbox{for some $\de >0$, in the viscosity
sense in $B$.}\]

\smallskip

The points satisfying the equivalent  properties stated in the
following proposition, make up the so--called Aubry set, denoted by
$\A$.

\smallskip

\begin{Proposition}\label{OhAubry} Given $y_0 \in \R^M$, the following three properties are equivalent:
\begin{itemize}
    \item [\bf(i)] there exists a sequence of cycles $\xi_n$ based on $y_0$ and defined in $[0,1]$
    with
    \[ \inf_n \int_0^1 \si_c(\xi_n, \dot\xi_n) \, \dd s =0 \quad\hbox{and} \quad
  \inf_n   \int_0^1 |\dot\xi_n| \, \dd s > 0;\]
    \item [\bf(ii)] $y \mapsto S_c(y_0,y)$ is solution to $F=c$ in the whole of $\R^M$;
    \item [\bf(iii)] if a function $u$  is a strict critical
    subsolution in a neighborhood of $y_0$, then $u$ cannot be subsolution to $F=c$
in $\R^M$.
\end{itemize}
\end{Proposition}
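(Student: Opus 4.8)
The three statements are the classical characterizations of the Aubry set in the metric version of weak KAM theory, and I would prove them cyclically, say in the order (i)$\Rightarrow$(iii)$\Rightarrow$(ii)$\Rightarrow$(i). Two tools are used throughout. First, the metric dictionary of Proposition \ref{subsubsub}: a (locally Lipschitz, by Lemma \ref{equili}) function $u$ is a subsolution of $F=b$ iff $u(\xi(1))-u(\xi(0))\le\int_0^1\sigma_b(\xi,\dot\xi)\,\dd s$ along every Lipschitz curve $\xi$. Second, a \emph{gap lemma}: for every compact $K$ and $\de>0$ there is $\kappa=\kappa(K,\de)>0$ with $Z_{c-\de}(y)+\ov B(0,\kappa)\subseteq Z_c(y)$ for $y\in K$ (a routine compactness argument from the continuity and coercivity of $F$), so that $\sigma_{c-\de}(y,v)\le\sigma_c(y,v)-\kappa|v|$ on $K$; hence any $v$ which is a strict subsolution ($F(\cdot,Dv)\le c-\de$ in the viscosity sense) on an open set $\Om$ is a subsolution of $F=c-\de$ there, and satisfies $u(\xi(1))-u(\xi(0))\le\int_0^1\sigma_c(\xi,\dot\xi)\,\dd s-\kappa\int_0^1|\dot\xi|\,\dd s$ along curves contained in a compact subset of $\Om$. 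I would also record at the outset that, since $F=c$ has a Lipschitz subsolution $u_\star$, one has $\int_0^1\sigma_c(\xi,\dot\xi)\,\dd s\ge u_\star(\xi(1))-u_\star(\xi(0))$, so $\int\sigma_c\ge0$ on every cycle and $S_c(y,y)=0$; and that, by Proposition \ref{subsubsub}(ii), $y\mapsto S_c(y_0,y)$ is always a subsolution of $F=c$ and a solution in $\R^M\setminus\{y_0\}$, so that \textbf{(ii)} amounts exactly to this function being a supersolution at the single point $y_0$.

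For (i)$\Rightarrow$(iii): let $u$ be a subsolution of $F=c$ on $\R^M$ which is strict on $B(y_0,2r)$, and let $\xi_n$ be cycles based at $y_0$ with $\int_0^1\sigma_c(\xi_n,\dot\xi_n)\,\dd s\to0$ and $\int_0^1|\dot\xi_n|\,\dd s\ge a>0$. Decompose each $\xi_n$ into the sub-arcs lying inside $\ov B(y_0,r)$ and those lying outside, apply the strict inequality above on the inside arcs and plain subsolutionarity on the outside ones, and sum (the $u$-values telescope): $0=u(\xi_n(1))-u(\xi_n(0))\le\int_0^1\sigma_c(\xi_n,\dot\xi_n)\,\dd s-\kappa\,\ell_n$, where $\ell_n$ is the total Euclidean length of $\xi_n\cap\ov B(y_0,r)$. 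Since $\xi_n$ is a cycle based at the centre $y_0$, one has $\ell_n\ge\min(a,2r)>0$: either $\xi_n$ never leaves $\ov B(y_0,r)$, and then $\ell_n$ equals its full length $\ge a$, or it does, and then the first descent to $\partial B(y_0,r)$ and the last ascent from it each contribute at least $r$. Hence $\int_0^1\sigma_c(\xi_n,\dot\xi_n)\,\dd s\ge\kappa\min(a,2r)>0$, contradicting $\int\sigma_c\to0$; so no such $u$ can exist, which is exactly \textbf{(iii)}.

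For (iii)$\Rightarrow$(ii): if $y\mapsto S_c(y_0,y)$ were not a supersolution at $y_0$, there would be a $C^1$ subtangent $\phi$ with $F(y_0,D\phi(y_0))<c$; replacing $\phi$ by $\phi(y)-|y-y_0|^2$ and rescaling we may assume $\phi(y_0)=0$, $\phi(y)<S_c(y_0,y)$ for $0<|y-y_0|\le2r$, and $F(\cdot,D\phi)\le c-2\de$ on $\ov B(y_0,2r)$. Put $\eps=\frac12\min_{|y-y_0|=r}\bigl(S_c(y_0,\cdot)-\phi\bigr)>0$ and define $v=\max\bigl(\phi+\eps,\,S_c(y_0,\cdot)\bigr)$ on $\ov B(y_0,r)$ and $v=S_c(y_0,\cdot)$ outside. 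Then $v$ is continuous (on $\partial B(y_0,r)$ one has $S_c(y_0,\cdot)>\phi+\eps$, so the two formulas agree there) and a subsolution of $F=c$ on $\R^M$ (inside the ball a maximum of two subsolutions, near and outside the sphere just $S_c(y_0,\cdot)$), while on a neighbourhood of $y_0$ it coincides with $\phi+\eps$ and hence is strict there — contradicting \textbf{(iii)}.

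The remaining implication (ii)$\Rightarrow$(i) is the delicate one, and it is the main obstacle: here the soft metric dictionary does not suffice. If $y\mapsto S_c(y_0,y)$ is a solution of $F=c$ on all of $\R^M$, then $S_c(y_0,y_0)=0=\inf\{\int_0^1\sigma_c(\xi,\dot\xi)\,\dd s:\xi\text{ a cycle at }y_0\}$, so there are cycles $\xi_n$ at $y_0$ with $\int\sigma_c(\xi_n,\dot\xi_n)\to0$; what must be shown is that their Euclidean lengths cannot all collapse to $0$. The natural route is to invoke the compactness and stability apparatus of weak KAM theory: a critical solution carries, through every point, a calibrated ray, and pulling back the ray based at $y_0$ produces non-degenerate cycles of vanishing $\sigma_c$-length, which is precisely \textbf{(i)}. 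Equivalently, one argues by contradiction — if all low-cost cycles at $y_0$ concentrated at $y_0$, then, using the gap lemma on a fixed small ball together with the Lipschitz subsolution $-S_c(\cdot,y_0)$ and convex combinations of subsolutions, one could push $S_c(y_0,\cdot)$ strictly below $0$ near $y_0$, violating the supersolution inequality at $y_0$ (a strong-maximum-principle type argument). This step is exactly where one genuinely needs weak KAM theory rather than an elementary estimate; the other two implications are, as sketched, soft once Proposition \ref{subsubsub} and the gap lemma are available.
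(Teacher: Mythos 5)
Your implications (i)$\Rightarrow$(iii) and (iii)$\Rightarrow$(ii) are sound: the ``gap lemma'' argument, the decomposition of a cycle into the arcs inside and outside $\ov B(y_0,r)$, and the construction $v=\max(\phi+\eps,S_c(y_0,\cdot))$ glued to $S_c(y_0,\cdot)$ across $\partial B(y_0,r)$ are exactly the standard metric-method arguments, and they only use facts the paper records (Lemma \ref{equili}, Proposition \ref{subsubsub}, maximum of subsolutions is a subsolution). Note, for what it is worth, that the paper itself does not prove this proposition: it is quoted in the appendix from the weak KAM literature (Fathi, Fathi--Siconolfi), so there is no internal proof to compare with; your attempt has to stand on its own.

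And there it has a genuine gap: the implication (ii)$\Rightarrow$(i) is not proved, and the two routes you gesture at do not close it as stated. The observation $S_c(y_0,y_0)=0$ gives cycles of vanishing intrinsic length for free (constant cycles do it), so the entire content of (i) is the lower bound on the Euclidean lengths, and ``a critical solution carries a calibrated ray through every point'' begs precisely this question: at the single point $y_0$ the backward calibrated object could a priori be the constant curve, and the existence of a \emph{nonconstant} calibrated arc ending at $y_0$ is equivalent to what must be shown. The honest way to finish along your line is to prove a superoptimality (backward reachability) principle for supersolutions: if $u$ solves $F=c$ in $B(y_0,R)$ then for $r<R$ there is $z\in\partial B(y_0,r)$ with $u(y_0)\ge u(z)+S_c(z,y_0)$; applied to $u=S_c(y_0,\cdot)$ under (ii) this yields $S_c(y_0,z)+S_c(z,y_0)\le 0$ for some $z$ at distance $r$, and concatenating near-optimal curves $y_0\to z\to y_0$ gives cycles of Euclidean length at least $2r$ and intrinsic length at most $\eps$, i.e.\ (i). This principle is true but is itself a nontrivial piece of the theory (it is of the same nature as Proposition \ref{postohAubry}) and cannot be replaced by an appeal to ``compactness and stability''. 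Your alternative sketch (``push $S_c(y_0,\cdot)$ strictly below $0$ near $y_0$'') is likewise not an argument: what the contradiction route actually requires is the Fathi--Siconolfi construction showing that failure of the cycle condition produces a global critical subsolution which is strict near $y_0$ (i.e.\ $\neg$(i)$\Rightarrow\neg$(iii)), together with (ii)$\Rightarrow$(iii); both again require substantive work that the proposal does not supply. As it stands, the equivalence is only two-thirds proved.
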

\smallskip

 Notice that, in contrast with the compact case,
even if the critical value is finite, the Aubry set can be empty for
Hamiltonian defined in $\R^M \times \R^M$. We derive from
Proposition \ref{OhAubry} {\bf (iii)} adapting the same argument of
Lemma \ref{lemHkdue}:

\smallskip

\begin{Corollary}\label{corohAubry} Assume that the Aubry set is empty,
then for any bounded open set $B$ of $\R^M$, there is a critical
subsolution which is strict in $B$.
\end{Corollary}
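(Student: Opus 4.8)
The plan is to mimic the covering argument used in the proof of Lemma \ref{lemHkdue}, taking advantage of the characterization of the Aubry set given in Proposition \ref{OhAubry} \textbf{(iii)}. First I would fix a bounded open set $B \subset \R^M$ and enlarge it slightly so that $\ov B$ is compact; it is enough to produce a critical subsolution that is strict in $\ov B$. Since $\A = \emptyset$, every point $y \in \ov B$ fails property \textbf{(iii)} of Proposition \ref{OhAubry}, which by the equivalence stated there means: there is a function $w_y$ that is a strict critical subsolution in some open neighborhood $B_y$ of $y$ and is \emph{also} a critical subsolution on all of $\R^M$. Thus to each $y \in \ov B$ we attach the pair $(B_y, w_y)$ together with a constant $d_y < c$ such that $F(\cdot, Dw_y) \le d_y$ in $B_y$ while $F(\cdot, Dw_y) \le c$ in $\R^M$.

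Next I would extract, by compactness of $\ov B$, a finite subcover $B_1, \dots, B_m$ corresponding to points $y_1, \dots, y_m$, and write $w_j = w_{y_j}$, $d_j = d_{y_j}$. I then form the convex combination
\[
w = \frac1m \sum_{j=1}^m w_j,
\]
which is locally Lipschitz (by Lemma \ref{equili}) and, by convexity of $F$ in $q$, satisfies $F(y, Dw(y)) \le \frac1m\sum_j F(y, Dw_j(y))$ for a.e.\ $y$. For a.e.\ $y \in \ov B$, at least one index $j$ has $y \in B_j$, so the corresponding term is bounded by $d_j$ while the others are bounded by $c$; hence
\[
F(y, Dw(y)) \le \frac{m-1}{m}\, c + \frac1m\, d_j \le c + \frac1m \max_{1 \le j \le m}(d_j - c) =: c - \de,
\]
with $\de > 0$ since each $d_j < c$. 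Outside $\ov B$ we still have $F(y, Dw(y)) \le c$ because each $w_j$ is a global critical subsolution. Therefore $w$ is a critical subsolution on $\R^M$ which is strict (with the uniform slack $\de$) throughout $\ov B \supset B$, as required.

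The only genuine subtlety is making sure that the strict-subsolution inequality passes correctly through the convex combination at points covered by several of the $B_j$'s and, more importantly, that every point of $\ov B$ is covered by \emph{some} $B_j$ — this is exactly why one extracts the finite subcover of the compact set $\ov B$ rather than of $B$ itself. Everything else is a routine repetition of the convexity computation already carried out in Lemma \ref{lemHkdue}; in fact the present statement is strictly simpler, since here there is no need to also incorporate the boundary subsolution $u$ of Proposition \ref{Hkuno} or to worry about coercivity.
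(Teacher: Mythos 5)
Your argument is correct and is exactly the route the paper intends: the paper proves this corollary by ``adapting the same argument of Lemma \ref{lemHkdue}'', i.e.\ negating Proposition \ref{OhAubry} \textbf{(iii)} at each point of $\ov B$, extracting a finite subcover of the compact set $\ov B$, and taking a convex combination of the corresponding global critical subsolutions, using convexity of $F$ in $q$ and the a.e.\ characterization of locally Lipschitz subsolutions. The only cosmetic remark is that no enlargement of $B$ is needed, since $\ov B$ is already compact because $B$ is bounded; and, as you note, here one does not need the extra subsolution covering the complement that Lemma \ref{lemHkdue} required, because strictness is only asked on $B$.
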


\smallskip

We record for later use:

\smallskip

\begin{Proposition}\label{postohAubry} Let $B$, $b$  be an open bounded set of $\R^M$,
and a critical value, respectively. Assume that the equation $F =b$
admits a strict subsolution in $B$, and  denote by $w$ a subsolution
of $F=b$ in $\R^M$. Then the Dirichlet  problem
\[\left \{\begin{array}{cc}
   F(y,Du)=  & b \qquad\hbox{in $B$} \\
    u = & w  \qquad\hbox{on $\partial B$}\\
  \end{array} \right . \]
  admits an unique solution $u$ given by the formula
  \[ u(y)= \inf \{ w(z) + S_b(z,y)  \mid z \in \partial B\}. \]
\end{Proposition}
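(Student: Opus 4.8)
The plan is to exhibit the function $u(y)=\inf\{w(z)+S_b(z,y)\mid z\in\partial B\}$ directly from the metric apparatus of the appendix, check that it solves the Dirichlet problem, and then deduce uniqueness from the comparison principle for the stationary equation $F=b$ on $B$, whose validity rests precisely on the existence of a strict subsolution there.

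For existence I would first record the boundary condition. If $y\in\partial B$, then choosing $z=y$ gives $u(y)\le w(y)+S_b(y,y)=w(y)$, since the constant curve shows $S_b(y,y)=0$; on the other hand $w$ is a subsolution of $F=b$ on $\R^M$, so Proposition \ref{subsubsub} {\bf (i)} gives $w(y)-w(z)\le S_b(z,y)$ for every $z\in\partial B$, whence $u(y)\ge w(y)$. Thus $u=w$ on $\partial B$. Since $\partial B$ is closed and, again by Proposition \ref{subsubsub} {\bf (i)}, the trace $w|_{\partial B}$ satisfies $w(y_2)-w(y_1)\le S_b(y_1,y_2)$ on $\partial B$, I would then apply Proposition \ref{subsubsub} {\bf (iii)} with $C=\partial B$: it asserts exactly that $u$ is a subsolution of $F=b$ on all of $\R^M$, an honest solution on $\R^M\setminus\partial B$ — in particular on the open set $B$ — and equals $w$ on $\partial B$. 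Continuity of $u$ up to $\partial B$ comes for free, since as a subsolution of $F=b$ on $\R^M$ it is locally Lipschitz by Lemma \ref{equili}. Hence $u$ solves the Dirichlet problem and the representation formula holds.

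For uniqueness I would invoke the comparison principle for $F=b$ on the bounded set $B$: a continuous subsolution lying below a continuous supersolution on $\partial B$ lies below it throughout $B$. This is where the hypothesis that $F=b$ admits a strict subsolution in $B$ is essential — for $b>c$ it is automatic (any critical subsolution is strict for $F=b$), while for $b=c$ it is precisely the statement that $B$ meets no point of the Aubry set, cf.\ Proposition \ref{OhAubry} {\bf (iii)} and Corollary \ref{corohAubry} — and under it comparison for the coercive equation $F=b$ on $B$ is classical, see \cite{B}. Then, if $v$ is any continuous solution of the Dirichlet problem, applying comparison to the pairs $(v,u)$ and $(u,v)$, which agree with $w$ on $\partial B$, yields $v\le u$ and $u\le v$, hence $v=u$.

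The step I expect to be the real content, rather than bookkeeping, is uniqueness, and inside it the recognition that the strict subsolution assumption is exactly the hypothesis that makes comparison — equivalently, the absence of Aubry points inside $B$ — available; a second, more technical point worth flagging is that the formula uses the \emph{global} intrinsic distance $S_b$ rather than the one computed with curves confined to $\ov B$, and it is Proposition \ref{subsubsub} {\bf (iii)}, phrased so that the subsolution property of $w$ on $\R^M$ absorbs any excursion of a competing curve outside $\ov B$, that makes this legitimate. A self-contained weak--KAM derivation of uniqueness (bounding $v\le u$ through a localized version of Proposition \ref{subsubsub} {\bf (i)}, and $u\le v$ through a calibrated curve for the supersolution $v$ that must exit $B$ because $F=b$ is strictly subsolvable there) is available but longer, and I would leave its details to \cite{F1}, \cite{FS1}, \cite{FS2}, \cite{F2}.
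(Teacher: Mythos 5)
The paper does not prove this proposition: it is ``recorded'' in the appendix as a known fact of the metric/weak KAM approach, with the references \cite{F1}, \cite{FS1}, \cite{FS2}, \cite{F2}, so there is no in-paper argument to compare yours against. Your proof is correct and is the standard one: Proposition \ref{subsubsub} {\bf (iii)} applied with $C=\partial B$ (the compatibility of the trace of $w$ coming from Proposition \ref{subsubsub} {\bf (i)}) gives at once that the candidate $u$ is a solution in $B$, locally Lipschitz by Lemma \ref{equili}, and equal to $w$ on $\partial B$; uniqueness then reduces to comparison for $F=b$ on the bounded set $B$, which is exactly what the strict-subsolution hypothesis supplies for a convex, coercive Hamiltonian (replace the subsolution by its convex combination with the strict one and run the usual doubling argument, then let the mixing parameter go to $0$), and this is also the way the proposition is used in Proposition \ref{singa}. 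One small caveat: your parenthetical claim that for $b=c$ the hypothesis is ``precisely'' that $B$ meets no Aubry point overstates Proposition \ref{OhAubry} {\bf (iii)}, which concerns functions that are globally critical subsolutions — a locally defined strict subsolution may well exist on a ball intersecting $\A$ (e.g.\ a linear function near a point where $\interior Z_c\neq\emptyset$); what is true and what the paper needs is the implication of Corollary \ref{corohAubry} ($\A=\emptyset$ implies a subsolution strict in $B$). This aside plays no role in your argument, which stands as written.
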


We now consider a supercritical value $b$ and a function $h: \R^M
\to \R$ with
\begin{equation}\label{length0}
h \geq 1 \quad\hbox{in $\R^M$ \, and} \quad h(y) > 1 \Rightarrow
F(y,0)\leq b \, .
\end{equation}
We define for any  curve $\xi$ in $[0,1]$ the length functional
\[\int_0^1 h( \xi) \, \si_b(\xi, \dot\xi) \, ds \]
and denote by $S_b^h$ the corresponding distance obtained as the
infimum of lengths of curves joining two given points of $\R^M$. We
have
\smallskip

\begin{Proposition}\label{length}
Let $b$, $h$  be a supercritical value for $F$ and a function
satisfying \eqref{length0}, respectively, then $S^h_b(z_0,\cdot)$ is
a locally Lipschitz--continuous supersolution to \eqref{eq} in $\R^M
\setminus \{z_0\}$, for any $z_0 \in \R^M$.
\end{Proposition}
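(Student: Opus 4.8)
The plan is to reduce the statement to the metric machinery already developed (Lemma \ref{equili}, Proposition \ref{subsubsub}) applied to the auxiliary Hamiltonian
\[ F^h(y,q) := F\!\left(y,\frac{q}{h(y)}\right), \qquad (y,q)\in\R^M\times\R^M, \]
and then to pass from $F^h$--supersolutions to $F$--supersolutions by a convexity estimate. First I would observe that $F^h$ inherits the three structural hypotheses on $F$: it is continuous (using that $h$ is continuous and positive, as is the case for $h_\la$ in Lemma \ref{constru2015}; see the last paragraph for the general case), convex in $q$ because $q\mapsto q/h(y)$ is linear, and coercive because $h$ is locally bounded. Its sublevels are $Z^h_b(y)=\{q\mid F^h(y,q)\le b\}=h(y)\,Z_b(y)$, and therefore its support function is $\si^h_b(y,v)=h(y)\,\si_b(y,v)$; consequently the intrinsic length associated with $F^h$ is exactly $\int_0^1 h(\xi)\,\si_b(\xi,\dot\xi)\,\dd s$ and the corresponding intrinsic distance is $S^h_b$.

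The crucial consequence of \eqref{length0} is that $Z^h_b(y)\supseteq Z_b(y)$ for every $y$: this is an equality when $h(y)=1$, while if $h(y)>1$ then $F(y,0)\le b$, i.e.\ $0\in Z_b(y)$, and a convex set containing the origin is contained in every dilation of itself by a factor $\ge 1$. In particular every subsolution of $F=b$ is a subsolution of $F^h=b$, so $b$ is supercritical for $F^h$ as well, and Lemma \ref{equili} together with Proposition \ref{subsubsub} apply to $F^h$. Proposition \ref{subsubsub}{\bf (ii)}, used for $F^h$, then gives that $S^h_b(z_0,\cdot)$ is a subsolution of $F^h=b$ on all of $\R^M$ — hence locally Lipschitz--continuous by Lemma \ref{equili} — and a solution, in particular a supersolution, of $F^h=b$ on $\R^M\setminus\{z_0\}$.

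It remains to upgrade this to a supersolution of $F=b$ on $\R^M\setminus\{z_0\}$, for which it suffices to verify the pointwise implication $F^h(y,q)\ge b\ \Rightarrow\ F(y,q)\ge b$. If $h(y)=1$ there is nothing to prove. If $h(y)>1$, set $\la=1/h(y)\in(0,1)$ and use $F(y,0)\le b$ and convexity of $F(y,\cdot)$ to get
\[ b\le F^h(y,q)=F\big(y,\la q+(1-\la)\,0\big)\le \la\,F(y,q)+(1-\la)\,b, \]
whence $F(y,q)\ge b$. Applying this with $q=D\psi(\bar y)$ at any $\bar y\ne z_0$ and any $C^1$ subtangent $\psi$ to $S^h_b(z_0,\cdot)$, the inequality $F^h(\bar y,D\psi(\bar y))\ge b$ furnished by the previous step becomes $F(\bar y,D\psi(\bar y))\ge b$, which is the desired supersolution inequality. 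Note that one cannot expect more than a supersolution here: where $h>1$, the gradient of $S^h_b(z_0,\cdot)$ may lie in $h(y)Z_b(y)\setminus Z_b(y)$, so the subsolution property for $F=b$ fails in general.

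The only delicate point is the regularity assumed on $h$. The reduction above uses continuity of $h$ to invoke the metric theory for $F^h$, which is precisely the setting needed in the paper. If one only wants to assume $h$ locally bounded, the argument must be made directly on $S^h_b$: local Lipschitz continuity follows from the triangle inequality and the straight--line comparison, using the local boundedness of the multifunction $Z_b$; and the supersolution property is checked by the standard dynamic programming argument for distance functions. There one splits according to whether $\interior Z_b(\bar y)$ is empty — in which case $F(\bar y,\cdot)\ge b$ identically and the inequality is free — or nonempty, in which case $Z_b$ is Hausdorff continuous near $\bar y$, an optimal (or nearly optimal) curve issued from $\bar y$ singles out a direction $w$ with $\si_b(\bar y,w)\le D\psi(\bar y)\cdot w$, and, since $\si_b(\bar y,\cdot)>D\psi(\bar y)\cdot(\cdot)$ would follow from $D\psi(\bar y)\in\interior Z_b(\bar y)$, this forces $F(\bar y,D\psi(\bar y))\ge b$. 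Carrying out this hands--on verification cleanly is the main technical obstacle in the general case; in the continuous case the convexity estimate above does all the work.
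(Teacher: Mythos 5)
Your argument is correct and is essentially the paper's own proof: you introduce the auxiliary Hamiltonian $F\left(y,\frac{q}{h(y)}\right)$, identify $S^h_b$ as its intrinsic distance so that Proposition \ref{subsubsub} {\bf (ii)} and Lemma \ref{equili} give the supersolution property for the auxiliary equation and the local Lipschitz continuity, and then pass to $F=b$ via the same convexity estimate using $F(y,0)\leq b$ where $h(y)>1$. The extra observations you add (that $Z^h_b\supseteq Z_b$ makes $b$ supercritical for the auxiliary Hamiltonian, and the remark on continuity of $h$) only make explicit points the paper leaves implicit.
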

\begin{proof} We fix $z_0$. For any $(y,v) \in \R^M \times \R^M$,  $h(y) \,
\si_b(y,v)$ is the support function of the $b$--sublevel of the
Hamiltonian
\begin{equation}\label{length1}
   (y,q) \mapsto F \left ( y, \frac q{h(y)} \right )
\end{equation}
and $S^h_b$ is the corresponding intrinsic distance. According to
Proposition \ref{subsubsub} {\bf (ii)}, $w:= S_b^h(z_0,\cdot)$ is
subsolution to \eqref{eq} in $\R^M$, and supersolution in $\R^M
\setminus \{z_0\}$, with $F$ replaced by the Hamiltonian in
\eqref{length1}. Since the Hamiltonian in \eqref{length1} keeps the
coercivity property of $F$, this implies that $w$ is locally
Lipschitz--continuous in force of Lemma \ref{equili}.

  Taking into
account the supersolution  information on $w$, we consider a
subtangent $\psi$ to $w$ at a point $y$. If  $h(y)=1$ then
\begin{equation}\label{length2}
    F(y,D\psi(y)) = F \left ( y, \frac {D\psi(y)}{h(y)} \right ) \geq
b.
\end{equation}
If instead $h(y) > 1$ then by \eqref{length0} and convex character
of $F$
\begin{eqnarray*}
  F \left ( y, \frac {D\psi(y)}{h(y)} \right ) &=&
 F \left ( y,  \left( 1 - \frac 1{h(y)} \right ) \, 0 +\frac {D\psi(y)}{h(y)} \right ) \\
  &\leq& \frac 1{h(y)} \, F(y,D \psi(y)) + \left( 1 - \frac 1{h(y)} \right
  ) \, b
\end{eqnarray*}
and consequently
\begin{equation}\label{length3}
    \frac 1{h(y)} \, F(y,D \psi(y)) \geq b - \left( 1 - \frac 1{h(y)} \right
  ) \, b = \frac 1{h(y)} \, b.
\end{equation}
 Formulas \eqref{length2}, \eqref{length3}   provide the assertion.
\end{proof}

\bigskip


\begin{thebibliography}{10}

\bibitem{AB1} Olivier Alvarez and Martino Bardi,  \emph{Viscosity Solutions Methods for Singular
Perturbations in Deterministic and Stochastic Control}, SIAM J.
Control Optim {\bf 40} (2001).

\bibitem{AB2} Olivier Alvarez and Martino Bardi, \emph{ Singular Perturbations of
Nonlinear Degenerate Parabolic PDEs: a General Convergence Result},
arch. Rat. Mech. Anal. {\bf 170} (2003)


\bibitem{AB3} Olivier Alvarez and Martino Bardi, \emph{Ergodicity, Stabilization,
and Singular Perturbations for Bellman-Isaacs Equations}, Memoirs
AMS {\bf 204} (2010).

\bibitem{AC} Jean--Pierre Aubin and Arrigo Cellina,
\emph{Differential Inclusions},
 Springer--Verlag, Berlin, 1984.

\bibitem{A1} Zvi Arstein, \emph{On the Value Function of Singularly Perturbed Optimal Control
Systems }, 43rd IEEE Conference on Decision and Control {\bf 41}
(2004).

\bibitem{A2} Zvi Arstein, \emph{Three Lectures on:
Control of Coupled Fast and Slow Dynamics }, SADCO Course 2012,
http://uma.ensta-paristech.fr/itn-sadco/talks/ravello2012/ravello2012artstein.pdf

\bibitem{AG} Zvi Arstein and Vladimir Gaitsgory, \emph{The Value Function
of Singularly Perturbed Control Systems }, Appl Math Optim. {\bf 41}
(2000).

\bibitem{BCD} Martino Bardi and Italo Capuzzo Dolcetta,
\emph{ Optimal Control and Viscosity Solutions of
Hamilton--Jacobi--Bellman equations}, Birkh\"{a}user, Boston, 2008.

\bibitem{B} Guy Barles, \emph{ First Order Hamilton--Jacobi Equations and Applications},
CIME Course 2011, http://php.math.unifi.it/users/cime/Courses/2011/04/201142-Notes.pdf


\bibitem{C} Frank H. Clarke , \emph{ Optimization and Nonsmooth Analysis}, Society for Industrial and
Applied Mathematics, Philadelphia, 1990.

\bibitem{E1} Lawrence C. Evans, \emph{The perturbed test function method for viscosity solutions
of nonlinear PDEs, }, Proc. Soc. Edingburgh {\bf 111} (1989).

\bibitem{E2} Lawrence C. Evans, \emph{Periodic homogenization of certain fully nonlinear partial
differential equations}, Proc. Royal Soc. Edingburgh {\bf 120}
(1992).

\bibitem{F1} Albert  Fathi, \emph{ Weak KAM Theorem in Lagrangian Dynamics}, ghost
book, http://www.math.ens.fr/~baladi/fathidea.pdf

\bibitem{F2} Albert  Fathi, \emph{Weak KAM from a PDE point of
view: viscosity solutions of the Hamilton-Jacobi equation and Aubry
set}, CANPDE Course 2011,
https://www.ceremade.dauphine.fr/~pbernard/enseignement/m2/fathi.pdf

\bibitem{FS1} Albert Fathi and Antonio Siconolfi, \emph{Existence of C1 critical
subsolutions of the Hamilton-Jacobi equation}, Invent. Math., {\bf
155} (2004).

\bibitem{FS2} Albert Fathi and Antonio Siconolfi, \emph{PDE aspects of Aubry-Mather theory for quasiconvex
Hamiltonians, Calc. Var. Partial Differential Equations}, {\bf 22}
(2005).

\bibitem{LPV} Pierre--Louis Lions, Gerge Papanicolau and S. R. Srinivasa Varadhan, \emph{Homogenization of
Hamilton–-Jacobi equations}, unpublished preprint (circa 1986).

\bibitem{XY} Jack Xin and Yifeng Yu, \emph{Periodic Homogenization of Inviscid G-equation
for Incompressible Flows}, Commun. Math. Sci. {\bf 8} (2010) .

\end{thebibliography}
\end{document}